\documentclass[10pt,reqno]{article}

\usepackage{amssymb}
\usepackage{epsfig}
\usepackage{amsmath}
\usepackage{amsthm}
\usepackage{color}
\definecolor{r}{rgb}{0.9,0.3,0.1}
\definecolor{b}{rgb}{0.1,0.3,0.9}

\topmargin -0.25in \textheight 8.5in \flushbottom
\setlength{\textwidth}{6.in} 
\setlength{\oddsidemargin}{.25in} 
\setlength{\evensidemargin}{.25in}

\newtheorem{theorem}{Theorem}[section]
\newtheorem{lemma}[theorem]{Lemma}

\theoremstyle{remark}
\newtheorem{remark}[theorem]{Remark}

\theoremstyle{definition}
\newtheorem{assumption}[theorem]{Assumption}

\newtheorem{definition}[theorem]{Definition}

\newcommand\cbrk{\text{$]$\kern-.15em$]$}}
\newcommand\opar{\text{\,\raise.2ex\hbox{${\scriptstyle
|}$}\kern-.34em$($}}
\newcommand\cpar{\text{$)$\kern-.34em\raise.2ex\hbox{${\scriptstyle |}$}}\,}

\newcommand{\ga}{\gamma}

\newcommand{\om}{\omega}

\newcommand{\Si}{\Sigma}

\newcommand{\de}{\delta}

\newcommand\bL{\mathbb{L}}
\newcommand\bR{\mathbb{R}}
\newcommand\bH{\mathbb{H}}
\newcommand\bZ{\mathbb{Z}}

\newcommand\bE{\mathbb{E}}

\newcommand\cA{\mathcal{A}}
\newcommand\cB{\mathcal{B}}

\newcommand\cF{\mathcal{F}}
\newcommand\cH{\mathcal{H}}
\newcommand\cL{\mathcal{L}}
\newcommand\cP{\mathcal{P}}
\newcommand\cR{\mathcal{R}}

\newcommand\cO{\mathcal{O}}

\newcommand\frH{\mathfrak{H}}

\newcommand{\mysection}[1]{\section{#1}
\setcounter{equation}{0}}

\begin{document}
\setlength{\baselineskip}{16pt}

\title
{A $W^n_2$-Theory of\\
 Stochastic Parabolic Partial Differential Systems\\  on
 $C^1$-domains\\\vspace{0.2cm}
 \Large{(running title: SPDSs on $C^1$-domains)}}

\author{Kyeong-Hun Kim\footnote{Department of
Mathematics, Korea University,  Seoul, South Korea 136-701, \,
kyeonghun@korea.ac.kr. }\quad \hbox{\rm and} \quad Kijung
Lee\footnote{Department of Mathematics, Ajou University, Suwon,
South Korea 443-749, \, kijung@ajou.ac.kr.} }
\date{}


\maketitle

\begin{abstract}
In this article we present a $W^n_2$-theory of stochastic parabolic partial differential systems. In
particular, we focus on non-divergent type. The
space domains we consider are $\bR^d$, $ \bR^d_+$ and  eventually
general bounded $C^1$-domains $\mathcal{O}$. By the nature of
stochastic parabolic equations we need  weighted Sobolev
spaces to prove the existence and the uniqueness. In our choice of
spaces we allow the derivatives of the solution to blow up near the
boundary and moreover the coefficients of the systems are allowed to
oscillate to a great extent or blow up near the boundary.

\vspace*{.125in}

\noindent {\it Keywords: stochastic parabolic partial differential
systems, weighted Sobolev spaces.}

\vspace*{.125in}

\noindent {\it AMS 2000 subject classifications:} primary 60H15,
35R60; secondary 35K45, 35K50.
\end{abstract}



\mysection{Introduction}

In this article we consider the following general stochastic
parabolic partial differential system :
\begin{eqnarray}
du^k&=&(a^{ij}_{kr}u^r_{x^ix^j}+b^i_{kr}u^r_{x^i}+c_{kr}u^r+f^k)dt\nonumber\\
&&+(\sigma^i_{kr,m}u^r_{x^i}+\nu_{kr,m}u^r+g^k_m)dw^m_t,
\quad t>0,\;x\in \cO\subset \mathbb{R}^d\nonumber\\
u^k(0)&=&u^k_0, \label{eqn main system}
\end{eqnarray}
where $i,j=1,2,\ldots,d$ and $k,r=1,2,\ldots,d_1$ and we used the
summation convention on the repeated indices $i,j,r$. The system
(\ref{eqn main system}) models the interactions among $d_1$
diffusive quantities with other physical phenomena like convection,
internal source or sink, and randomness caused by lack of
information. Moreover, the countable sum of the stochastic integrals
against independent one-dimensional Brownian motions
$\{w^m_{\cdot}:m=1,2,\ldots\}$ enables us to include the stochastic
integral against a cylindrical Brownian motion in (\ref{eqn main
system}) (see sec. 8.2 of \cite{Kr99}). The solution
$u=(u_1,u_2,\cdots,u_{d_1})$ not only depends on $t>0$, $x\in \cO$,
but also depends on $\omega$ in a probability space
$(\Omega,\mathcal{F},\{\mathcal{F}_t;t\ge 0\},P)$ on which
$w^m_{\cdot}$ are defined.  The coefficients $a^{ij}_{kr}, b^i_{kr},
c_{kr},\sigma^i_{kr,m},\nu_{kr,m}$ also depend on $(\omega,t,x)$.
The detailed formulation of (\ref{eqn main system}) follows in the
subsequent sections.

The concrete motivations of studying (\ref{eqn main system}) can be
easily found in the literature. If $d_1=1$,  (\ref{eqn main
system}) is a stochastic partial differential equation(SPDE) of
parabolic type. Such equations arise in many applications of
probability theory (see \cite{Kr99} and \cite{R}). For instance, the
conditional density in nonlinear filtering problems for a partially
observable diffusion process obeys a SPDE and the density of a
super-diffusion process also satisfies a SPDE when the dimension of
the space domain is 1. If $d_1=3$, the motion of a random string can
be modeled by a stochastic parabolic partial differential system
(see \cite{Fu} and \cite{MT}).

General $L_p$-theory with $p\ge 2$ for stochastic parabolic
\emph{equations} (not systems) has been well studied. An
$L_p$-theory of SPDEs with space domain $\bR^n$ was first introduced
by Krylov in \cite{Kr99} (cf. \cite{kr94} for $L_2$-theory), and
since then the results were extended for SPDEs
 defined on  arbitrary $C^1$ domains $\cO$ in $\bR^d$ by Krylov, his
collaborates and many other mathematicians (see, for instance,
\cite{KL1}, \cite{KL2}, \cite{KK}, \cite{Kim03},  \cite{Lo} and
references therein). On the contrary $L_p$-theory of general systems
of type (\ref{eqn main system}) is not available in the literature
except $L_p$-theory of the system with the Laplace operator (see,
for instance, \cite{MR}, \cite{MR04} and the reference therein).

Our goal in this article is to prove unique solvability of the
systems of type (\ref{eqn main system}) in  Sobolev spaces with
weights.
It is known that unless certain compatibility conditions (see, for
instance, \cite{F}) are fulfilled, the second and higher derivatives
of  solutions  blow up near the boundary (see \cite{kr94}). Hence,
we measure this blow-up by using appropriate weights. By the way,
the H\"older space approach does not allow one to obtain results of
reasonable generality (see \cite{KL2} for details).


We extend  the results for single equations in \cite{Kim03},
\cite{KK2}, \cite{Kr99}, \cite{KL1}, and \cite{KL2} to the case of
the systems under the algebraic condition (\ref{assumption 1}) for
the the leading coefficients $a^{ij}_{kr}, \sigma^i_{kr,m}$ and
very minimal smoothness conditions for the coefficients. Under these
assumptions $a^{ij}_{kr}, \sigma^i_{kr,m}$ are allowed to oscillate
to a great extent near the boundary, and $b^i_{kr},c_{kr},
\nu_{kr,m}$ may blow up fast near the boundary. For instance, for
the case $d=d_1=1$ with the space domain $\mathbb{R}_+$ we allow
$a:=a^{11}_{11}$ to behave like $2+\cos|\ln x|^{\alpha}$ near $x=0$,
where $\alpha\in (0,1)$ (see Remark \ref{05.18.01}).
 In this case
the oscillation  of $a(t,x)$ increases to infinity as $x$
approaches the boundary.

For the stability of the numerical solution of (\ref{eqn main
system}),  $W^1_2$-theory may be enough in most cases. But, we are
interested in the regularity of the solutions and we are aiming at
$W^n_p$ theory. \emph{However}, unlike the results for single
equations in \cite{Kim03}, \cite{KK2}, \cite{Kr99}, and \cite{KL2},
we were able to obtain only $W^n_2$-estimates instead of
$W^n_p$-estimates due to many technical difficulties at this point.
For instance, the proofs of  Lemma \ref{a priori 1} and Lemma \ref{a
priori 2} below are not working for $p>2$. Nevertheless, we believe
that $W^n_2$-theory of the system is a main basis for $W^n_p$-theory.
The evidences are the results for single equations. For instance, in
\cite{Kr08} $W^n_p$-theory is established based on
Hardy-Littlewood(HL) theorem, Fefferman-Stein(FS) theorem,
\emph{and} $W^n_2$-theory. In the future we plan to to develop
$W^n_p$-theory of the system (\ref{eqn main system}) by constructing
weighted version of HL and FS theorems and using the result in this
article.

The organization of this article is as follows. Section \ref{Cauchy}
handles the Cauchy problem.  In section  \ref{section half} we prove
the result with space domain $\bR^d_+$ and in section \ref{main
section} we finally prove the results on any bounded $C^1$-domains.

In this article $\bR^{d}$ stands for the Euclidean space of points
\,
$x=(x^{1},...,x^{d})$, \, $\bR^d_+=\{x\in \bR^d:x^1>0\}$ and
$B_r(x):=\{y\in \bR^d:|x-y|<r\}$. For a function $u(x)$ we denote
$$
u_{x^{i}}=\frac{\partial u}{\partial x^{i}}=D_{i}u,\quad
D^{\beta}u=D_{1}^{\beta_{1}}\cdot...\cdot D^{\beta_{d}}_{d}u,
\quad|\beta|=\beta_{1}+...+\beta_{d}
$$
for the multi-indices $\beta=(\beta_{1},...,\beta_{d})$,
$\beta_{i}\in\{0,1,2,...\}$.
 By $c=c(\cdots)$ or $N=N(\cdots)$  we mean that the constant
$c$ or $N$ depends only on what are in parenthesis. Throughout the
article, for functions depending on $\omega,t,x$, the argument
$\omega \in \Omega$ will be omitted.

\mysection{The system with the space domain
$\cO=\bR^d$}\label{Cauchy}

In this section we develop a $W^n_2$-theory of the Cauchy problem
with the system (\ref{eqn main system}). For this we don't need
weights yet since we don't have a boundary.

Let $(\Omega,\mathcal{F},P)$ be a complete probability space and
$\{\mathcal{F}_t:t\ge 0\}$ be a filtration such that $\mathcal{F}_0$
 contains all $P $-null sets of $\Omega $.
By  $\mathcal{P}$ we denote the predictable $\sigma$-algebra on
$\Omega\times(0,\infty)$. Let $\{w^{m}_{t}\}_{m=1}^{\infty}$ be
independent one-dimensional $\{\mathcal{F}_t\}$-adapted Wiener
processes defined on $(\Omega,\mathcal{F},P)$ and
$C^{\infty}_0:=C^{\infty}_0(\mathbb{R}^d;\mathbb{R}^{d_1})$ denote
the set of all $\mathbb{R}^{d_1}$-valued infinitely differentiable
functions with compact support in $\mathbb{R}^d$. By $\mathcal{D}$
we denote the space of $\mathbb{R}^{d}$-valued distributions on
$C^{\infty}_0$; precisely, for $u\in \mathcal{D}$ and $\phi\in
C^{\infty}_0$ we define $(u,\phi)\in \mathbb{R}^{d}$ with components
$(u,\phi)^k=(u^k,\phi^k)$, $k=1,\ldots,d_1$. Here, each $u^k$ is a
usual $\mathbb{R}$-valued distribution defined on
$C^{\infty}(\mathbb{R}^d;\mathbb{R})$.

We define $L_p=L_p(\mathbb{R}^d;\mathbb{R}^{d_1})$ as the space of
all $\mathbb{R}^{d_1}$-valued functions $u=(u^1,\ldots,u^{d_1})$
satisfying
\[
\|u\|^p_{L_p}:=\sum^{d_1}_{k=1}\|u^k\|^p_{L_p(\bR^d)}<\infty.
\]
Let $p \in[2,\infty)$ and $\gamma\in(-\infty,\infty)$. We define the
space of Bessel potential
$H^{\gamma}_p=H^{\gamma}_p(\mathbb{R}^d;\mathbb{R}^{d_1})$ as the
space of all distributions $u$ such that $(1-\Delta)^{\gamma/2}u\in
L_p$, where we define each component of it by
\[
((1-\Delta)^{\gamma/2}u)^k=(1-\Delta)^{\ga/2}u^k
\]
and the operator $(1-\Delta)^{\gamma/2}$ is defined by
\[
(1-\Delta)^{\gamma/2}f=\textrm{ the inverse Fourier transform of
}(1+|\xi|^2)^{\gamma/2}\mathcal{F}(f)(\xi)
\]
with $\cF(f)$ the Fourier transform of $f$. The norm is given by
\[
\|u\|_{H^{\gamma}_p}:=\|(1-\Delta)^{\ga/2}u\|_{L_p}.
\]
Then, $H^{\gamma}_p$ equipped with the given norm is a Banach space
and $C^{\infty}_0$ is dense in $H^{\gamma}_p$ (see \cite{T}).
For non-negative integer $\gamma=0,1,2,\cdots$, it turns out that
$$
H^{\gamma}_p=W^{\gamma}_p:=\{u: D^{\alpha}u\in L_p, \forall \alpha,
|\alpha|\leq \gamma\}.
$$
  It is well known that the first order
differentiation operators,
$\partial_i:H^{\gamma}_{p}(\mathbb{R}^d;\mathbb{R})\to
H^{\gamma-1}_p(\mathbb{R}^d;\mathbb{R})$ given by $u\to u_{x^i}$
$(i=1,2,\ldots,d)$, are bounded. On the other hand, for $u\in
H^{\gamma}_{p}(\mathbb{R}^d;\mathbb{R})$, if $\text{supp}\, (u)
\subset (a,b)\times \mathbb{R}^{d-1}$ with $-\infty<a<b<\infty$, we
have
\begin{equation}
                                        \label{eqn 5.1.1}
\|u\|_{H^{\gamma}_{p}(\mathbb{R}^d;\mathbb{R})}\leq
c(d,\gamma,a,b)\|u_{x}\|_{H^{\gamma-1}_{p}(\mathbb{R}^d;\mathbb{R})}
\end{equation}
(see, for instance, Remark 1.13 in \cite{kr99}).

By $\ell_2$ we denote the set of all real-valued sequences
$e=(e_1,e_2,\ldots)$ with the inner product
$(e,f)_{\ell_2}=\sum_{m=1}^{\infty}e_mf_m$ and the norm
$|e|_{\ell_2}:=(e,e)^{1/2}_{\ell_2}$. If
$g=(g^1,g^2,\cdots,g^{d_1})$ and each $g^k$ is an
 $\ell_2$-valued function,
 then we define
$$
\|g\|^p_{H^{\gamma}_p(\ell_2)}:=\sum_{k=1}^{d_1}\|\;\;|(1-\Delta)^{\ga/2}g^k|_{\ell_2}\;\|^p_{L_p}.
$$
For a fixed time $T<\infty$, we define the stochastic Banach spaces
$$
\bH^{\ga}_p(T)=\bH^{\ga}_p(\bR^d,T):=L_p(\Omega\times(0,T],
\mathcal{P},H^{\ga}_p), \quad
\bH^{\ga}_p(T,\ell_2):=L_p(\Omega\times(0,T],
\mathcal{P},H^{\ga}_p(\ell_2)\;),
$$
$$
\mathbb{L}_p(T):=\bH^0_p(T),\quad
\mathbb{L}_p(T,\ell_2)=\bH^0_p(T,\ell_2)
$$
with the norms given by
\[
\|u\|^p_{\mathbb{H}^{\ga}_p(T)}=\bE\int^{T}_0\|u(t)\|^p_{H^{\gamma}_p}dt,\quad
\|g\|^p_{\mathbb{H}^{\ga}_p(T,\ell_2)}=\bE\int^{T}_0\|g(t)\|^p_{H^{\gamma}_p(\ell_2)}dt.
\]
Finally, we set
$U^{\gamma}_p:=L_p(\Omega,\mathcal{F}_0,H^{\gamma-2/p}_p)$ for the
initial data of the Cauchy problem. The Banach space
$\mathcal{H}^{\ga+2}_p(T)$ below is modified from
$\mathbb{R}$-valued version in \cite{Kr99} to the
$\mathbb{R}^{d_1}$-valued version.

\begin{definition}\label{md}
For a $\mathcal{D}$-valued function
$u=(u^1,\cdots,u^{d_1})\in\mathbb{H}^{\ga+2}_p(T)$, we write
$u\in\mathcal{H}^{\ga+2}_p(T)$ if $u(0,\cdot)\in U^{\gamma+2}_p$,
and there exist $f\in \mathbb{H}^{\ga}_p(T),\;g\in
\mathbb{H}^{\ga+1}_p(T,\ell_2)$ such that, for any $\phi\in
C^{\infty}_0$, (a.s.)  the equality
\begin{equation}\label{e}
(u^k(t,\cdot),\phi)= (u^k(0,\cdot),\phi)+ \int^t_0(
f^k(s,\cdot),\phi)ds+
\sum_{m=1}^{\infty}\int^t_0(g^k_{m}(s,\cdot),\phi)dw^{m}_s
\end{equation}
holds for each $k=1,\cdots,d_1$ and $t\in(0,T]$.  The norm of $u$ in
$\cH^{\gamma+2}_{p}(T)$ is defined by
\[
\|u\|_{\mathcal{H}^{\ga+2}_p(T)}= \|u\|_{\mathbb{H}^{\ga+2}_p(T)}+\|
f\|_{\mathbb{H}^{\ga}_p(T)}+\|g\|_{\mathbb{H}^{\ga+1}_p(T,\ell_2)}+
\|u(0,\cdot)\|_{U^{\ga+2}_p}.
\]
We  write (\ref{e}) in the following simplified ways,
\[
u(t)=u(0)+\int^t_0f(s)ds+ \int^t_0g_m(s)dw^{m}_s\;\textrm{ or  }\;
du=fdt+g_{m}dw^{m}_t,\quad t\in (0,T]
\]
and we say that $du=fdt+g_{m}dw^{m}_t$ holds \emph{in the sense of
distributions}.
\end{definition}


For any $m\times n$ real-valued matrix $C=(c_{kr})$, we define its
norm by
$$
|C|:=\sqrt{\sum_{k=1}^m\sum_{r=1}^n(c_{kr})^2}.
$$
We set $A^{ij}=(a^{ij}_{kr})$, $\Sigma^i=(\sigma^i_{kr})$, and
$\mathcal{A}^{ij}=(\alpha^{ij}_{kr})$, where
$$
\alpha^{ij}_{kr}=\frac{1}{2}\sum_{l=1}^{d_1}
(\sigma^i_{lk},\sigma^j_{lr})_{\ell_2},\quad
\sigma^i_{kr}=(\sigma^i_{kr,1},\sigma^i_{kr,2},\cdots).
$$
 Throughout the
article we assume the followings.

\begin{assumption}
                     \label{main assumptions}
(i) The coefficients $a^{ij}_{kr}, b^i_{kr}, c_{kr},
\sigma^i_{kr,m}$, and $\nu_{kr,m}$ are $\cP \otimes
\cB(\bR^d)$-measurable.

(ii) There exist finite constants $\delta, K^j, L>0$ so that
\begin{equation}\label{assumption 1}
\de|\xi|^2\le\xi^*_{i}\left(A^{ij}-\mathcal{A}^{ij}\right)\xi_{j},
\end{equation}
\begin{equation}\label{assumption 2}
\left|A^{1j}\right|\le K^j,  \quad |\mathcal{A}^{ij}|\le L, \quad
i,j=1,2,\ldots,d
\end{equation}
hold for any $\omega\in\Omega,\ t\ge 0,\ x\in \bR^d$, where $\xi$ is
any (real) $d_1\times d$ matrix, $\xi_i$ is the $i$th column of
$\xi$, $*$ denotes the matrix transpose, and again the summations on $i,j$ are understood.

\end{assumption}

Before we consider the general system (\ref{eqn main system}), we
give a $W^n_2$-theory for the Cauchy problem with the coefficients
independent of $x$:
\begin{equation}
                       \label{eqn system}
du^k=(a^{ij}_{kr}u^r_{x^ix^j}+f^k)dt+(\sigma^{i}_{kr,m}u^r_{x^i}+g^{k}_{m})dw^{m}_t,
\quad u^k(0,\cdot)=u^k_0(\cdot),
\end{equation}
where $i,j=1,2,\cdots,d$,\; $k,r=1,2,\cdots,d_1, \;m=1,2,\ldots$;
recall that we are using summation notation on $i,j,r$.

\begin{theorem}
                      \label{thm 1}
Let $a^{ij}_{kr}=a^{ij}_{kr}(\omega,t)$ and
$\sigma^i_{kr,m}=\sigma^i_{kr,m}(\omega,t)$. Then for any $f\in
\bH^{\gamma}_2(T)$, $g\in \bH^{\gamma+1}_2(T,\ell_2)$, and $u_0\in
U^{\gamma+2}_2$, the problem (\ref{eqn system}) has a unique
solution $u\in \mathcal{H}^{\gamma+2}_2(T)$ and for this solution we
have
\begin{equation}
                        \label{e 6.5.2}
\|u_{xx}\|_{\bH^{\gamma}_{2}(T)}\leq
c\left(\|f\|_{\bH^{\gamma}_2(T)}+\|g\|_{\bH^{\gamma+1}_2(T,\ell_2)}+\|u_0\|_{U^{\gamma+2}_2}\right),
\end{equation}
\begin{equation}
                        \label{e 6.5.3}
\|u\|_{\bH^{\gamma+2}_{2}(T)}\leq
ce^{cT}\left(\|f\|_{\bH^{\gamma}_2(T)}+\|g\|_{\bH^{\gamma+1}_2(T,\ell_2)}+\|u_0\|_{U^{\gamma+2}_2}\right),
\end{equation}
where $c=c(d,d_1,\gamma,\delta,K^j,L)$.
\end{theorem}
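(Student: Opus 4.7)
The plan is to establish the a priori estimates \eqref{e 6.5.2}–\eqref{e 6.5.3} first, from which uniqueness in $\cH^{\gamma+2}_{2}(T)$ is immediate, and then to prove existence by the method of continuity. Since the coefficients $a^{ij}_{kr}$ and $\sigma^i_{kr,m}$ do not depend on $x$, the Bessel potential $(1-\Delta)^{\gamma/2}$ commutes with every operator in \eqref{eqn system}. Setting $v=(1-\Delta)^{\gamma/2}u$, it therefore suffices to prove the estimates for $v$ with data $(1-\Delta)^{\gamma/2}f$, $(1-\Delta)^{\gamma/2}g$, $(1-\Delta)^{\gamma/2}u_0$, which reduces everything to the case $\gamma=0$.

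For the a priori estimate when $\gamma=0$, I would apply the Hilbert-space It\^o formula of \cite{Kr99} to $\|u(t)\|_{L_2}^2=\sum_k\|u^k(t)\|_{L_2(\bR^d;\bR)}^2$. Two contributions combine to give the drift: integration by parts in the deterministic term (legitimate because $a^{ij}_{kr}$ is $x$-independent) yields $-2a^{ij}_{kr}\int u^k_{x^i}u^r_{x^j}\,dx$, while the quadratic variation of the martingale part yields $+2\alpha^{ij}_{kr}\int u^k_{x^i}u^r_{x^j}\,dx$. These combine into exactly $-2\int(a^{ij}_{kr}-\alpha^{ij}_{kr})u^k_{x^i}u^r_{x^j}\,dx$, which the stochastic parabolicity assumption \eqref{assumption 1} bounds above by $-2\delta\|u_x\|_{L_2}^2$. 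Applying Young's inequality to the cross terms $(u,f)$ and $(\sigma u_x,g)$ — the latter absorbing half of the $\delta$ via the bound on $|\sigma|$ obtained from $|\mathcal{A}^{ij}|\leq L$ — and then Gronwall yields \eqref{e 6.5.3}. To get the sharper estimate \eqref{e 6.5.2} without the exponential factor, I would run the same energy argument on the equation satisfied by $u_x$, rewrite the source pairing as $(u_x,f_x)=-(u_{xx},f)$ via integration by parts, and absorb all source terms directly into $\delta\|u_{xx}\|_{L_2}^2$ so that no lower-order term survives.

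For existence, I would run the method of continuity on the family
\[
du^k=\bigl[(1-\lambda)\Delta u^k+\lambda a^{ij}_{kr}u^r_{x^i x^j}+f^k\bigr]\,dt+\bigl[\lambda\sigma^i_{kr,m}u^r_{x^i}+g^k_m\bigr]\,dw^m_t,\qquad\lambda\in[0,1].
\]
A routine check (using that $\mathcal{A}^{ij}$ is positive semidefinite, as it is built from $\sigma\sigma^\ast$) shows that $(1-\lambda)\delta^{ij}\delta_{kr}+\lambda a^{ij}_{kr}$ together with $\lambda\sigma^i_{kr,m}$ still satisfies Assumption \ref{main assumptions} with constants independent of $\lambda$, so the a priori estimates of the previous paragraph hold uniformly along the path. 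At $\lambda=0$ the system decouples into $d_1$ independent scalar Cauchy problems $du^k=\Delta u^k\,dt+g^k_m\,dw^m_t$, which are solvable in $\cH^{\gamma+2}_2(T)$ by the scalar results of \cite{Kr99} (or \cite{kr94} for $p=2$). The standard continuation argument then transports the solution from $\lambda=0$ to $\lambda=1$.

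The main obstacle is the exact bookkeeping in the energy identity: the coefficient in front of $|\nabla u|^2$ is only positive after the martingale contribution $\mathcal{A}^{ij}$ is subtracted from $A^{ij}$, which is precisely why the parabolicity takes the borderline form \eqref{assumption 1} and why the $L_2$ setting is natural — the It\^o-driven cancellation has no clean $L_p$ analog for $p>2$, matching the authors' remark that only $W^n_2$-theory is reachable here. A secondary technical point is the rigorous justification of the It\^o formula for $\|u\|_{L_2}^2$ when $u\in\cH^2_2(T)$; this is handled by mollification in $x$ and passage to the limit, as in the scalar treatment of \cite{Kr99}.
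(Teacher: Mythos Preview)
Your proposal is correct and follows essentially the same route as the paper: reduce to $\gamma=0$ via the Bessel potential isometry, derive the energy identity from It\^o's formula and the stochastic parabolicity \eqref{assumption 1}, run the same identity on $u_x$ for the second-derivative bound, and close existence by the method of continuity from the decoupled Laplacian system. The only slip is expositional: Gronwall applied to the $\|u\|_{L_2}^2$ identity alone controls only $u$ and $u_x$, not the full $H^2$ norm in \eqref{e 6.5.3}; the paper first obtains \eqref{e 6.5.2} from the $u_x$-identity and then combines both estimates with Gronwall to reach \eqref{e 6.5.3}.
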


\begin{proof}
 Let $\Delta$ denote the usual
Laplace operator. By Theorem 4.10 and Theorem 5.1 in \cite{Kr99},
for each $k$, the single equation
$$
du^k=(\delta \Delta u^k+f^k)dt+g^k_mdw^m_t, \quad u^k(0)=u^k_0
$$
has a solution $u^k\in \mathcal{H}^{\gamma+2}_{2}(T)$. For
$\lambda\in [0,1]$ and $d_1\times d_1$ identity matrix $I$ we define
\begin{eqnarray}
\bar{A}^{ij}_{\lambda}=(\bar{a}^{ij}_{kr,\lambda})
&:=&(1-\lambda)\left(A^{ij}-\mathcal{A}^{ij}\right)+\delta^{ij}\lambda
\delta I\nonumber\\
&=& \left((1-\lambda)A^{ij}+\delta^{ij}\lambda\delta
I\right)-(1-\lambda)\mathcal{A}^{ij}=A^{ij}_{\lambda}-\mathcal{A}^{ij}_{\lambda},\nonumber
\end{eqnarray}
where $A^{ij}_{\lambda}:=(1-\lambda)A^{ij}+\delta^{ij}\lambda\delta
I,\; \mathcal{A}^{ij}_{\lambda}:=(1-\lambda)\mathcal{A}^{ij}$. Then
\begin{eqnarray}
|A^{ij}_{\lambda}|\le |A^{ij}|,\quad |\mathcal{A}^{ij}_{\lambda}|\le
|\mathcal{A}^{ij}|,\quad \delta|\xi|^2\leq
\sum_{i,j}\xi^*_i\bar{A}^{ij}_{\lambda}\xi_j\nonumber
\end{eqnarray}
for any $d_1\times d$-matrix $\xi$. Thus, having the method of
continuity in mind (see the proof of Theorem 5.1 in \cite{Kr99} for
the details), we only prove that the a priori estimates (\ref{e
6.5.3}) and (\ref{e 6.5.2}) hold given that a solution $u$ already
exists.

 {\bf{Step 1}}.
Assume $\gamma=0$. Applying the stochastic product rule
$d|u^k|^2=2u^kdu^k+du^kdu^k$ for each $k$, we have
\begin{eqnarray}
|u^k(t)|^2&=&|u^k_0|^2+\int^t_0
\left[2u^k(a^{ij}_{kr}u^r_{x^ix^j}+f^k)+|\sigma^i_{kr}u^r_{x^i}+g^{k}|_{\ell_2}^2\right]ds\nonumber\\
&&+\int^t_0 2u^k(\sigma^i_{kr,m}u^r_{x^i}+g^{k}_m)dw^{m}_s,\quad
t>0.\label{square}
\end{eqnarray}
Making the summation on $r,i$ appeared, we note that
\begin{eqnarray*}
\sum_k\left|\sum_{r,i}\sigma^i_{kr}u^r_{x^i}+g^{k}\right|_{\ell_2}^2&=&
\sum_k\left[\left|\sum_{r,i}\sigma^i_{kr}u^r_{x^i}\right|_{\ell_2}^2+2(\sum_{r,i}\sigma^i_{kr}u^r_{x^i},g^k)_{\ell_2}+|g^k|^2_{\ell_2}\right]\\
&=&2\sum_{i,j}(u_{x^i})^*\mathcal{A}^{ij}u_{x^j}+2\sum_{k,r,i}(\sigma^i_{kr}u^r_{x^i},g^k)_{\ell_2}+\sum_k|g^k|^2_{\ell_2}.
\end{eqnarray*}
By taking expectation, integrating with respect to $x$, and using
integrating by parts  in order, we get from  (\ref{square})
\begin{eqnarray}
&&\bE\int_{\bR^d}|u(t)|^2dx+2\;\bE\int^t_0\int_{\bR^d}\sum_{i,j}(u_{x^i})^*(A^{ij}-\mathcal{A}^{ij})u_{x^j}dxds\nonumber\\
&=&\bE\int_{\bR^d}|u_0|^2dx
+\bE\int^t_0\int_{\bR^d}\left[2u^*f+2\sum_{k,r,i}(\sigma^i_{kr}u^r_{x^i},g^k)_{\ell_2}+\sum_k|g^{k}|_{\ell_2}^2\right]dxds.
\label{eqn 7.8}
\end{eqnarray}
Note that
\begin{eqnarray}
2\left|\sum_{k,r,i}(\sigma^i_{kr}u^r_{x^i},g^k)_{\ell_2}\right|&\le&
2\sum_{k}\big|\sum_{r,i}\sigma^i_{kr}u^r_{x^i}\big|_{\ell_2}\left|g^k\right|_{\ell_2}\nonumber\\
&\le&\sum_{k}\left(\frac{\varepsilon}{2}
\big|\sum_{r,i}\sigma^i_{kr}u^r_{x^i}\big|_{\ell_2}^2+\frac{2}{\varepsilon}\left|g^k\right|_{\ell_2}^2\right)\nonumber\\
&\le& \frac{\varepsilon}{2}
|u_x|^2\sum_{k,r,i}\big|\sigma^i_{kr}\big|_{\ell_2}^2+\frac{2}{\varepsilon}\sum_k\left|g^k\right|_{\ell_2}^2\nonumber\\
&=& \varepsilon
|u_x|^2\sum_{r,i}\big|\alpha^{ii}_{rr}\big|^2+\frac{2}{\varepsilon}\sum_k\left|g^k\right|_{\ell_2}^2\nonumber\\
\label{2009.06.10 04:25 PM}
\end{eqnarray}
for any $\varepsilon>0$. Hence, it  follows that
\begin{eqnarray}
&&\bE\int_{\bR^d}|u(t)|^2dx+2\delta\;
\bE\int^t_0\int_{\bR^d}|u_x|^2dxds\nonumber\\
&\leq&\bE\int_{\bR^d}|u_{0}|^2dx +\varepsilon\cdot d\cdot
L^2\;\bE\int^t_0\int_{\bR^d}|u_x|^2dxds+ \bE\int^t_0\int_{\bR^d}|u|^2dxds\nonumber\\
&&+\bE\int^t_0\int_{\bR^d}|f|^2dxds+c\;\bE\sum_k\int^t_0\int_{\bR^d}|g^{k}|_{\ell_2}^2dxds.\label{eqn
6.5.5}
\end{eqnarray}
Similarly, for $v=u_{x^n}$ with any $n=1,2,\ldots,d$, we get (see
(\ref{eqn 7.8}))
\begin{eqnarray}
&&\bE\int_{\bR^d}|v(t)|^2dx+2\delta \bE\int^t_0\int_{\bR^d}|v_x|^2dxds\nonumber\\
&=&\bE\int_{\bR^d}|(u_0)_{x^n}|^2dx
+\bE\int^t_0\int_{\bR^d}\left[-2v_{x^n}^*f+2\sum_{k,r,i}(\sigma^i_{kr}v^r_{x^i},g^k_{x^n})_{\ell_2}+\sum_k|g^{k}_{x^n}|_{\ell_2}^2\right]dxds.\nonumber\\
&\leq& \|u_0\|^2_{U^2_2}+\varepsilon
\|u_{xx}\|^2_{\bL_2(t)}+c\|f\|^2_{\bL_2(t)}+c\|g_x\|^2_{\bL_2(t,\ell_2)}.\label{v_x}
\end{eqnarray}
Choosing small $\varepsilon$ and considering all $n$, we have
(\ref{e 6.5.2}). Now, (\ref{v_x}), (\ref{eqn 6.5.5}) and Gronwall's
inequality easily lead to (\ref{e 6.5.3}).

{\bf{Step 2}}. Let $\gamma\neq 0$. The result of this case easily
 follows from the fact that
$(1-\Delta)^{\mu/2}:H^{\gamma}_p\to H^{\gamma-\mu}_p$ is an isometry
for any $\gamma,\mu\in \bR$ when $p\in (1,\infty)$; indeed, $u\in
\cH^{\gamma+2}_2(T)$ is a solution of (\ref{eqn system}) if and only
if $v:=(1-\Delta)^{\gamma/2}u\in \cH^2_2(T)$ is a solution of
(\ref{eqn system}) with $(1-\Delta)^{\gamma/2}f,
(1-\Delta)^{\gamma/2}g, (1-\Delta)^{\gamma/2}u_0$ in places of $f,
g, u_0$ respectively. Moreover, for instance, we have
$$
\|u\|_{\bH^{\gamma+2}_{2}(T)}=\|v\|_{\bH^{2}_{2}(T)}\leq
ce^{cT}\left(\|(1-\Delta)^{\gamma/2}f\|_{\bL_2(T)}+\|(1-\Delta)^{\gamma/2}g\|_{\bH^1_2(T,\ell_2)}
+\|(1-\Delta)^{\gamma/2}u_0\|_{U^2_2}\right)
$$
$$
=ce^{cT}\left(\|f\|_{\bH^{\gamma}_2(T)}+\|g\|_{\bH^{\gamma+1}_2(T,\ell_2)}+\|u_0\|_{U^{\gamma+2}_2}\right).
$$
The theorem is proved.

\end{proof}

Now we extend Theorem \ref{thm 1}  to the case of the Cauchy problem
with variable coefficients. Fix $\varepsilon_0>0$. For $\gamma \in
\bR$ let us define
 $|\gamma|_+=|\gamma|$ if  $|\gamma|=0,1,2,\cdots$ and
 $|\gamma|_+=|\gamma|+\varepsilon_0$ otherwise. Then we define
 $$
 B^{|\gamma|_+}=\begin{cases} B(\bR^d) &: \quad  \gamma=0\\
 C^{|\gamma|-1,1}(\bR^d) &: \quad |\gamma|=1,2,... \\
C^{|\gamma|+\kappa}(\bR^d) &: \quad \text{otherwise},
 \end{cases}
$$
where $B$ is the space of bounded functions,  and $C^{|\gamma|-1,1}$
and $C^{|\gamma|+\kappa}$ are  the usual H\"older spaces.  The
Banach space $B^{|\gamma|_+}$ is also defined for $\ell_2$-valued
functions. For instance,
 if $g=(g_1,g_2,...)$, then $|g|_{B^0}=\sup_x|g(x)|_{\ell_2}$ and
$$
|g|_{C^{n-1,1}}=\sum_{|\alpha|\leq
n-1}|D^{\alpha}g|_{B^0}+\sum_{|\alpha|=n-1}\sup_{x\neq
y}\frac{|D^{\alpha}g(x)-D^{\alpha}g(y)|_{\ell_2}}{|x-y|}.
$$

Here is the main result of this section.
\begin{theorem}
                      \label{thm 2}
Assume that the coefficients $a^{ij}_{kr}, \sigma^i_{kr}$ are
uniformly continuous in $x$, that is, for any $\varepsilon>0$ there
exists $\delta=\delta(\varepsilon)>0$ so that for any $\om$, $t>0$,
$i,j,k,r$,
$$
|a^{ij}_{kr}(\om,t,x)-a^{ij}_{kr}(\om,t,y)|+|\sigma^i_{kr}(\omega,t,x)-\sigma^i_{kr}(\omega,t,y)|_{\ell_2}<\varepsilon,
\quad \text{if}\quad |x-y|<\delta.
$$
Also, assume for any $\om$, $t>0$, $i,j,k,r$,
$$
|a^{ij}_{kr}(\om,t,\cdot)|_{|\gamma|_+}+|b^i_{kr}(\om,t,\cdot)|_{|\gamma|_+}
+|c_{kr}(\om,t,\cdot)|_{|\gamma|_+}
+|\sigma^i_{kr}(\om,t,\cdot)|_{|\gamma+1|_+}
+|\nu_{kr}(\om,t,\cdot)|_{|\gamma+1|_+}  <L.
$$
Then for any $f\in \bH^{\gamma}_2(T)$, $g\in
\bH^{\gamma+1}_2(T,\ell_2)$ and $u_0\in U^{\gamma+2}_2$, the Cauchy
problem (\ref{eqn main system}) has a unique solution $u\in
\mathcal{H}^{\gamma+2}_2(T)$, and for this solution we have
$$
\|u\|_{\bH^{\gamma+2}_{2}(T)}\leq
c\left(\|f\|_{\bH^{\gamma}_2(T)}+\|g\|_{\bH^{\gamma+1}_2(T,\ell_2)}+\|u_0\|_{U^{\gamma+2}_2}\right),
$$
where $c=c(d,d_1,\gamma,\delta,K^j,L,T)$.
\end{theorem}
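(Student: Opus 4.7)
The plan is to reduce the variable-coefficient case to Theorem \ref{thm 1} by freezing the leading coefficients on small balls using a partition of unity scaled to the modulus of uniform continuity, and then closing the resulting estimate via the $L_2$ energy method plus Gronwall. As in Theorem \ref{thm 1}, the method of continuity (connecting the system to the model $du^k=\delta\Delta u^k\,dt+\cdots$ already solvable) reduces existence and uniqueness to the a priori bound, so I focus on that. I would first reduce to $\gamma=0$ by applying the isometry $(1-\Delta)^{\gamma/2}:H^{\gamma+2}_2\to H^2_2$: the commutators $[(1-\Delta)^{\gamma/2},a^{ij}_{kr}]u^r_{x^ix^j}$ and $[(1-\Delta)^{\gamma/2},\sigma^i_{kr,m}]u^r_{x^i}$ produced this way are of lower order and can be absorbed into the inhomogeneous data thanks to the bounds on $|a^{ij}_{kr}|_{|\gamma|_+}$ and $|\sigma^i_{kr}|_{|\gamma+1|_+}$ (equivalently, for integer $\gamma$ one just differentiates the system $\gamma$ times).

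With $\gamma=0$ fixed, I would pick a small $\varepsilon>0$ and use uniform continuity to choose $r_0=r_0(\varepsilon)>0$ so that the oscillations of $a^{ij}_{kr}$ and of $|\sigma^i_{kr}|_{\ell_2}$ over any ball of radius $r_0$ are less than $\varepsilon$, uniformly in $(\omega,t)$. Cover $\bR^d$ by balls $B_{r_0}(x_n)$ with bounded overlap and take a smooth partition of unity $\{\zeta_n^2\}$ with $\sum_n\zeta_n^2=1$ and $|D^\alpha\zeta_n|\le Nr_0^{-|\alpha|}$. For $v_n:=\zeta_n u$ the It\^o product rule produces a system
\begin{equation*}
dv_n^k=\bigl(a^{ij}_{kr}(t,x_n)(v_n)^r_{x^ix^j}+F_n^k\bigr)dt+\bigl(\sigma^i_{kr,m}(t,x_n)(v_n)^r_{x^i}+G_{n,m}^k\bigr)dw^m_t
\end{equation*}
with constant-in-$x$ leading coefficients, where $F_n,G_n$ collect: the source terms $\zeta_n f,\zeta_n g$; the frozen-coefficient errors $(a^{ij}_{kr}(t,x)-a^{ij}_{kr}(t,x_n))(v_n)^r_{x^ix^j}$ and their $\sigma$-analogue, both supported in $B_{r_0}(x_n)$ where the coefficients are of size $\varepsilon$; the $b,c,\nu$ contributions; and cutoff commutators involving $\zeta_{n,x},\zeta_{n,xx}$ which are lower-order in $u$.

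Next I would apply Theorem \ref{thm 1} at $\gamma=0$ to each $v_n$, square the estimate, and sum over $n$. Because $\sum\zeta_n^2=1$ and the cover has finite overlap, $\sum_n\|v_n\|^2_{\bH^2_2(T)}$ is equivalent to $\|u\|^2_{\bH^2_2(T)}$. The frozen-coefficient error contributes at most $c\varepsilon^2\|u_{xx}\|^2_{\bL_2(T)}$, which is absorbed into the left-hand side upon choosing $\varepsilon$ small relative to $\delta$ from Assumption \ref{main assumptions}; the $b,c,\nu$ and cutoff-commutator terms produce at most $C(r_0,L)\|u\|^2_{\bH^1_2(T)}$, which by interpolation between $\bH^2_2$ and $\bL_2$ splits into a small multiple of $\|u_{xx}\|^2_{\bL_2(T)}$ (again absorbed) plus a $C\|u\|^2_{\bL_2(T)}$ piece that Gronwall handles in $t$. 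The most delicate step will be the $\sigma$-piece of the frozen-coefficient error: via It\^o it enters the $v_n$ energy identity at \emph{leading} order through the $\mathcal{A}^{ij}$ cross-term that appears in the proof of Theorem \ref{thm 1}, so the coercivity (\ref{assumption 1}) must survive after absorbing an $O(\varepsilon)$ perturbation coming from the noise. This is precisely where the $p=2$ energy argument works and why, as the authors note, $p>2$ would fail. Uniqueness is then immediate by applying the a priori bound to the difference of two solutions.
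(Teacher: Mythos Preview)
Your proposal is correct and follows essentially the same strategy as the paper, which simply invokes Krylov's proof of Theorem~5.2 in \cite{Kr99} (method of continuity, then a partition-of-unity freezing argument reducing to the constant-coefficient Theorem~\ref{thm 1}, then Gronwall). Two small remarks: Krylov's original argument works directly at level $\gamma$ using pointwise multiplier estimates (the analogue of Lemma~\ref{lemma 1.2.2}) rather than first reducing to $\gamma=0$ via commutators of $(1-\Delta)^{\gamma/2}$ with the coefficients---your commutator reduction is workable but needs a Kato--Ponce-type bound that is not entirely standard under the $B^{|\gamma|_+}$ hypothesis for non-integer $\gamma$; and your last sentence slightly misplaces the $p=2$ obstruction, since the partition-of-unity step itself works for any $p$ and the restriction is inherited entirely from Theorem~\ref{thm 1}.
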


\begin{proof}
It is enough to repeat the proof of Theorem 5.2 in \cite{Kr99},
where the theorem is proved for single equations. The only
difference is that one needs to use Theorem \ref{thm 1} of this
article, instead of Theorem 4.10 in \cite{Kr99}. We leave the
details to the reader.
\end{proof}

\mysection{The system with the space domain $\cO=\bR^d_+$ }
\label{section half}

In this section we study a $W^n_2$-theory of the initial value
problem with the space domain $\bR^d_+$. We use
the Banach spaces  introduced in \cite{kr99}. Let $\zeta\in C^{\infty}_{0}(\bR_{+})$ be a   function satisfying
\begin{equation}
                                       \label{eqn 5.6.5}
\sum_{n=-\infty}^{\infty}\zeta(e^{n+x})>c>0, \quad \forall x\in \bR,
\end{equation}
where $c$ is a constant. It is each to check that  any nonnegative function
$\zeta$ with the property $\zeta>0$ on $[1,e]$ satisfies (\ref{eqn
5.6.5}). For $\theta,\gamma \in \bR$, let $H^{\gamma}_{p,\theta}$ be
the set of all distributions $u=(u^1,u^2,\cdots u^{d_1})$  on
$\bR^d_+$ such that
\begin{equation}
                                                 \label{100.10.03}
\|u\|_{H^{\gamma}_{p,\theta}}^{p}:= \sum_{n\in\bZ} e^{n\theta}
\|\zeta(\cdot)u(e^{n} \cdot)\|^p_{H^{\gamma}_p} < \infty.
\end{equation}
If $g=(g^1,g^2,\ldots,g^{d_1})$ and each $g^k$ is an
 $\ell_2$-valued function,
 then we define
$$
\|g\|_{H^{\gamma}_{p,\theta}(\ell_2)}^{p}= \sum_{n\in\bZ}
e^{n\theta} \|\zeta(\cdot)g(e^{n} \cdot)\|^p_{H^{\gamma}_p(\ell_2)}.
$$
It is known (\cite{kr99}) that up to equivalent norms the space
$H^{\gamma}_{p,\theta}$ is independent of the choice of $\zeta$.
Also,   for any $\eta\in C^{\infty}_0(\bR_+)$, we have
\begin{equation}
                            \label{eqn 5.6.1}
\sum_{n=-\infty}^{\infty}
e^{n\theta}\|u(e^n\cdot)\eta\|^p_{H^{\ga}_p} \leq c
\sum_{n=-\infty}^{\infty}
e^{n\theta}\|u(e^n\cdot)\zeta\|^p_{H^{\ga}_p},
\end{equation}
where $c$ depends only on $d,\gamma,\theta,p,\eta,\zeta$.
Furthermore,
 if $\gamma$ is a nonnegative integer, then
$$
H^{\gamma}_{p,\theta}=\{u:u,x^1Du,\cdots,(x^1)^{|\alpha|}D^{\alpha}u\in
L_p(\:\bR^d_+,(x^1)^{\theta-d}dx\:),\;|\alpha|\le\gamma\},
$$
$$
\|u\|^p_{H^{\gamma}_{p,\theta}}\sim \sum_{|\alpha|\le\gamma}
\int_{\bR^d_+}|(x^1)^{|\alpha|} D^{\alpha}u(x)|^p(x^1)^{\theta-d}
\,dx.
$$

Below we collect some other properties of spaces
$H^{\gamma}_{p,\theta}$. For $\mu\in\mathbb{R}$ let $M^{\mu}$ be the
operator of multiplying by $(x^1)^{\mu}$ and $M=M^1$.

\begin{lemma} (\cite{kr99})
              \label{lemma 2}
(i) Assume that $\gamma-d/p=m+\nu$ for some $m=0,1,\cdots$ and
$\nu\in (0,1]$.  Then for any $u\in H^{\gamma}_{p,\theta}$ and $i\in
\{0,1,\cdots,m\}$, we have
$$
|M^{i+\theta/p}D^iu|_{C}+[M^{m+\nu+\theta/p}D^m u]_{C^{\nu}}\leq c
\|u\|_{ H^{\gamma}_{p,\theta}}.
$$

(ii) Let $\mu\in \bR$. Then $M^{\mu}H^{\gamma}_{p,\theta+\mu
p}=H^{\gamma}_{p,\theta}$,
$$
\|u\|_{H^{\gamma}_{p,\theta}}\leq c
\|M^{-\mu}u\|_{H^{\gamma}_{p,\theta+\mu p}}\leq
c\|u\|_{H^{\gamma}_{p,\theta}}.
$$

(iii) $M D, DM: H^{\gamma}_{p,\theta}\to H^{\gamma-1}_{p,\theta}$
are bounded linear operators, and it holds that
$$
\|u\|_{H^{\gamma}_{p,\theta}}\leq c\|u\|_{H^{\gamma-1}_{p,\theta}}+c
\|M Du\|_{H^{\gamma-1}_{p,\theta}}\leq c
\|u\|_{H^{\gamma}_{p,\theta}},
$$
$$
\|u\|_{H^{\gamma}_{p,\theta}}\leq c\|u\|_{H^{\gamma-1}_{p,\theta}}+c
\|DM u\|_{H^{\gamma-1}_{p,\theta}}\leq c
\|u\|_{H^{\gamma}_{p,\theta}}.
$$

(iv) The operator $\cL:=M^2\Delta+2MD_1$ is a bounded operator from
$H^{\gamma}_{p,\theta}$ onto $H^{\gamma-2}_{p,\theta}$ with the
bounded inverse $\cL^{-1}$ for any $\gamma$.
\end{lemma}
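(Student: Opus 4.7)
The plan is to reduce all four assertions, via the defining formula \eqref{100.10.03}, to standard $H^\gamma_p(\bR^d)$-estimates applied to the dyadic pieces $T_n u := \zeta(\cdot) u(e^n \cdot)$, then reassemble via the weighted $\ell^p$-sum. Because $\mathrm{supp}\,\zeta$ is compact in $(0,\infty)$, every $T_n u$ sits in a fixed strip bounded away from $\{x^1=0\}$, so the full $H^\gamma_p(\bR^d)$-machinery (Sobolev embedding, product rule, boundedness of $D$, \eqref{eqn 5.1.1}) applies uniformly in $n$. Each of (i)--(iv) will follow by tracking the $e^{n\alpha}$-factors from the chain rule and combining them with the weight $e^{n\theta}$; the freedom to change cutoffs, formalized in \eqref{eqn 5.6.1}, is used throughout.

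\textbf{Parts (i)--(iii).} For (i), I would apply the Sobolev embedding $H^\gamma_p \hookrightarrow C^{m,\nu}$ to each $T_n u$; the chain rule $D^\alpha[u(e^n\cdot)] = e^{n|\alpha|}(D^\alpha u)(e^n\cdot)$ together with $x^1 \sim e^n$ on $\mathrm{supp}\,\zeta(e^{-n}\cdot)$ then turns the uniform $C^{m,\nu}$-bound into the claimed weighted pointwise estimate after passing from the weighted $\ell^p$-sum to its $\ell^\infty$-envelope. For (ii), the identity $M^\mu u(e^n y) = e^{n\mu}(y^1)^\mu u(e^n y)$ together with the uniform comparability of $(y^1)^\mu$ to $1$ on $\mathrm{supp}\,\zeta$ shows that $M^\mu$ intertwines with the scaling as multiplication by a bounded smooth function times $e^{n\mu}$; the factor $e^{n\mu p}$ absorbs into $e^{n\theta}$ to shift $\theta \mapsto \theta+\mu p$. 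For (iii), the chain-rule identity $T_n(MDu)(y) = \zeta(y^1) y^1 D_y[u(e^n y)]$ together with $\zeta(y^1)y^1 \in C^\infty_0(\bR^d_+)$ and boundedness of $D: H^\gamma_p \to H^{\gamma-1}_p$ gives the upper bound; the lower bound combines \eqref{eqn 5.1.1} (applied to $T_n u$, supported in a fixed strip) with the observation that on $\mathrm{supp}\,\zeta$ the coordinate $y^1$ is bounded below, so $\zeta D_y v$ is recovered from $\zeta y^1 D_y v$ by multiplication by $\zeta/y^1 \in C^\infty_0$.

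\textbf{Part (iv).} This is the substantive step. Writing $\mathcal{L} = \mathrm{div}(M^2 \nabla \cdot)$ and performing the change of variables $x^1 = e^t$ (leaving $x^j$, $j\ge 2$, unchanged), a direct calculation gives
$$
\mathcal{L}u = \partial_t^2 u + \partial_t u + e^{2t}\Delta_{x'} u.
$$
Fourier transforming in $x'$ converts the solvability of $\mathcal{L}u = f$ into a one-parameter family of second-order ODEs in $t \in \bR$ with parameter $\xi' \in \bR^{d-1}$; each is uniquely and stably solvable, and a weighted Mikhlin-type multiplier argument upgrades the ODE estimates to the claimed $H^\gamma_{p,\theta} \to H^{\gamma-2}_{p,\theta}$ isomorphism. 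The Jacobian of the change of variables turns $(x^1)^{\theta-d}\,dx$ into the exponential weight $e^{t(\theta-d+1)}\,dt\,dx'$, which is exactly the weight handled by the ODE analysis.

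\textbf{Main obstacle.} The difficulty sits squarely in the invertibility half of (iv): one must obtain uniform-in-$\xi'$ control of the Green's functions of the ODE family and verify that no critical values of $\theta$ produce degeneration. Once invertibility is established for integer $\gamma$ (where $H^\gamma_{p,\theta}$ has the concrete weighted-Lebesgue description recalled in the excerpt), extension to general real $\gamma$ is by (iii), complex interpolation, and the duality pairing $H^\gamma_{p,\theta} \leftrightarrow H^{-\gamma}_{p',\theta'}$ inherent in \eqref{100.10.03}.
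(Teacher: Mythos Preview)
The paper does not prove this lemma; it is quoted verbatim from \cite{kr99} and used as a black box. So there is no ``paper's own proof'' to compare against, and your task was really to reconstruct Krylov's argument.

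For parts (i)--(iii) your dyadic-decomposition strategy is exactly the one Krylov uses: everything is pushed through the maps $u\mapsto \zeta(\cdot)u(e^n\cdot)$, standard $H^\gamma_p(\bR^d)$ facts are applied on the fixed strip $\mathrm{supp}\,\zeta$, and the scaling factors $e^{n|\alpha|}$ are absorbed into the weight. This part is fine.

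Part (iv) has two genuine gaps. First, after your change of variables and Fourier transform in $x'$ you obtain
\[
\partial_t^2\hat v+\partial_t\hat v-e^{2t}|\xi'|^2\hat v=\hat g,
\]
which is \emph{not} a constant-coefficient ODE: the coefficient $e^{2t}|\xi'|^2$ blows up as $t\to+\infty$. Calling this ``uniquely and stably solvable'' hides the entire difficulty; the Green's function involves modified Bessel functions, and turning pointwise ODE bounds into $H^\gamma_{p,\theta}\to H^{\gamma-2}_{p,\theta}$ mapping properties is not a routine ``Mikhlin-type'' step, since the symbol is not homogeneous in $(\tau,\xi')$ in any useful sense. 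Krylov's route in \cite{kr99} is different: he works directly in the dyadic picture, where on each piece the operator is uniformly elliptic with coefficients bounded away from degeneracy, applies whole-space elliptic estimates there, and patches back; the lower bound (surjectivity) comes from an explicit a~priori estimate combined with a continuity argument, not from an ODE Green's function.

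Second, you never identify the restriction on $\theta$. The statement holds only for $\theta\in(d-1,\,d-1+p)$; outside this range the homogeneous equation $\cL u=0$ has nontrivial solutions in $H^\gamma_{p,\theta}$ (powers of $x^1$), so $\cL$ cannot be an isomorphism. In the present paper this range is always in force because of the admissibility hypotheses (cf.\ Theorem~\ref{sufficient}), but a self-contained proof of (iv) must make this explicit, and your ODE picture should have revealed it as the condition under which neither characteristic exponent of the limiting Euler equation at $t\to-\infty$ lies in the weighted space.
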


Let us denote
$$
\bH^{\gamma}_{p,\theta}(T)=L_p(\Omega\times
[0,T],\cP,H^{\gamma}_{p,\theta}), \quad
\bH^{\gamma}_{p,\theta}(T,\ell_2)=L_p(\Omega\times
[0,T],\cP,H^{\gamma}_{p,\theta}(\ell_2)),
$$
$$
U^{\gamma}_{p,\theta}=
M^{1-2/p}L_{p}(\Omega,\cF_0,H^{\gamma-2/p}_{p,\theta}), \quad
\bL_{p,\theta}(T)=\bH^{0}_{p,\theta}(T).
$$
The Banach space $\frH^{\ga+2}_p(T)$ below is modified from
$\mathbb{R}$-valued version in \cite{KL2} to the
$\mathbb{R}^{d_1}$-valued version.
\begin{definition}
We write $u \in \frH^{\gamma+2}_{p,\theta}(T)$ if
 $u=(u^1,\cdots, u^{d_1})\in M \bH^{\gamma+2}_{p,\theta}(T)$,
$u(0,\cdot) \in U^{\gamma+2}_{p,\theta}$, and  for some $f \in
M^{-1}\bH^{\gamma}_{p,\theta}(T)$, $g\in
\bH^{\gamma+1}_{p,\theta}(T,\ell_2)$,
$$
du= f \,dt + g_m \, dw^m_t, \quad t\in [0,T]
$$
in the sense of distributions. We define the norm by
\begin{equation}
                      \label{eqn 11.1}
\|u\|_{\frH^{\gamma+2}_{p,\theta}(T)}=:
\|M^{-1}u\|_{\bH^{\gamma+2}_{p,\theta}(T)} + \|M
f\|_{\bH^{\gamma}_{p,\theta}(T)}  +
\|g\|_{\bH^{\gamma+1}_{p,\theta}(T,\ell_2)} +
\|u(0,\cdot)\|_{U^{\gamma+2}_{p,\theta}}\;\;.
\end{equation}
\end{definition}

\begin{definition}
                    \label{definition good}
Let $A^{ij}=(a^{ij}_{kr})$ and $\Sigma^{i}=(\sigma^i_{kr})$ be {\bf
independent} of $x$. We say that $(A^{ij},\Sigma^i,\theta)$ is
admissible (with constant $N$) if whenever $u\in M\bH^{1}_{2,\theta}(T)$  is a solution of the problem
\begin{eqnarray}
&&du^k=(a^{ij}_{kr}u^r_{x^ix^j}+f^k)dt+(\sigma^{i}_{kr,m}u^r_{x^i}+g^{k}_{m})dw^{m}_t,
\;t>0,\;\;x\in \bR^d_+,\nonumber\\
&& u^k(0,\cdot)=u^k_0(\cdot), \label{2010.10.27.11.27}
\end{eqnarray}
satisfying $u\in L_2(\Omega,C([0,T],C^2_0((1/n,n)\times
\{x':|x'|<n\})))$ for some constant $n>0$, it holds that
\begin{equation}
               \label{eqn main}
\|M^{-1}u\|^2_{\bL_{2,\theta}(T)}\leq
N\left(\|Mf\|^2_{\bL_{2,\theta}(T)}+\|g\|^2_{\bL_{2,\theta}(T,\ell_2)}+\|u_0\|^2_{U^1_{2,\theta}}\right).
\end{equation}

\end{definition}

In  Theorem \ref{sufficient} below we give some sufficient
conditions under which  $(A^{ij},\Sigma^i,\theta)$ is admissible. We
define the symmetric part ($S^{ij}$) and the diagonal part
($S^{ij}_d$) of $A^{ij}$ as follows:
$$
S^{ij}=(s^{ij}_{kr}):=(A^{ij}+(A^{ij})^*)/2, \quad \quad
S^{ij}_d=(s^{ij}_{d,kr}):=(\delta_{kr}a^{ij}_{kr})=(\delta_{kr}s^{ij}_{kr}).
$$
We also define
$$
H^{ij}:=A^{ij}-(A^{ij})^*, \quad
 S^{ij}_o=S^{ij}-S^{ij}_d.
 $$
Assume that there exist constants $\alpha,\beta_1,\cdots,\beta_d\in
[0,\infty)$ such that
\begin{equation}
                         \label{eqn 01.26.1}
  |H^{1j}|\leq \beta^j \quad \forall j=1,2,\ldots,d, \quad\quad |S^{11}_o|\leq
  \alpha.
\end{equation}
We denote
 $$
 K:=\sqrt{\sum_{j=1}^{d} (K^j)^2}, \quad \beta:=\sqrt{\sum_{j=1}^d (\beta^j)^2}.
 $$

\begin{theorem}
                         \label{sufficient}
Let one of the following four conditions be satisfied:
\begin{equation}
                       \label{theta 11}
 \theta\in \left(d-\frac{\delta}{2K-\delta},\,\,
d+\frac{\delta}{2K+\delta}\right),
\end{equation}
\begin{equation}
                   \label{con 22}
\theta\in [d,d+1), \quad 8(d+1-\theta)\delta^2-(\theta-d)\beta^2>0,
\end{equation}
\begin{equation}
             \label{con 11}
\theta\in (d-1,d], \quad
2\delta(d+1-\theta)^2-2(d+1-\theta)(d-\theta)\beta-4(d-\theta)(d+1-\theta)K^1>0,
\end{equation}
\begin{equation}
             \label{con 44}
\theta\in (d-1,d], \quad
\left[\frac{d-\theta}{d+1-\theta}(\beta+2\alpha)+\varepsilon\right]|\xi|^2\leq
\xi^*_{i}\left(A^{ij}-\mathcal{A}^{ij}-2\frac{d-\theta}{d+1-\theta}S^{ij}_d\right)\xi_{j},
\end{equation}
where  $\varepsilon>0$,   $\xi$ is any (real) $d_1\times d$ matrix
and  $\xi_i$ is the $i$th column of $\xi$. Then there exists a
constant $N=N(\theta,\delta,K)>0$ so that $(A^{ij},
\Sigma^i,\theta)$ is admissible with constant $N$.
 \end{theorem}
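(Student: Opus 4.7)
My approach is to prove the weighted $L_2$-estimate (\ref{eqn main}) by applying Itô's formula to the weighted functional $\int_{\bR^d_+}|u(t,x)|^2 (x^1)^{\theta-d}\,dx$ and then verifying case-by-case that each of the four conditions (\ref{theta 11})--(\ref{con 44}) yields coercivity of the resulting quadratic form. A key auxiliary tool will be the one-dimensional Hardy inequality
$$
\int_{\bR^d_+}|u|^2(x^1)^{\theta-d-2}\,dx\;\leq\;\frac{4}{(d+1-\theta)^2}\int_{\bR^d_+}|u_{x^1}|^2(x^1)^{\theta-d}\,dx,
$$
valid for $\theta<d+1$ and $u$ with compact support in $(1/n,n)\times\{|x'|<n\}$; this is the bridge between $\|M^{-1}u\|^2_{\bL_{2,\theta}(T)}$ and $\|u_x\|^2_{\bL_{2,\theta}(T)}$.

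For the setup, since $u$ has compact support in the strip $(1/n,n)\times\{|x'|<n\}$ by assumption in Definition \ref{definition good}, all integrations by parts in $x$ produce no boundary contributions. Applying the stochastic product rule exactly as in (\ref{square}), multiplying by $(x^1)^{\theta-d}$, integrating over $\bR^d_+$, taking expectation, and performing one IBP in $x^j$ on the second-order term gives the identity
\begin{align*}
&\bE\!\int_{\bR^d_+}\!|u(T)|^2(x^1)^{\theta-d}dx+2\,\bE\!\int_0^T\!\!\int_{\bR^d_+}\!(u_{x^i})^*(A^{ij}-\mathcal{A}^{ij})u_{x^j}(x^1)^{\theta-d}dx\,ds\\
&\quad+2(\theta-d)\,\bE\!\int_0^T\!\!\int_{\bR^d_+}\!u^k a^{i1}_{kr} u^r_{x^i}(x^1)^{\theta-d-1}dx\,ds \;=\; (\text{initial data})+(\text{terms in }f,g).
\end{align*}
Assumption \ref{main assumptions}(ii) bounds the second term below by $2\delta\|u_x\|^2_{\bL_{2,\theta}(T)}$. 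All condition-dependent content lives in the third (boundary-weight) term, which I split as $A^{i1}=S^{i1}+\tfrac{1}{2}H^{i1}$. A further IBP in $x^i$ on the symmetric piece turns it into $-(\theta-d)(\theta-d-1)\int u^*S^{11}u(x^1)^{\theta-d-2}dx$ (only $i=1$ contributes, since for $i\ne 1$ the weight does not depend on $x^i$); the antisymmetric piece, bounded by $\beta^j$ via (\ref{eqn 01.26.1}), must be controlled directly by Cauchy--Schwarz.

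The heart of the proof is the case-by-case verification that each condition secures the coercivity
$$
2\delta\,\|u_x\|^2_{\bL_{2,\theta}(T)}+(\text{boundary-weight terms})\;\geq\;\varepsilon_0\,\|M^{-1}u\|^2_{\bL_{2,\theta}(T)}-(\text{data}).
$$
Condition (\ref{theta 11}) is the small-oscillation regime: bounding the cross term $u^k A^{11}_{kr}u^r_{x^1}$ by $|A^{11}|\leq K$ and applying Young's inequality and Hardy forces $\theta-d$ to lie inside exactly the stated open interval. Conditions (\ref{con 22}) and (\ref{con 11}) cover the ranges $\theta\in[d,d+1)$ and $\theta\in(d-1,d]$ respectively; the explicit factors $(d+1-\theta)$ and $(d-\theta)$ arise because Hardy's constant scales like $1/(d+1-\theta)^2$ while the weight-derivative factor from the previous step scales like $(\theta-d)$, and the quadratic discriminants in (\ref{con 22}) and (\ref{con 11}) are precisely what ensures a sign after combining these scales with the $\beta, K^1$ control on $H^{1j}, A^{1j}$. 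Condition (\ref{con 44}) is the sharpest: its correction $-2\tfrac{d-\theta}{d+1-\theta}S^{ij}_d$ is exactly the pointwise quadratic form left over after applying Hardy to the diagonal-symmetric residue, so (\ref{con 44}) literally asserts the coercivity inequality needed.

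Once coercivity is secured, the remaining work is routine: $f,g,u_0$-terms and antisymmetric residuals are absorbed by Cauchy--Schwarz with small parameter, yielding (\ref{eqn main}) with $N=N(\theta,\delta,K,\beta,\alpha)$. The principal obstacle is the algebraic bookkeeping for condition (\ref{con 44}): tracking how the diagonal part $S^{ij}_d$, the off-diagonal symmetric part $S^{ij}_o$ (controlled by $\alpha$) and the antisymmetric part $H^{1j}$ (controlled by $\beta$) combine so that the Hardy constant $4/(d+1-\theta)^2$ exactly converts the weight-derivative quadratic form obtained in Step 2 into the stated pointwise matrix inequality.
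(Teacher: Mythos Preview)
Your approach is essentially identical to the paper's: the proof there is carried out in Lemmas \ref{a priori 1} and \ref{a priori 2} via exactly the weighted It\^o identity, the symmetric/antisymmetric decomposition $A^{1j}=S^{1j}+\tfrac12 H^{1j}$, the extra integration by parts on the symmetric piece, and Hardy's inequality (\ref{eqn 3}), with the four conditions handled in precisely the case split you describe. One minor slip to fix: integrate by parts in $x^i$ rather than $x^j$, so that the boundary-weight term involves $a^{1j}_{kr}u^r_{x^j}$ (matching the hypothesis $|A^{1j}|\le K^j$ in (\ref{assumption 2})), and for (\ref{theta 11}) the cross term to bound is $\sum_j a^{1j}_{kr}u^r_{x^j}(M^{-1}u^k)$, not just the $j=1$ piece.
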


\begin{remark}

 (i) If $A^{1j}$ are symmetric, i.e., $\beta=0$, then
(\ref{con 11}) combined with (\ref{con 22}) is the same as the
condition $\theta\in (d-\frac{\delta}{2K^1-\delta},d+1)$, which is
weaker than (\ref{theta 11}).

(ii)  If $A^{ij}$ are diagonal matrices and $\Sigma^i=0$, then
$\alpha=\beta^i=0$ and $A^{ij}=S^{ij}_d$. Since
$1-2(d-\theta)/(d+1-\theta) >0$ for $\theta>d-1$, (\ref{con 44})
combined with (\ref{con 22}) is the same as the condition $\theta\in
(d-1,d+1)$. This is the case when the equations in the system is not
correlated.

\end{remark}

\begin{remark}
We do not know how sharp the above conditions are. However, it is
known (\cite{kr99}) that if $\theta\not\in (d-1,d+1)$, then
Theorem \ref{sufficient} is false even for the (deterministic) heat
equation $u_t=\Delta u+f$. i.e., $(\delta^{ij}I,0,\theta)$ is not
admissible for such $\theta$.
\end{remark}

Theorem \ref{sufficient} is proved in the following two lemmas.

\begin{lemma}
                \label{a priori 1}
Assume that $a^{ij}_{kr}, \sigma^i_{kr,m}$ are independent of $x$,
and
\begin{equation}
                     \label{theta 1}
\theta\in \left(d-\frac{\delta}{2K-\delta},\,\,
d+\frac{\delta}{2K+\delta}\right).
\end{equation}
Let $u\in M\bH^{1}_{2,\theta}(T)$ be a solution of
$($\ref{2010.10.27.11.27}$)$ so that $u\in
L_2(\Omega,C([0,T],C^2_0((1/n,n)\times \{x':|x'|<n\})))$ for some
$n>0$. Then we have
\begin{equation}
               \label{eqn main}
\|M^{-1}u\|^2_{\bL_{2,\theta}(T)}\leq
N\left(\|Mf\|^2_{\bL_{2,\theta}(T)}+\|g\|^2_{\bL_{2,\theta}(T,\ell_2)}+\|u_0\|^2_{U^1_{2,\theta}}\right),
\end{equation}
where $N=N(d,d_1,\delta,\theta,K,L)$.
\end{lemma}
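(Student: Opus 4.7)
The strategy is to derive an energy identity by applying It\^o's formula to $|u(t,x)|^2$ componentwise and then integrating against the weight $\psi(x):=(x^1)^{\theta-d}$ over $\bR^d_+$. The hypothesis that $u\in C^2_0((1/n,n)\times\{|x'|<n\})$ pathwise ensures all spatial integrations by parts have vanishing boundary contributions, integrability is not an issue, and the stochastic integral has zero expectation.

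After taking expectation in $\int 2u^k a^{ij}_{kr}u^r_{x^ix^j}\psi\,dx$ and integrating by parts once in $x^i$, the identity $\psi_{x^i}=(\theta-d)(x^1)^{-1}\psi\,\delta^{1i}$ produces a bulk quadratic term $-2\int(u_{x^i})^*A^{ij}u_{x^j}\psi\,dx$ together with a \emph{weight-boundary} term $-2(\theta-d)\int u^*A^{1j}u_{x^j}(x^1)^{-1}\psi\,dx$. Expanding $|\Sigma^iu_{x^i}+g|^2_{\ell_2}$ and using the identity $\sum_k|\sum_{r,i}\sigma^i_{kr}u^r_{x^i}|^2_{\ell_2}=2(u_{x^i})^*\mathcal{A}^{ij}u_{x^j}$ exactly as in the Cauchy-problem proof, the bulk contributions combine to $2\int(u_{x^i})^*(A^{ij}-\mathcal{A}^{ij})u_{x^j}\psi\,dx\geq 2\delta\|u_x\|^2_{\bL_{2,\theta}(t)}$ by (\ref{assumption 1}).

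The three ``bad'' terms to absorb are $\int 2u^*f\psi\,dx$, the cross term $2(\Sigma^iu_{x^i},g)_{\ell_2}\psi$, and the weight-boundary term. Each is estimated by Cauchy--Schwarz with a free parameter $\varepsilon_j>0$; in particular, $|u^*A^{1j}u_{x^j}|\leq K|u||u_x|$ (from $|A^{1j}|\leq K^j$ and $K=\sqrt{\sum_j(K^j)^2}$) yields the bound $|\theta-d|K(\varepsilon_3|u_x|^2+\varepsilon_3^{-1}|u|^2(x^1)^{-2})\psi$. All resulting $|u|^2(x^1)^{-2}\psi$ contributions are then converted into $|u_x|^2\psi$ contributions by the one-dimensional Hardy inequality in $x^1$ (valid since $u$ vanishes near $\{x^1=0\}$): for $\beta=\theta-d-2\neq -1$,
\[
\int_{\bR^d_+}|u|^2(x^1)^{\theta-d-2}dx\leq\frac{4}{(d+1-\theta)^2}\int_{\bR^d_+}|u_{x^1}|^2(x^1)^{\theta-d}dx.
\]
Collecting everything, the left side becomes $[2\delta-\eta]\|u_x\|^2_{\bL_{2,\theta}(t)}$, whose dominant bad contribution is $|\theta-d|K\bigl(\varepsilon_3+\frac{4}{(d+1-\theta)^2\varepsilon_3}\bigr)$. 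Optimizing at $\varepsilon_3=2/(d+1-\theta)$ produces the sharp requirement $\tfrac{4|\theta-d|K}{d+1-\theta}<2\delta$, which, after separating the cases $\theta>d$ and $\theta<d$, is algebraically equivalent to hypothesis (\ref{theta 1}).

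With $\varepsilon_1,\varepsilon_2,\varepsilon_3$ fixed accordingly, we obtain
\[
\bE\int|u(t)|^2\psi\,dx+c\int_0^t\bE\int|u_x|^2\psi\,dx\,ds\leq\bE\int|u_0|^2\psi\,dx+N\bigl(\|Mf\|^2_{\bL_{2,\theta}(t)}+\|g\|^2_{\bL_{2,\theta}(t,\ell_2)}\bigr),
\]
and one more use of Hardy gives $\|M^{-1}u\|^2_{\bL_{2,\theta}(T)}\leq\frac{4}{(d+1-\theta)^2}\|u_x\|^2_{\bL_{2,\theta}(T)}$; together these yield (\ref{eqn main}). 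The principal difficulty is the sharp book-keeping around the weight-boundary term: both the Hardy constant $4/(d+1-\theta)^2$ and the optimal $\varepsilon_3$ leave essentially no slack, so the range (\ref{theta 1}) is forced by the method. This rigidity is also what prevents a direct extension of this $p=2$ argument to $p>2$, as the authors remark in the introduction.
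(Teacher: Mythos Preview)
Your proof is correct and follows essentially the same route as the paper: It\^o's formula for $|u|^2$ against the weight $(x^1)^{\theta-d}$, integration by parts yielding the coercive term $2(u_{x^i})^*(A^{ij}-\cA^{ij})u_{x^j}$ plus the weight-boundary term $-2(\theta-d)u^*A^{1j}u_{x^j}(x^1)^{-1}$, Young's inequality with a parameter on the latter, and then the Hardy inequality $\|M^{-1}u\|^2_{L_{2,\theta}}\le \tfrac{4}{(d+1-\theta)^2}\|u_x\|^2_{L_{2,\theta}}$ with the same optimal parameter choice (your $\varepsilon_3=2/(d+1-\theta)$ corresponds to the paper's $\kappa=2K/(d+1-\theta)$), leading to the identical sharp condition $2\delta-\tfrac{4|\theta-d|K}{d+1-\theta}>0$.
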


\begin{proof}
As in the proof of Theorem \ref{thm 1}, applying the stochastic
product rule  $d|u^k|^2=2u^kdu^k+du^kdu^k$ for each $k$, we have
\begin{eqnarray}
|u^k(t)|^2&=&|u^k_0|^2+\int^t_0
\left[2u^k(a^{ij}_{kr}u^r_{x^ix^j}+f^k)+|\sigma^i_{kr}u^r_{x^i}+g^{k}|_{\ell_2}^2\right]ds\nonumber\\
&&+\int^t_0 2u^k(\sigma^i_{kr,m}u^r_{x^i}+g^{k}_m)dw^{m}_s,\quad
t>0,\nonumber
\end{eqnarray}
where the summations on $i,j,r$ are understood. Denote
$c:=\theta-d$. For each  $k$, we have
\begin{eqnarray}
0&\leq& \bE\int_{\bR^d_+}|u^k(T,x)|^2(x^1)^c
dx\nonumber\\
&=&\bE\int_{\bR^d_+}|u^k(0,x)|^2(x^1)^cdx +
2\bE\int^T_0\int_{\bR^d_+} a^{ij}_{kr}u^ku^r_{x^ix^j}(x^1)^c
dxds\nonumber\\
&&+\bE\int^T_0\int_{\bR^d_+}
|\sigma^i_{kr}u^r_{x^i}|^2_{\ell_2}(x^1)^c
dxds+2\bE\int^T_0\int_{\bR^d_+} (M^{-1}u^k)(Mf^k)(x^1)^c
dxds\label{2009.06.02 04:27 PM}\\
&&\nonumber\\&&+2\bE\int^T_0\int_{\bR^d_+}
(\sigma^i_{kr}u^r_{x^i},g^k)_{\ell_2}(x^1)^c dxds+
\bE\int^T_0\int_{\bR^d_+} |g^k|_{\ell_2}^2(x^1)^c dxds.\nonumber
\end{eqnarray}
Note that, by integration by parts, the second term in the right
hand side of (\ref{2009.06.02 04:27 PM}) is
\begin{eqnarray}
-2\bE\int^T_0\int_{\bR^d_+}a^{ij}_{kr}u^k_{x^i}u^r_{x^j}(x^1)^cdxds-2c\bE\int^T_0\int_{\bR^d_+}(a^{1j}_{kr}u^r_{x^j})(M^{-1}u^k)(x^1)^cdxds.\label{2009.09.02.6:18PM}
\end{eqnarray}
By summing up the terms in the right hand side of (\ref{2009.06.02
04:27 PM}) over $k$ and rearranging the terms, we get
\begin{eqnarray}
&&2\bE\int^T_0\int_{\bR^d_+}u^*_{x^i}\left(A^{ij}-\mathcal{A}^{ij}\right)u_{x^j}\;(x^1)^c
dxds\nonumber\\
&\leq& -2c
\bE\int^T_0\int_{\bR^d_+}a^{1j}_{kr}u^r_{x^j}(M^{-1}u^k)(x^1)^c
\,dxds +\varepsilon\left(\|M^{-1}u\|^2_{\bL_{2,\theta}(T)}
+\|u_x\|^2_{\bL_{2,\theta}(T)}\right)\nonumber\\
&&+c(\varepsilon)\left(\|Mf\|^2_{\bL_{2,\theta}(T)}
+\|g\|^2_{\bL_{2,\theta}(T)}\right)+\|u(0)\|^2_{U^1_{2,\theta}}\nonumber
\\
&\leq& |c|\left(\kappa\|u_x\|^2_{\bL_{2,\theta}(T)}+
K^2\kappa^{-1}\|M^{-1}u\|^2_{\bL_{2,\theta}(T)}\right)
+\varepsilon\left(\|M^{-1}u\|^2_{\bL_{2,\theta}(T)}
+\|u_x\|^2_{\bL_{2,\theta}(T)}\right)\nonumber\\
&&+c(\varepsilon)\left(\|Mf\|^2_{\bL_{2,\theta}(T)}
+\|g\|^2_{\bL_{2,\theta}(T)}\right)
+\|u(0)\|^2_{U^1_{2,\theta}},\label{eqn 22}
\end{eqnarray}
where for the second inequality we used (\ref{assumption 2}),
(\ref{2009.06.10 04:25 PM}), and the fact: for any vectors $v,w\in
\bR^n$ and $\kappa>0$,
$$
|<A^{1j}v,w>|\leq |A^{1j}v||w|\leq K^j|v||w|\leq
\frac12(\kappa|v|^2+\kappa^{-1}(K^j)^2|w|^2);
$$
$\kappa,\varepsilon$ will be decided below. Condition
(\ref{assumption 1}), inequality (\ref{eqn 22}) and the inequality
\begin{equation}
                    \label{eqn 3}
\|M^{-1}u\|^2_{L_{2,\theta}}\leq
\frac{4}{(d+1-\theta)^2}\|u_x\|^2_{L_{2,\theta}}
\end{equation}
(see Corollary 6.2 in \cite{kr99}) lead us to
 \begin{eqnarray}
 &&2\delta\|u_x\|^2_{\bL_{2,\theta}(T)}-
 |c|\left(\kappa
 +\frac{4K^2}{\kappa(d+1-\theta)^2}\right)\|u_x\|^2_{\bL_{2,\theta}(T)}\nonumber\\
&\le&{\varepsilon}\left(\frac{4}{(d+1-\theta)^2}+d^2L\right)\|u_x\|^2_{\bL_{2,\theta}(T)}
+c(\varepsilon)\left(\|Mf\|^2_{\bL_{2,\theta}(T)}+\|g\|^2_{\bL_{2,\theta}(T)}\right)+\|g\|^2_{\bL_{2,\theta}(T)}+\|u(0)\|^2_{U^1_{2,\theta}}.\nonumber
\end{eqnarray}

Now it is enough to  take $\kappa=2K/(d+1-\theta)$ and observe that
(\ref{theta 1}) is equivalent to the condition
$$
2\delta-|c|\left(\kappa
 +\frac{4K}{\kappa(d+1-\theta)^2}\right)=2\delta-\frac{4|c|K}{d+1-\theta}>0.
 $$
 Choosing a small $\varepsilon=\varepsilon(d,d_1,\delta,\theta,K,L)>0$, the lemma is proved.
\end{proof}

\begin{lemma}
                     \label{a priori 2}
 Assume that $a^{ij}_{kr}, \sigma^i_{kr,m}$ are independent of $x$,
 and  one of (\ref{con 22})-(\ref{con 44}) holds. Then the assertion of Lemma \ref{a priori 1} holds.
\end{lemma}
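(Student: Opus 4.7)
The plan is to follow the template of Lemma \ref{a priori 1} but carefully track the boundary contribution from the weight $(x^1)^c$ with $c:=\theta-d$. Starting from the stochastic product rule $d|u^k|^2=2u^k\,du^k+du^k\,du^k$, multiplying by $(x^1)^c$, integrating in $x$, taking expectation, and integrating by parts in each $x^i$ (using $\partial_{x^i}(x^1)^c=c(x^1)^{c-1}\delta_{i1}$), the only new feature compared to the $\bR^d$ case is the boundary term
\[
T \;:=\; -2c\,\bE\!\int_0^T\!\!\int_{\bR^d_+} a^{1j}_{kr}\,u^r_{x^j}\,(M^{-1}u^k)\,(x^1)^c\,dxds.
\]
After treating the mixed $g$-term as in (\ref{2009.06.10 04:25 PM}) and invoking (\ref{assumption 1}), the main inequality becomes $2\delta\|u_x\|^2_{\bL_{2,\theta}(T)}\leq T + \varepsilon\bigl(\|u_x\|^2+\|M^{-1}u\|^2\bigr) + \text{data}$. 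The whole task is to produce an upper bound on $T$ whose coefficient of $\|u_x\|^2_{\bL_{2,\theta}(T)}$ is strictly less than $2\delta$; Hardy's inequality (\ref{eqn 3}), $\|M^{-1}u\|\leq \tfrac{2}{d+1-\theta}\|u_{x^1}\|$, serves as the bridge between the two norms.

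To analyze $T$, decompose $A^{1j}=S^{1j}+\tfrac12 H^{1j}$ (symmetric plus antisymmetric in $(k,r)$) and further $S^{1j}=S^{1j}_d+S^{1j}_o$, inducing a splitting $T=T_{sd}+T_{so}+T_a$. The symmetry $s^{1j}_{kr}=s^{1j}_{rk}$ allows one to rewrite $\sum_{k,r}s^{1j}_{kr}u^r_{x^j}u^k=\tfrac12\sum_{k,r}s^{1j}_{kr}(u^ku^r)_{x^j}$, so integration by parts in $x^j$ kills the symmetric pieces for $j\neq 1$ (the weight has no $x^j$ dependence and $u$ has compact tangential support), while for $j=1$ a single IBP in $x^1$ produces
\[
T_{sd}+T_{so} \;=\; c(c-1)\,\bE\!\int_0^T\!\!\int_{\bR^d_+} s^{11}_{kr}u^ku^r\,(x^1)^{c-2}\,dxds.
\]
The antisymmetric piece does not simplify and is bounded directly: $|T_a|\leq |c|\beta\,\|u_x\|_{\bL_{2,\theta}(T)}\|M^{-1}u\|_{\bL_{2,\theta}(T)}$.

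The three conditions correspond to three ways of closing the estimate. Under (\ref{con 22}), where $c\in[0,1)$ so $c(c-1)\leq 0$, the coercivity (\ref{assumption 1}) combined with $\mathcal{A}^{11}\geq 0$ gives $s^{11}_{kr}u^ku^r\geq\delta|u|^2$, hence $T_{sd}+T_{so}\leq c(c-1)\delta\|M^{-1}u\|^2\leq 0$; combining with $|T_a|$ via Young's inequality and optimizing the parameter yields exactly the threshold $8(d+1-\theta)\delta^2>(\theta-d)\beta^2$. Under (\ref{con 11}), where $c\in(-1,0]$ and $c(c-1)\geq 0$, the crude bound $|s^{11}_{kr}u^ku^r|\leq K^1|u|^2$ (using $|A^{11}|\leq K^1$) together with Hardy applied to both $\|M^{-1}u\|^2$ and $\|u_x\|\|M^{-1}u\|$ produces $T\leq 2(-c)(2K^1+\beta)(d+1-\theta)^{-1}\|u_x\|^2$, and rearranging $2\delta>$(this coefficient) gives precisely (\ref{con 11}). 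Under (\ref{con 44}), apply Hardy componentwise to $a^{11}_{kk}\int(u^k)^2(x^1)^{c-2}\leq\tfrac{4a^{11}_{kk}}{(1-c)^2}\int(u^k_{x^1})^2(x^1)^c$, summing over $k$ to obtain $T_{sd}\leq 4r\!\int u_{x^1}^*S^{11}_d u_{x^1}(x^1)^c$ with $r:=(d-\theta)/(d+1-\theta)$, while $|T_{so}|\leq 4r\alpha\|u_x\|^2$ and $|T_a|\leq 2r\beta\|u_x\|^2$; transposing the $S^{11}_d$-term to the coercivity side, the required lower bound becomes the modified ellipticity condition (\ref{con 44}).

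The main technical obstacle lies in case (\ref{con 44}): the single-direction Hardy estimate produces only the $(1,1)$ block $S^{11}_d$, whereas (\ref{con 44}) pre-subtracts $2rS^{ij}_d$ for every $(i,j)$. The strict slack $\varepsilon>0$ built into the hypothesis is exactly what absorbs the sign-indefinite off-diagonal-in-$(i,j)$ residual $4r\!\int\sum_{(i,j)\neq(1,1)}u_{x^i}^*S^{ij}_d u_{x^j}(x^1)^c$ appearing when matching the Hardy output against the modified ellipticity; once this is observed, one obtains $c_1\|u_x\|^2_{\bL_{2,\theta}(T)}\leq \text{(data)}$ with $c_1>0$, and a final application of Hardy's inequality $\|M^{-1}u\|^2\leq\tfrac{4}{(d+1-\theta)^2}\|u_x\|^2$ delivers (\ref{eqn main}).
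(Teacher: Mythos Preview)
Your treatment of conditions (\ref{con 22}) and (\ref{con 11}) is correct and matches the paper's argument essentially line for line: the symmetric part of the boundary term integrates by parts to $c(c-1)\int (M^{-1}u)^*A^{11}(M^{-1}u)(x^1)^c$, the antisymmetric piece is handled by Young plus Hardy, and the signs of $c(c-1)$ dictate whether one uses the upper bound $K^1|u|^2$ or the lower bound $\delta|u|^2$.

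The gap is in your handling of (\ref{con 44}). Your componentwise one–dimensional Hardy inequality
\[
a^{11}_{kk}\!\int (u^k)^2 (x^1)^{c-2}\,dx \;\le\; \frac{4\,a^{11}_{kk}}{(1-c)^2}\!\int (u^k_{x^1})^2(x^1)^{c}\,dx
\]
produces only the block $\int u_{x^1}^*S^{11}_d u_{x^1}(x^1)^c$, whereas condition (\ref{con 44}) subtracts $2rS^{ij}_d$ for \emph{all} $(i,j)$. Your proposed fix, absorbing the residual
\[
4r\!\int \sum_{(i,j)\neq(1,1)} u_{x^i}^*S^{ij}_d u_{x^j}\,(x^1)^c\,dxds
\]
into the slack $\varepsilon$ of the hypothesis, does not work: this residual is sign–indefinite and of size $O(r\cdot K)\|u_x\|^2$, while $\varepsilon$ is a \emph{fixed} constant determined by the coefficients, possibly arbitrarily small. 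For a concrete failure take $d=2$, $d_1=1$, $\Sigma=0$, $a^{11}=a^{22}=1$, $a^{12}=a^{21}=1-\eta$ with $\eta\ll 1$; then (\ref{con 44}) holds with $\varepsilon=(1-2r)\eta$, yet the residual can be as negative as $-2r(1-2\eta)\|u_x\|^2$, which is not dominated by $2\varepsilon\|u_x\|^2$ when $r$ is bounded away from $0$.

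What the paper actually uses at this step is the \emph{general--coefficient} Hardy inequality of Corollary~6.2 in \cite{kr99}: for each fixed $k$ and each positive matrix $(a^{ij}_{kk})_{i,j}$,
\[
c(c-1)\,a^{11}_{kk}\!\int_{\bR^d_+} |M^{-1}u^k|^2(x^1)^c\,dx
\;\le\; \frac{4(d-\theta)}{d+1-\theta}\!\int_{\bR^d_+} a^{ij}_{kk}\,u^k_{x^i}u^k_{x^j}\,(x^1)^c\,dx,
\]
which places the \emph{full} elliptic form on the right. Summing over $k$ gives $T_{sd}\le 4r\int u_{x^i}^*S^{ij}_d u_{x^j}(x^1)^c$, and this matches (\ref{con 44}) exactly with no residual; after subtracting $|T_{so}|\le 4r\alpha\|u_x\|^2$ and $|T_a|\le 2r\beta\|u_x\|^2$ one is left with $2\varepsilon\|u_x\|^2$ on the left, and (\ref{eqn main}) follows. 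Replace your one–dimensional Hardy step with this general–coefficient version and the proof closes.
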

\begin{proof}
We modify the proof of Lemma \ref{a priori 1}.

1. Denote $S^{1j}=(s^{1j}_{kr})=\frac12(A^{1j}+(A^{1j})^*)$ as the
symmetric part of $A^{1j}$. Then $A^{1j}=S^{1j}+\frac12 H^{1j}$ and
for any $\xi\in \bR^{d_1}$
$$
\xi^*A^{1j}\xi=\xi^*S^{1j}\xi.
$$
 Let $c:=\theta-d$. Note that, by
integration by parts, we have
$$
\int_{\bR^d_+}u^*S^{11}u_{x^1}(x^1)^{c-1}dx=
-\frac{c-1}{2}\int_{\bR^d_+}u^* S^{11}u
(x^1)^{c-2}dx=-\frac{c-1}{2}\int_{\bR^d_+}u^* A^{11}u (x^1)^{c-2}dx
$$
and hence
\begin{eqnarray*}
 -2c\int_{\bR^d_+}u^*A^{11}u_{x^1}(x^1)^{c-1}dx&=&-2c\int_{\bR^d_+}u^*S^{11}u_{x^1}(x^1)^{c-1}dx
-c\int_{\bR^d_+}u^*H^{11}u_{x^1}(x^1)^{c-1}dx\\
&=&c(c-1)\int_{\bR^d_+}u^* A^{11}u (x^1)^{c-2}dx
-c\int_{\bR^d_+}u^*H^{11}u_{x^1}(x^1)^{c-1}dx.
 \end{eqnarray*}
Moreover, another usage of integration by parts gives us
\begin{eqnarray}
\int_{\bR^d_+}u^*S^{1j}u_{x^j}(x^1)^{c-1}dx=
-\int_{\bR^d_+}u_{x^j}^* S^{1j}u (x^1)^{c-1}dx=-\int_{\bR^d_+}
u^*(S^{1j})^*u_{x^j} (x^1)^{c-1}dx\nonumber
\end{eqnarray}
for $j\ne 1$, meaning that
$\int_{\bR^d_+}u^*S^{1j}u_{x^j}(x^1)^{c-1}dx=0$ and
\begin{eqnarray}
 -2c\int_{\bR^d_+}u^*A^{1j}u_{x^j}(x^1)^{c-1}dx=-c\int_{\bR^d_+}u^*H^{1j}u_{x^j}(x^1)^{c-1}dx.\nonumber
\end{eqnarray}
Thus the second term in (\ref{2009.09.02.6:18PM}) is
\begin{eqnarray}
&&-2c\bE\int^T_0\int_{\bR^d_+}(a^{1j}_{kr}u^r_{x^j})u^k(x^1)^{c-1}dx\nonumber\\
&=&c(c-1)\bE\int^T_0\int_{\bR^d_+}u^* A^{11}u (x^1)^{c-2}dx
-c\bE\int^T_0\int_{\bR^d_+}u^*H^{1j}u_{x^j}(x^1)^{c-1}dx,\nonumber
\end{eqnarray}
where the summation on $j$ includes $j=1$.

Now, as in the proof of Lemma \ref{a priori 1}, we have
\begin{eqnarray}
&&2\delta\|u_x\|^2_{\bL_{2,\theta}(T)}\nonumber\\
&\le&2\bE\int^T_0\int_{\bR^d_+}u^*_{x^i}\left(A^{ij}-\mathcal{A}^{ij}\right)u_{x^j}\;(x^1)^c
dxds\nonumber\\
&\leq&\bE\int_{\bR^d_+}|u^k(0,x)|^2x^cdx\nonumber\\
&+&c(c-1)\bE\int^T_0\int_{\bR^d_+}a^{11}_{kr}(M^{-1}u^k)(M^{-1}u^r)(x^1)^{c}dxds
-c\bE\int^T_0\int_{\bR^d_+}(h^{1j}_{kr}u^r_{x^j})(M^{-1}u^k) (x^1)^{c} dxds\nonumber\\
&+&2\bE\int^T_0\int_{\bR^d_+} (M^{-1}u^k)(Mf^k)(x^1)^c
dxds+2\bE\int^T_0\int_{\bR^d_+}
(\sigma^{i}_{kr}u^r_{x^i},g^k)_{\ell_2}(x^1)^c dxds\nonumber\\&+&
\bE\int^T_0\int_{\bR^d_+} |g^k|_{\ell_2}^2(x^1)^c
dxds.\label{2009.06.03 06:21 PM}
\end{eqnarray}
Note that  the terms, except the second term and the third term, in
the right hand side of (\ref{2009.06.03 06:21 PM}) are bounded by
\begin{eqnarray}
\varepsilon\left(\|M^{-1}u\|^2_{\bL_{2,\theta}(T)}
+\|u_x\|^2_{\bL_{2,\theta}(T)}\right)+c(\varepsilon)\left(\|Mf\|^2_{\bL_{2,\theta}(T)}
+|g\|^2_{\bL_{2,\theta}(T)}\right)+
\|u(0)\|^2_{U^1_{2,\theta}}.\nonumber
\end{eqnarray}
The second and the third terms   will be estimated below in three steps.

2. If $c(c-1)\geq 0$, hence $\theta\in (d-1,d]$, then we have
\begin{eqnarray*}
&&c(c-1)\bE\int^T_0\int_{\bR^d_+}a^{11}_{kr}(M^{-1}u^k)(M^{-1}u^r)(x^1)^{c}dxds\\
&\leq& c(c-1)K^1\|M^{-1}u\|^2_{\bL_{2,\theta}(T)} \leq
\frac{4}{(d+1-\theta)^2}c(c-1)K^1\|u_x\|^2_{\bL_{2,\theta}(T)}
\end{eqnarray*}
and also
\begin{eqnarray*}
\left|-c\int^T_0\int_{\bR^d_+}(h^{1j}_{kr}u^r_{x^j})(M^{-1}u^k)
(x^1)^{c} dxds\right|&\leq&
\frac{1}{2}|c|\left(\kappa\|u_x\|^2_{\bL_{2,\theta}(T)}
+\kappa^{-1}\beta^2\|M^{-1}u\|^2_{\bL_{2,\theta}(T)}\right)\\
&\leq& \frac{1}{2}|c|\left(\kappa
 +\frac{4\beta^2}{\kappa(d+1-\theta)^2}\right)\|u_x\|^2_{\bL_{2,\theta}(T)}
\end{eqnarray*}
for any $\kappa>0$. To minimize this we take
$\kappa=2\beta/(d+1-\theta)$. Then
\begin{equation}
                      \label{eqn 1.28.1}
\left|-c\int^T_0\int_{\bR^d_+}(h^{1j}_{kr}u^r_{x^j})(M^{-1}u^k)
(x^1)^{c} dxds\right| \leq
\frac{2\beta(d-\theta)}{(d+1-\theta)}\|u_x\|^2_{\bL_{2,\theta}(T)}.
\end{equation}
Thus  from (\ref{2009.06.03 06:21 PM}) we deduce
\begin{eqnarray*}
 &&\left(2\delta-\frac{2\beta(d-\theta)}{(d+1-\theta)}-
 \frac{4}{(d+1-\theta)^2}c(c-1)K^1\right)\|u_x\|^2_{\bL_{2,\theta}(T)}\\
&& \leq \varepsilon \|u_x\|^2_{\bL_{2,\theta}(T)}
+c(\varepsilon)\left(\|Mf\|^2_{\bL_{2,\theta}(T)}+\|g\|^2_{\bL_{2,\theta}(T)}\right)+\|u(0)\|^2_{U^1_{2,\theta}}.
\end{eqnarray*}
This and  (\ref{eqn 3}) yield  the inequality
 (\ref{eqn main}) since (\ref{con 11}) is equivalent to
 $$
 2\delta-\frac{2\beta(d-\theta)}{(d+1-\theta)}-
 \frac{4}{(d+1-\theta)^2}c(c-1)K^1 >0.
 $$

3. Again assume $c(c-1)\geq 0$. By (\ref{2009.06.03 06:21 PM}) and
(\ref{eqn 1.28.1}), we have
\begin{eqnarray*}
&&2\bE\int^T_0\int_{\bR^d_+}u^*_{x^i}\left(A^{ij}-\cA^{ij}\right)u_{x^j}\;(x^1)^c
dxds\\
&\leq&\bE\int_{\bR^d_+}|u^k(0,x)|^2x^cdx\\
&+&c(c-1)\bE\int^T_0\int_{\bR^d_+}\left(s^{11}_{d,kr}+s^{11}_{o,kr}\right)(M^{-1}u^k)(M^{-1}u^r)(x^1)^{c}dxds\\
&+&\frac{2\beta(d-\theta)}{(d+1-\theta)}\|u_x\|^2_{\bL_{2,\theta}(T)}
+\varepsilon\|M^{-1}u\|^2_{\bL_{2,\theta}(T)}+c\|Mf\|^2_{\bL_{2,\theta}(T)}.
\end{eqnarray*}

By Corollary 6.2 of \cite{kr99}, for each $t$, we get
\begin{eqnarray}
&&c(c-1) \int_{\bR^d_+}
s^{11}_{d,kr}(M^{-1}u^k)(M^{-1}u^r)(x^1)^{c}\,dx\nonumber\\
&=&c(c-1)
\int_{\bR^d_+} a^{11}_{kk}|M^{-1}u^k|^2(x^1)^cdx\nonumber\\
&\le&\frac{4(d-\theta)}{(d+1-\theta)}\int_{\bR^d_+}
a^{ij}_{kk}u^k_{x^i}u^k_{x^j}\;(x^1)^c\,dx =
\frac{4(d-\theta)}{(d+1-\theta)}\int_{\bR^d_+}u^*_{x^i}
S^{ij}_{d}u_{x^j}\;(x^1)^c\,dx\nonumber
\end{eqnarray}
and by (\ref{eqn 01.26.1}) and (\ref{eqn 3}),
\begin{eqnarray*}
&&c(c-1)\left|\int_{\bR^d_+} s^{11}_{0,kr}M^{-1}u^k\;M^{-1}u^r(x^1)^c\,dx\right|\nonumber\\
 &\leq& \alpha c(c-1)\int_{\bR^d_+}|M^{-1}u|^2(x^1)^c\,dx
\leq
\frac{4\alpha(d-\theta)}{(d+1-\theta)}\int_{\bR^d_+}|u_x|^2\;(x^1)^c\,dx.
\end{eqnarray*}
It follows that
\begin{eqnarray*}
&&\bE\int^T_0\int_{\bR^d_+}u^*_{x^i}\left(A^{ij}-\cA^{ij}-\frac{2(d-\theta)}{(d+1-\theta)}S^{ij}_d\right)u_{x^j}\;(x^1)^c
dxds\\
&&\leq
\frac{(d-\theta)}{(d+1-\theta)}(\beta+2\alpha)\|u_x\|^2_{\bL_{2,\theta}(T)}
 +\varepsilon \|u_x\|^2_{\bL_{2,\theta}(T)}+
 c(\varepsilon)\left(\|Mf\|^2_{\bL_{2,\theta}(T)}+\|g\|^2_{\bL_{2,\theta}(T)}\right)+\|u(0)\|^2_{U^1_{2,\theta}}.
\end{eqnarray*}
This, (\ref{con 44}) and (\ref{eqn 3}) lead to (\ref{eqn main}).

4. If $c(c-1)\leq 0$, hence $\theta\in [d,d+1)$, then
$$
c(c-1)\bE\int^T_0\int_{\bR^d_+}a^{11}_{kr}(M^{-1}u^k)(M^{-1}u^r)(x^1)^{c}dxds\le
\delta c(c-1)\|M^{-1}u\|^2_{\bL_{2,\theta}(T)};
$$
for this we consider a $d_1\times d$ matrix $\xi$ consisting of
$M^{-1}u$ as the first column and zeros for the rest and apply the
condition (\ref{assumption 1}). Next, as before, we have
\begin{eqnarray*}
\left|-c\bE\int^T_0\int_{\bR^d_+}(h^{1j}_{kr}u^r_{x^j})(M^{-1}u^k)
(x^1)^{c} dxds\right|&\leq&
\frac{1}{2}c\left(\kappa\|u_x\|^2_{\bL_{2,\theta}(T)}
+\kappa^{-1}\beta^2\|M^{-1}u\|^2_{\bL_{2,\theta}(T)}\right)
\end{eqnarray*}
and hence
\begin{eqnarray}
 &&2\delta\|u_x\|^2_{\bL_{2,\theta}(T)}-
 \frac12 c\;\left(\kappa\|u_x\|^2_{\bL_{2,\theta}(T)}
+\kappa^{-1}\beta^2\|M^{-1}u\|^2_{\bL_{2,\theta}(T)}\right)-\delta c(c-1)\|M^{-1}u\|^2_{\bL_{2,\theta}(T)}\nonumber\\
&\le&{\varepsilon}\left(\frac{4}{(d+1-\theta)^2}+d^2L\right)\|u_x\|^2_{\bL_{2,\theta}(T)}
+c(\varepsilon)\left(\|Mf\|^2_{\bL_{2,\theta}(T)}+K\|g\|^2_{\bL_{2,\theta}(T)}\right)\nonumber\\
&&+\|g\|^2_{\bL_{2,\theta}(T)}+\|u(0)\|^2_{U^1_{2,\theta}}.\label{2009.06.03
07:56 PM}
\end{eqnarray}
As we take
$$
\kappa=\frac{\beta^2}{2\delta(1-c)},$$ the terms with
$\|M^{-1}u\|^2_{\bL_{2,\theta}(T)}$ in the left hand side of
(\ref{2009.06.03 07:56 PM}) are canceled out. Now, (\ref{con 22})
which is equivalent to $2\delta-\frac{c\beta^2 }{4\delta(1-c)}>0$
gives us (\ref{eqn main}). The lemma is proved.
\end{proof}

The following lemma with Definition \ref{definition good} will lead
to an a priori estimate:
\begin{lemma}
                   \label{lemma 11.1}
Let $\mu\in \bR$, $f\in M^{-1}\bH^{\mu}_{2,\theta}(T),\;g\in
\bH^{\mu+1}_{2,\theta}(T,\ell_2),\; u(0)\in
U^{\mu+2}_{2,\theta}$ and $u\in M\bH^{\mu+1}_{2,\theta}(T)$ be
a solution of  the problem (\ref{2010.10.27.11.27}) on $[0,T]\times
\mathbb{R}^d_+$, then $u\in M\bH^{\mu+2}_{2,\theta}(T)$ and
\begin{equation}
                              \label{eqn 5.1}
\|M^{-1}u\|_{\bH^{\mu+2}_{2,\theta}(T)}\leq
c\left(\|M^{-1}u\|_{\bH^{\mu+1}_{2,\theta}(T)}+\|Mf\|_{\bH^{\mu}_{2,\theta}(T)}
+\|g\|_{\bH^{\mu+1}_{2,\theta}(T,\ell_2)}+\|u(0)\|_{U^{\mu+2}_{2,\theta}}\right),
\end{equation}
where $c=c(d,d_1,\mu,\theta,\delta,K,L)$.
\end{lemma}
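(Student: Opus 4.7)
The plan is to reduce (\ref{eqn 5.1}) to the $x$-independent Cauchy-problem estimate of Theorem~\ref{thm 1} via a localization-rescaling argument of the kind used in Krylov~\cite{kr99} and Kim--Lee~\cite{KL2} for single equations. A clean reduction is possible here because, by Definition~\ref{definition good}, the coefficients $A^{ij},\Sigma^i$ of (\ref{2010.10.27.11.27}) are independent of $x$, so the rescaled problem remains $x$-independent and Theorem~\ref{thm 1} applies with uniform constants.

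Fix $\zeta\in C^\infty_0(\bR_+)$ as in (\ref{eqn 5.6.5}), and for $n\in\bZ$ set $U_n(t,y):=\zeta(y^1)u(t,e^ny)$, extended by zero outside $\{y^1\in\mathrm{supp}\,\zeta\}$. Write $v_n(t,y):=u(t,e^ny)$, so $(v_n)_{y^i}=e^n u_{x^i}(t,e^n\cdot)$ and $(v_n)_{y^iy^j}=e^{2n}u_{x^ix^j}(t,e^n\cdot)$. A routine chain-rule computation (using $\zeta(v_n)_{y^iy^j}=(U_n)_{y^iy^j}-\zeta'[(v_n)_{y^i}\delta_{j1}+(v_n)_{y^j}\delta_{i1}]-\zeta''v_n\delta_{i1}\delta_{j1}$ and an analogous identity for $\zeta(v_n)_{y^i}$) shows that $U_n$ satisfies a Cauchy problem
\[
dU_n=\bigl(\bar a^{ij}_n(U_n)_{y^iy^j}+F_n\bigr)\,dt+\bigl(\bar\sigma^i_n(U_n)_{y^i}+G^m_n\bigr)\,dw^m_t
\]
on $\bR^d$, with $\bar a^{ij}_n(t):=e^{-2n}a^{ij}(t)$, $\bar\sigma^i_n(t):=e^{-n}\sigma^i(t)$, and sources $F_n,G^m_n$ collecting the cross terms $\zeta'(v_n)_{y^j}$, $\zeta''v_n$, $\zeta'v_n$ together with $\zeta f(t,e^n\cdot)$ and $\zeta g(t,e^n\cdot)$. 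To restore uniform ellipticity, rescale time by $s=e^{-2n}t$ and introduce the standard Brownian motion $\hat w^m_s:=e^{-n}w^m_{e^{2n}s}$; then $\hat U_n(s,y):=U_n(e^{2n}s,y)$ satisfies a Cauchy problem on $[0,e^{-2n}T]\times\bR^d$ whose leading coefficients $a^{ij}(e^{2n}s),\sigma^i(e^{2n}s)$ fulfil Assumption~\ref{main assumptions} with the same $\delta,K^j,L$ independently of $n$. Theorem~\ref{thm 1} applied with $p=2,\gamma=\mu$ (and $e^{-2n}T\le T$ for the $T$-dependence of the constant) then yields
\[
\|\hat U_n\|_{\bH^{\mu+2}_2(e^{-2n}T)}\le c\bigl(\|\widehat F_n\|_{\bH^\mu_2(e^{-2n}T)}+\|\widehat G_n\|_{\bH^{\mu+1}_2(e^{-2n}T,\ell_2)}+\|\hat U_n(0)\|_{U^{\mu+2}_2}\bigr)
\]
with $c$ independent of $n$, and a direct change of variables gives $\|\hat U_n\|^2_{\bH^{\mu+2}_2(e^{-2n}T)}=e^{-2n}\|U_n\|^2_{\bH^{\mu+2}_2(T)}$.

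Squaring the previous inequality, multiplying by $e^{n(\theta-2)}$ and summing over $n$, the left-hand side reassembles, via (\ref{100.10.03}), (\ref{eqn 5.6.1}), the identity $\zeta(y^1)(M^{-1}u)(t,e^ny)=e^{-n}(y^1)^{-1}\zeta(y^1)U_n(t,y)$ (with $(y^1)^{-1}\zeta$ another admissible cutoff on $\mathrm{supp}\,\zeta$), and Lemma~\ref{lemma 2}(ii), into a constant multiple of $\|M^{-1}u\|^2_{\bH^{\mu+2}_{2,\theta}(T)}$. The four right-hand side contributions are then matched one by one with the four norms in (\ref{eqn 5.1}) by tracking the $e^n$-factors generated by the rescaling: the initial data reassemble into $\|u(0)\|^2_{U^{\mu+2}_{2,\theta}}$; the piece $\zeta f(t,e^n\cdot)$ (which after the time rescaling carries an additional $e^{2n}$) reassembles into $\|f\|^2_{\bH^\mu_{2,\theta+2}(T)}\simeq\|Mf\|^2_{\bH^\mu_{2,\theta}(T)}$; the piece $\zeta g(t,e^n\cdot)$ (with one $e^n$) reassembles into $\|g\|^2_{\bH^{\mu+1}_{2,\theta}(T,\ell_2)}$; and the cross terms $\zeta'(v_n)_{y^j},\zeta''v_n,\zeta'v_n$, which carry at most one derivative of $u$, reassemble via Lemma~\ref{lemma 2}(ii)--(iii) into $\|M^{-1}u\|^2_{\bH^{\mu+1}_{2,\theta}(T)}$ — precisely the "one-derivative loss" accounting for its appearance on the RHS of (\ref{eqn 5.1}). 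The principal technical obstacle is this final bookkeeping: each scaling factor must be tracked carefully so that the cross terms reconstitute exactly $\|M^{-1}u\|_{\bH^{\mu+1}_{2,\theta}(T)}$ and no stronger norm of $u$, and this uses crucially the equivalences of Lemma~\ref{lemma 2} together with (\ref{eqn 5.6.1}).
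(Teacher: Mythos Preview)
Your localization--rescaling scheme is the same one the paper uses, and the bookkeeping you outline for the source terms and cross terms is correct. There is, however, one genuine gap: your claim that the constant from Theorem~\ref{thm 1} is independent of $n$ rests on ``$e^{-2n}T\le T$ for the $T$-dependence of the constant,'' but this is false for $n<0$. The full-norm estimate (\ref{e 6.5.3}) carries the factor $e^{cT}$, which on the time horizon $[0,e^{-2n}T]$ becomes $e^{c e^{-2n}T}$ and blows up as $n\to -\infty$; summing over $n\in\bZ$ then fails.

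The paper repairs exactly this point by invoking the second-derivative estimate (\ref{e 6.5.2}) instead of (\ref{e 6.5.3}): its constant $c=c(d,d_1,\gamma,\delta,K^j,L)$ is \emph{independent of the time horizon}. Concretely, since each $v_n(t,x):=u(e^{2n}t,e^nx)\zeta(x)$ has compact $x$-support in a fixed strip $\{x^1\in\mathrm{supp}\,\zeta\}$, one first uses (\ref{eqn 5.1.1}) to pass from $\|v_n\|_{\bH^{\mu+2}_2(e^{-2n}T)}$ to $\|(v_n)_{xx}\|_{\bH^{\mu}_2(e^{-2n}T)}$ (Poincar\'e in the $x^1$-direction, uniform in $n$), and only then applies (\ref{e 6.5.2}). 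This yields
\[
\|(v_n)_{xx}\|^2_{\bH^{\mu}_2(e^{-2n}T)}\le c\Bigl(\|f_n\|^2_{\bH^{\mu}_2(e^{-2n}T)}+\|g_n\|^2_{\bH^{\mu+1}_2(e^{-2n}T,\ell_2)}+\|\zeta u_0(e^n\cdot)\|^2_{U^{\mu+2}_2}\Bigr)
\]
with $c$ genuinely independent of $n$, after which the summation over $n$ goes through as you describe. If you replace your appeal to (\ref{e 6.5.3}) by this two-step use of (\ref{eqn 5.1.1}) and (\ref{e 6.5.2}), your argument becomes complete and coincides with the paper's.
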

\begin{proof}
By Lemma \ref{lemma 2} (ii) and (\ref{eqn 5.1.1}), we have
\begin{eqnarray*}
 \|M^{-1}u\|^2_{\bH^{\mu+2}_{2,\theta}(T)}
&\leq&
c\sum_{n}e^{n(\theta-2)}\|u(t,e^nx)\zeta(x)\|^2_{\bH^{\mu+2}_2(T)}\\
&=&
c\sum_{n}e^{n\theta}\|u(e^{2n}t,e^nx)\zeta(x)\|^2_{\bH^{\mu+2}_2(e^{-2n}T)}\\
&\leq&
c\sum_{n}e^{n\theta}\|(u(e^{2n}t,e^nx)\zeta(x))_{xx}\|^2_{\bH^{\mu}_2(e^{-2n}T)}.
\end{eqnarray*}
Denote
\begin{eqnarray}
v_n(\omega,t,x)=u(\omega,e^{2n}t,e^nx)\zeta(x),\quad
(a_n)^{ij}_{kr}(\omega,t)=a^{ij}_{kr}(\omega,e^{2n}t),\quad
(\sigma_n)^i_{kr}(\omega,t)=\sigma^i_{kr}(\omega,e^{2n}t)\nonumber
\end{eqnarray}
\begin{eqnarray}
A^{ij}_n=((a_n)^{ij}_{kr}),\quad
\Sigma^i_n=((\sigma_n)^i_{kr}).\nonumber
\end{eqnarray}
Then, since $v_n$ has compact support in $\bR^d_+$, we can regard it
as a distribution defined on the whole space. Thus $v_n$ is in
$\bH^{\mu+1}_2(e^{-2n}T)$ and satisfies
$$
dv_n=\left(A^{ij}_n(v_n)_{x^ix^j}+f_n\right)dt+((\Sigma^i_n)_m(v_n)_{x^i}+(g_n)_m)d(e^{-n}w^m_{e^{2n}t}),
\quad v_n(0,x)=\zeta(x)u_0(e^nx),
$$
where $(\Sigma^i_n)_m=((\sigma_n)^i_{kr,m})$ and
$$
f_n=-2e^nA^{ij}_n u_{x^i}(e^{2n}t,e^nx)\zeta_{x^j}(x) -A^{ij}_n
u(e^{2n}t,e^nx)\zeta_{x^ix^j}(x)+e^{2n}f(e^{2n}t,e^nx)\zeta(x),
$$
$$
(g_n)_m=-(\Sigma^i_n)_mu(e^{2n}t,e^nx)\zeta_{x^i}(x)+e^ng_m(e^{2n}t,e^nx)\zeta(x).
$$
Then, by Theorem \ref{thm 1}, $v_n$ is in
$\mathcal{H}^{\mu+2}_2(e^{-2n}T)$ and
$$
\|(v_{n})_{xx}\|^2_{\bH^{\mu}_2(e^{-2n}T)}\leq
c(d,d_1,\mu,\delta,K,L)\left(\|f_n\|^2_{\bH^{\mu}_2(e^{-2n}T)}+\|g_n\|^2_{\bH^{\mu+1}_2(e^{-2n}T,\ell_2)}
+\|\zeta(x)u_0(e^nx)\|^2_{U^{\mu+2}_2}\right).
$$
 Thus, by (\ref{eqn 5.6.1})
and  Lemma \ref{lemma 2},
\begin{eqnarray*}
&&\sum_{n}e^{n\theta}\|(u(e^{2n}t,e^nx)\zeta(x))_{xx}\|^2_{\bH^{\mu}_2(e^{-2n}T)}\\
&\leq&
c\sum_{n}\left[e^{n\theta}\|u_{x}(t,e^n\cdot)\zeta_{x}\|^2_{\bH^{\mu}_2(T)}\right]
+c\sum_{n}e^{n(\theta-2)}\|u(t,e^n\cdot)\zeta_{xx}\|^2_{\bH^{\mu}_2(T)}\\&&+
c\sum_{n}e^{n(\theta+2)}\|f(t,e^n\cdot)\zeta\|^2_{\bH^{\mu}_2(T)}
+c\sum_{n}\left[e^{n(\theta-2)}\|u(t,e^n\cdot)\zeta_{x}\|^2_{\bH^{\mu}_2(T)}\right]\\&&
+c\sum_{n}e^{n\theta}\|g(t,e^nx)\zeta\|^2_{\bH^{\mu}_2(T,\ell_2)}+\sum_{n}e^{n\theta}\|u_0(t,e^nx)\zeta\|^2_{U^{\mu+2}_2}\\
&\leq &c\|M^{-1}u\|^2_{\bH^{\mu+1}_{2,\theta}(T)}+c\|M
f\|^2_{\bH^{\mu}_{2,\theta}(T)}+c\|g\|^2_{\bH^{\mu+1}_{2,\theta}(T,\ell_2)}+c\|u_0\|^2_{U^{\mu+2}_{2,\theta}}.
\end{eqnarray*}
The lemma is proved.
\end{proof}

From this point on we assume the following:
\begin{assumption}
                   \label{assump 9.17}
There exists a constant $N>0$, {\bf{independent of $x$}},  so that for each
fixed $x$, $(A^{ij}(\cdot,\cdot,x),\Sigma^i(\cdot,\cdot,x),\theta)$
is admissible with constant $N$.
\end{assumption}

First, we prove our results for the problem (\ref{eqn main}) with
the coefficients independent of $x$.
\begin{theorem}
                          \label{theorem half-constant}
Suppose  Assumptions \ref{main assumptions} and  \ref{assump 9.17}
hold. Also assume that $A^{ij}, \Sigma^i$ are independent of $x$.
Then for any $f\in M^{-1}\bH^{\gamma}_{2,\theta}(T),\;g\in
\bH^{\gamma+1}_{2,\theta}(T,\ell_2),\; u_0\in
U^{\gamma+2}_{2,\theta}$, the problem (\ref{2010.10.27.11.27})
admits a unique solution $u\in \frH^{\gamma+2}_{2,\theta}(T)$, and
for this solution
\begin{equation}
                     \label{eqn 11.2}
\|u\|_{\frH^{\gamma+2}_{2,\theta}(T)}\leq
c\left(\|Mf\|_{\bH^{\gamma}_{2,\theta}(T)}+\|g\|_{\bH^{\gamma+1}_{2,\theta}(T,\ell_2)}+\|u_0\|_{U^{\gamma+2}_{2,\theta}}\right),
\end{equation}
where $c=c(d,d_1,\delta,\theta,K^j,L)$.
\end{theorem}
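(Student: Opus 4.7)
I would split the proof into an a priori estimate for (\ref{eqn 11.2}) and an existence argument by the method of continuity, taking the $\lambda=0$ endpoint to be the decoupled system of scalar heat equations, which is solvable in $\frH^{\gamma+2}_{2,\theta}(T)$ by the single-equation $W^n_2$-theory of \cite{Kr99} (see also \cite{Kim03, KL1}). Uniqueness will follow from the a priori estimate applied to the difference of two solutions.

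\textbf{A priori estimate.} Assumption~\ref{assump 9.17} gives
\[
\|M^{-1}u\|^2_{\bL_{2,\theta}(T)} \;\le\; N\bigl(\|Mf\|^2_{\bL_{2,\theta}(T)} + \|g\|^2_{\bL_{2,\theta}(T,\ell_2)} + \|u_0\|^2_{U^1_{2,\theta}}\bigr),
\]
but only for solutions whose time-slices lie in $C^2_0((1/n,n)\times\{|x'|<n\})$. The first substantial step is to extend this bound to arbitrary $u\in M\bH^{\gamma+1}_{2,\theta}(T)\subset M\bH^1_{2,\theta}(T)$ by a cutoff-and-limit argument: multiply by $\eta_n\in C^{\infty}_0((1/n,n)\times\{|x'|<n\})$, write down the system satisfied by $\eta_nu$ (which includes commutator perturbations of the form $A^{ij}u_{x^i}(\eta_n)_{x^j}$ and $A^{ij}u(\eta_n)_{x^ix^j}$), apply admissibility to $\eta_nu$, and let $n\to\infty$. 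Next, Lemma~\ref{lemma 11.1} with $\mu=\gamma$ bootstraps to the $\bH^{\gamma+2}_{2,\theta}$-level. Finally, the intermediate $\bH^{\gamma+1}_{2,\theta}$-norm of $M^{-1}u$ appearing on the right-hand side of (\ref{eqn 5.1}) is absorbed by the standard interpolation
\[
\|M^{-1}u\|_{\bH^{\gamma+1}_{2,\theta}(T)} \;\le\; \varepsilon\|M^{-1}u\|_{\bH^{\gamma+2}_{2,\theta}(T)} + c(\varepsilon)\|M^{-1}u\|_{\bL_{2,\theta}(T)},
\]
choosing $\varepsilon$ small and plugging in the $\bL_{2,\theta}$-bound to obtain (\ref{eqn 11.2}).

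\textbf{Existence.} Set $A^{ij}_\lambda := (1-\lambda)\delta^{ij}\delta I + \lambda A^{ij}$ and $\sigma^i_{kr,m}(\lambda) := \sqrt{\lambda}\,\sigma^i_{kr,m}$, so that $\mathcal{A}^{ij}_\lambda = \lambda\mathcal{A}^{ij}$ and
\[
A^{ij}_\lambda - \mathcal{A}^{ij}_\lambda \;=\; (1-\lambda)\delta^{ij}\delta I + \lambda\bigl(A^{ij} - \mathcal{A}^{ij}\bigr)
\]
still satisfies coercivity~(\ref{assumption 1}) with the same $\delta$. At $\lambda=0$ the system decouples into $d_1$ independent scalar heat equations $du^k = \delta\Delta u^k\,dt + g^k_m\,dw^m_t$, each of which is solvable in $\frH^{\gamma+2}_{2,\theta}(T)$ by the scalar theory. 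Given uniform admissibility of $(A^{ij}_\lambda,\Sigma^i_\lambda,\theta)$ along the path---which is inherited from Theorem~\ref{sufficient} since each of its sufficient conditions is stable under convex combination with $\delta^{ij}\delta I$ and rescaling of $\Sigma^i$---the uniform a priori estimate above and the standard continuation argument (as in the proof of Theorem~5.1 of \cite{Kr99}) produce a solution at $\lambda=1$.

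\textbf{Main obstacle.} The hardest step will be the extension of the admissibility estimate from the compactly supported class of Definition~\ref{definition good} to all of $M\bH^1_{2,\theta}(T)$. The cutoff approximation is natural, but the commutator terms produced must be controlled in the $M^{-1}\bH^{0}_{2,\theta}$-norm uniformly in $n$, which relies essentially on Lemma~\ref{lemma 2} and the weighted Hardy-type bound~(\ref{eqn 3}). A secondary difficulty is the uniformity of admissibility along the continuity path, which is the reason for checking that the sufficient conditions of Theorem~\ref{sufficient} behave well under convex combinations.
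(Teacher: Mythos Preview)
For $\gamma\ge 0$ your plan coincides with the paper's: method of continuity with the decoupled heat system as the solvable endpoint (the paper cites Theorem~3.3 of \cite{KL2}), and the a priori estimate obtained by combining admissibility with Lemma~\ref{lemma 11.1}. Two minor differences: the paper iterates Lemma~\ref{lemma 11.1} with $\mu=\gamma,\gamma-1,\ldots$ until the lower-order term on the right sits at or below the $L_2$-level, rather than applying it once and interpolating; and instead of your cutoff-and-limit extension of the admissibility bound, the paper invokes the density of $\frH^n_{2,\theta}(T)\cap L_2(\Omega,C([0,T],C^n_0))$ in $\frH^{\gamma}_{2,\theta}(T)$ (Theorem~2.9 of \cite{KL2}), so one may assume from the outset that $u$ is smooth with compact support in $\bR^d_+$. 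This sidesteps the commutator bookkeeping you flag as the main obstacle.

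There is, however, a genuine gap for $\gamma<0$. Your inclusion $M\bH^{\gamma+1}_{2,\theta}(T)\subset M\bH^{1}_{2,\theta}(T)$ fails once $\gamma<0$, and more fundamentally admissibility is an $L_2$-level estimate with $\|Mf\|_{\bL_{2,\theta}(T)}+\|g\|_{\bL_{2,\theta}(T,\ell_2)}+\|u_0\|_{U^1_{2,\theta}}$ on the right, which you cannot dominate by the \emph{weaker} data norms $\|Mf\|_{\bH^{\gamma}_{2,\theta}(T)}+\|g\|_{\bH^{\gamma+1}_{2,\theta}(T,\ell_2)}+\|u_0\|_{U^{\gamma+2}_{2,\theta}}$ when $\gamma<0$; interpolation cannot repair this. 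The paper treats $\gamma\in[-1,0)$ (and then inductively $\gamma\in[-n-1,-n)$) by a different mechanism based on Lemma~\ref{lemma 2}(iv): the operator $\cL=M^2\Delta+2MD_1$ is a bounded isomorphism $H^{\gamma}_{p,\theta}\to H^{\gamma-2}_{p,\theta}$. One applies $\cL^{-1}$ to $f,g,u_0$, solves the system at the higher regularity $\gamma+2$ (already established) to obtain $\bar u\in\frH^{\gamma+4}_{2,\theta}(T)$, and then constructs the solution as $u=\cL\bar u-\tilde u$, where $\tilde u\in\frH^{\gamma+3}_{2,\theta}(T)$ solves an auxiliary system whose data are the commutators of $\cL$ with $A^{ij}D_iD_j$ and $\Sigma^i_mD_i$. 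Uniqueness for $\gamma<0$ is shown by proving that any homogeneous solution in $\frH^{\gamma+2}_{2,\theta}(T)$ in fact lies in $\frH^{\gamma+3}_{2,\theta}(T)$, where uniqueness is already known. This $\cL$-based descent is the missing ingredient in your plan.
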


\begin{proof}
 1. By Theorem 3.3 in \cite{KL2}, for each $k$, the single equation
$$
du^k=(\delta \Delta u^k+f^k)dt+g^kdw_t, \quad u^k(0)=u^k_0
$$
has a solution $u^k\in \frH^{\gamma+2}_{2,\theta}(T)$. As in the
proof of Theorem \ref{thm 1} we only need to show that the estimate
(\ref{eqn 11.2}) holds given that a solution already exists. Also by Lemma \ref{lemma 2} and  (\ref{eqn 11.1}) it is enough to show
\begin{equation}
                        \label{a priori 100}
\|M^{-1}u\|_{\bH^{\gamma+2}_{2,\theta}(T)}\leq
c\left(\|Mf\|_{\bH^{\gamma}_{2,\theta}(T)}+\|g\|_{\bH^{\gamma+1}_{2,\theta}(T,\ell_2)}+\|u_0\|_{U^{\gamma+2}_{2,\theta}}\right).
\end{equation}

2. Assume $\gamma \geq 0$. By Theorem 2.9 in \cite{KL2}, for any nonnegative integer $n\geq
\gamma$, the set
$$
\frH^{n}_{2,\theta}(T) \cap
\bigcup_{N=1}^{\infty}L_2(\Omega,C([0,T],C^n_0((1/N,N)\times
\{x':|x'|<N\})))
$$
is dense in $\frH^{\gamma}_{2,\theta}(T)$ and we may assume that $u$
is sufficiently smooth in $x$ and vanishes near the boundary. Let $m$ be an integer so that $\gamma+1-m\leq 0$. Then by applying  Lemma \ref{lemma 11.1} with $\mu=\gamma,\gamma-1,\cdots,\gamma-m$ in order,
$$
\|M^{-1}u\|_{\bH^{\gamma+2}_{2,\theta}(T)}\leq
c\left(\|M^{-1}u\|_{\bH^{\gamma+1-m}_{2,\theta}(T)}+
\|Mf\|_{\bH^{\gamma}_{2,\theta}(T)}+\|g\|_{\bH^{\gamma+1}_{2,\theta}(T,\ell_2)}+\|u_0\|_{U^{\gamma+2}_{2,\theta}}\right).
$$
Thus to get (\ref{a priori 100}) it is enough to use the fact $\|\cdot\|_{H^{\gamma+1-m}_{2,\theta}}\leq \|\cdot\|_{L_{2,\theta}}$ and the inequality (\ref{eqn main}).

3. Assume $\gamma\in [-1,0)$, i.e., $\gamma+1\geq 0$. Recall
$\mathcal{L}u:=(M^2\Delta+2MD_1)u=(x^1)^2\Delta u+2x^1 u_{x^1}$. We
have $\bar{f}:=\cL^{-1}f\in M^{-1}\bH^{\gamma+2}_{2,\theta}(T)$,
$\bar{g}:=\cL^{-1}g\in \bH^{\gamma+3}_{2,\theta}(T,\ell_2)$ and
$\bar{u}_0:=\cL^{-1}u_0\in U^{\gamma+4}_{2,\theta}$. If $\bar{u}\in
\frH^{\gamma+4}_{2,\theta}(T)$ is the solution of the problem
$$
d\bar u=(A^{ij}\bar u_{x^ix^j}+\bar f)dt+(\Si^i_m \bar u_{x^i}+\bar
g_m)dw^m_t,\quad \bar u(0)=\bar u_0
$$
with $\Sigma^i_m=(\sigma^i_{kr,m})$, then for $v=\cL\bar{u}$ we have
$v\in \frH^{\gamma+2}_{2,\theta}(T)$ and
\begin{eqnarray*}
dv&=&\left(A^{ij} v_{x^ix^j}+f-2(A^{1i}+A^{i1})(\bar
u_{x^1x^i}+x^1\Delta \bar{u}_{x^i})-2A^{11}\Delta\bar
u\right)dt\nonumber\\&&+\left(\Si^i_m v_{x^i}+g_m-2\Si^1_m(\bar
u_{x^1}+x^1 \Delta \bar{u})\right)dw^m_t,\quad
t>0\nonumber\\v(0)&=&u_0.
\end{eqnarray*}
Since $\bar u_{x^1x^i}+x^1\Delta
\bar{u}_{x^i}=M^{-1}\mathcal{L}(\bar{u}_{x^i})\in
M^{-1}\bH^{\gamma+1}_{2,\theta}(T)$, $\bar{u}_{x^ix^j}\in
M^{-1}\bH^{\gamma+2}_{2,\theta}(T)$, $\bar{u}_{x^1}\in
\bH^{\gamma+3}_{2,\theta}(T)$, and $\gamma+1\geq 0$, we can find a
$\tilde{u}\in \frH^{\gamma+3}_{2,\theta}(T)$ as the solution of
\begin{eqnarray*}
d\tilde{u}&=&\left(A^{ij} \tilde{u}_{x^ix^j}-2(A^{1i}+A^{i1})(\bar
u_{x^1x^i}+x^1\Delta \bar{u}_{x^i})-2A^{11}\Delta\bar
u\right)dt\nonumber\\&&+\left(\Si^i_m \tilde{u}_{x^i}-2\Si^1_m(\bar
u_{x^1}+x^1 \Delta
\bar{u})\right)dw^m_t,\nonumber\\\tilde{u}(0)&=&0.
\end{eqnarray*}
Then $u:=v-\tilde{u}\in \frH^{\gamma+2}_{2,\theta}(T)$ satisfies
(\ref{2010.10.27.11.27}) and estimate (\ref{a priori 100}) follows
from the formula defining $v, \tilde{u}$ and the fact that
$$
\|M^{-1}v\|_{\bH^{\gamma+2}_{2,\theta}(T)}\leq c
\|M^{-1}\bar{u}\|_{\bH^{\gamma+4}_{2,\theta}(T)}, \quad
\|M^{-1}\tilde{u}\|_{\bH^{\gamma+2}_{2,\theta}(T)}\leq c
\|M^{-1}\bar{u}\|_{\bH^{\gamma+4}_{2,\theta}(T)}.
$$
Now, we pass to proving the uniqueness of the solution in the space
$\frH^{\gamma+2}_{2,\theta}(T)$. Let
$u\in\frH^{\gamma+2}_{2,\theta}(T)$ be a solution with $f=0,\;g=0,
\;u_0=0$. We claim that $u\equiv 0$. For this we just show that
$u\in \frH^{\gamma+3}_{2,\theta}(T)$, or equivalently
$v=\cL^{-1}u\in \frH^{\gamma+5}_{2,\theta}(T)$ since we have already
proved the uniqueness in $\frH^{\gamma+3}_{2,\theta}(T)$ at step 2;
recall $\gamma+1\ge 0$. In fact, since $u\in
\frH^{\gamma+2}_{2,\theta}(T)$ at least, we have $v\in
\frH^{\gamma+4}_{2,\theta}(T)$ and
$$
dv=(A^{ij}v_{x^ix^j}+\bar f)dt+(\Si^i_m v_{x^i}+\bar g_m)dw^m_t,
$$
where
$$
\bar{f}=A^{ij}\cL^{-1}(u_{x^ix^j})-A^{ij}(\cL^{-1}u)_{x^ix^j},\quad
\bar{g}_m=\Si^i_m \cL^{-1}(u_{x^i})-\Si^i_m(\cL^{-1}u)_{x^i}.
$$
However, we observe
\begin{eqnarray}
\cL
\bar{f}&=&A^{ij}(u_{x^ix^j}-\cL((\cL^{-1}u)_{x^ix^j}))\nonumber\\&=&2(A^{1i}+A^{i1})M^{-1}u_{x^i}-8A^{11}M^{-2}u+2A^{11}\Delta(\cL^{-1}u)\in
M^{-1}\bH^{\gamma+1}_{2,\theta}(T),\nonumber\\
\cL \bar{g}&=&\Si^i(u_{x^i}-\cL((\cL^{-1}u)_{x^i}))=2\Si^1M^{-1}u\in
\bH^{\gamma+2}_{2,\theta}(T,\ell_2).
\end{eqnarray}
Thus $\bar{f}\in M^{-1}\bH^{\gamma+3}_{2,\theta}(T)$ and $\bar{g}\in
\bH^{\gamma+4}_{2,\theta}(T,\ell_2)$. Consequently, $v\in
\frH^{\gamma+5}_{2,\theta}(T)$ and $u\equiv0$.

4. The case $\gamma\in [-n-1,-n)$ with $n\in \{1,2,\cdots\}$ is
treated similarly. The theorem is proved.
\end{proof}

Now, we prove our results for the problem (\ref{eqn main system}) with
variable coefficients. For
  $n \in\bZ$, $\mu \in(0,1]$
 and $k=0,1,2,...$, we define
\begin{equation}
                           \label{eqn 5.6.2}
[u]^{(n)}_{k} =\sup_{\substack{x\in \bR^d_+\\
|\beta|=k}}(x^1)^{k+n}|D^{\beta}u(x)|,
\end{equation}
\begin{equation}
                              \label{eqn 5.6.3}
[u]^{(n)}_{k+\mu} =\sup_{\substack{x,y\in \bR^d_+
\\ |\beta|=k}}
(x^1 \wedge y^1)^{k+\mu+n}\frac{|D^{\beta}u(x)-D^{\beta}u(y)|}
{|x-y|^{\mu}},
\end{equation}
$$
|u|^{(n)}_{k}=\sum_{j=0}^{k}[u]^{(n)}_{j}, \quad |u|^{(n)}_{k+\mu}=
|u|^{(n)}_{k}+ [u]^{(n)}_{k+\mu}.
$$

Here is the main result of this section.

\begin{theorem}
                 \label{theorem half}
Let Assumptions \ref{main assumptions} and  \ref{assump 9.17} hold,
and
\begin{equation}
                       \label{eqn 9.11.1}
|a^{ij}_{kr}(t,\cdot)|^{(0)}_{|\gamma|_+}
+|b^{i}_{kr}(t,\cdot)|^{(1)}_{|\gamma|_+}
+|c_{kr}(t,\cdot)|^{(2)}_{|\gamma|_+} +
|\sigma^{i}_{kr}(t,\cdot)|^{(0)}_{|\gamma+1|_+} +
|\nu_{kr}(t,\cdot)|^{(1)}_{|\gamma+1|_+} \leq L
\end{equation}
and
\begin{equation}
                 \label{eqn 9.16.1}
 |a^{ij}_{kr}(t,x)-a^{ij}_{kr}(t,y)|
+|\sigma^{i}_{kr}(t,x)-\sigma^{i}_{kr}(t,y)|_{\ell_2} +
|Mb^i_{kr}(t,x)|+|M^2c_{kr}(t,x)|+|M\nu_{kr}(t,x)|_{\ell_2}<\kappa
\end{equation}
for all $x,y\in \bR^d_+$ with $|x-y|\leq x^1\wedge y^1$. Then there
exists $\kappa_0=\kappa_0(d,d_1,\theta,\delta, \gamma, K,L)$ so that if
$\kappa\leq \kappa_0$, then for any $f\in
M^{-1}\bH^{\gamma}_{2,\theta}(T)$, $g\in
\bH^{\gamma+1}_{2,\theta}(T,\ell_2)$ and $u_0\in
U^{\gamma+2}_{2,\theta}$, the problem (\ref{eqn main system})
defined on $\Omega\times [0,T]\times \bR^d_+$ admits a unique
solution $u\in \frH^{\gamma+2}_{2,\theta}(T)$, and it holds that
\begin{equation}
                                  \label{eqn 9.9.2}
 \|M^{-1}u\|_{\bH^{\gamma+2}_{2,\theta}(T)}\leq
 c\left(\|Mf\|_{\bH^{\gamma}_{2,\theta}(T)}
 +\|g\|_{\bH^{\gamma+1}_{2,\theta}(T,\ell_2)}+
 \|u_0\|_{U^{\gamma+2}_{2,\theta}}\right)
 \end{equation}
 where $c=c(d,d_1,\delta,K^j,L)$.
 \end{theorem}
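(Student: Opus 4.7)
The plan is to reduce to the constant-coefficient model treated in Theorem \ref{theorem half-constant} via a dyadic freezing-of-coefficients argument, and to upgrade the resulting a priori estimate to solvability by the method of continuity (following the scheme used for single equations in \cite{KL2}, \cite{Kr99}). Thus I focus on establishing the a priori estimate (\ref{eqn 9.9.2}) for a solution $u \in \frH^{\gamma+2}_{2,\theta}(T)$ that is already known to exist and is sufficiently smooth and compactly supported away from $\{x^1=0\}$; such a density result is available from Theorem 2.9 of \cite{KL2}.

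First, localize using the dyadic resolution $\zeta_n(x) := \zeta(e^{-n} x^1)$ implicit in the definition (\ref{100.10.03}) of $H^{\gamma}_{2,\theta}$. For each $n \in \bZ$ pick a reference point $x_n$ in the support of $\zeta_n(\cdot)$ and set $u_n := \zeta_n u$. Then $u_n$ satisfies a system of the form treated in Theorem \ref{theorem half-constant}, namely
\begin{align*}
du_n^k &= \bigl(a^{ij}_{kr}(t,x_n)\,(u_n^r)_{x^ix^j} + F_n^k\bigr)\,dt + \bigl(\sigma^i_{kr,m}(t,x_n)\,(u_n^r)_{x^i} + G_{n,m}^k\bigr)\,dw^m_t,
\end{align*}
where $F_n$ and $G_n$ collect (i) the commutators of $\zeta_n$ with the second-order operator (only lower-order derivatives of $u$, multiplied by derivatives of $\zeta_n$), (ii) the freezing errors $(a^{ij}_{kr}(t,x)-a^{ij}_{kr}(t,x_n))u_{x^ix^j}$ and the analogous $\sigma$-errors, and (iii) the lower-order contributions $b^i_{kr}u^r_{x^i} + c_{kr}u^r$ and $\nu_{kr}u^r$. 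Since Assumption \ref{assump 9.17} is pointwise in $x$, the frozen triple $(A^{ij}(\cdot,\cdot,x_n),\Sigma^i(\cdot,\cdot,x_n),\theta)$ is admissible with constant $N$, so Theorem \ref{theorem half-constant} applies to $u_n$ and yields a bound of $\|u_n\|_{\frH^{\gamma+2}_{2,\theta}(T)}$ in terms of $MF_n$, $G_n$, and $\zeta_n u_0$.

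Now sum in $n$. The scale-invariance of the weighted norm combined with (\ref{eqn 5.6.1}) ensures that the commutator contributions are dominated by $\|M^{-1}u\|_{\bH^{\gamma+1}_{2,\theta}(T)}$. For the freezing-error terms and the lower-order terms one exploits (\ref{eqn 9.16.1}): on the support of $\zeta_n$ one has $|x-x_n|\leq x^1\wedge x_n^1$ (by the dyadic choice, after fixing the support of $\zeta$), so $|A^{ij}(t,x)-A^{ij}(t,x_n)|$ and $|\Sigma^i(t,x)-\Sigma^i(t,x_n)|_{\ell_2}$ are bounded by $\kappa$, while $|Mb^i|,|M^2c|,|M\nu|_{\ell_2}\le\kappa$ directly. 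Using that $b^i u_{x^i} = (Mb^i)\cdot M^{-1}u_{x^i}$, $cu=(M^2c)\cdot M^{-2}u$, etc., together with Lemma \ref{lemma 2}, these terms contribute at most $C\kappa\,\|M^{-1}u\|_{\bH^{\gamma+2}_{2,\theta}(T)}$ after summation, which for $\kappa\le\kappa_0$ small can be absorbed into the left-hand side. The residual term $\|M^{-1}u\|_{\bH^{\gamma+1}_{2,\theta}(T)}$ is interpolated and closed using the baseline $\bL_{2,\theta}$-estimate (\ref{eqn main}) granted by Assumption \ref{assump 9.17}, bootstrapped to $\bH^{\gamma+2}_{2,\theta}$ by Lemma \ref{lemma 11.1}. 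Solvability follows by the method of continuity connecting the given coefficients to $(\delta^{ij}\delta I, 0)$; uniqueness follows by applying the a priori estimate to the difference of two solutions, reducing the negative-$\gamma$ case as in Step 3 of Theorem \ref{theorem half-constant} by means of $\cL^{-1}$ and Lemma \ref{lemma 2} (iv).

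The main obstacle is the clean $H^{\gamma}_{2,\theta}$-multiplier estimate for the freezing errors when $\gamma$ is noninteger: controlling $(a^{ij}(t,\cdot)-a^{ij}(t,x_n))u_{x^ix^j}$ in $\bH^{\gamma}_{2,\theta}(T)$ requires Hölder control of the coefficients, which is precisely what the scale-invariant seminorms (\ref{eqn 5.6.2})–(\ref{eqn 5.6.3}) in hypothesis (\ref{eqn 9.11.1}) encode; they are chosen so that after the rescaling $x\mapsto e^{-n}x$ inherent in (\ref{100.10.03}) the multiplier norms become uniformly bounded in $n$. Once this uniformity is pinned down, the smallness threshold $\kappa_0$ depends only on the constants produced in Theorem \ref{theorem half-constant} and Lemma \ref{lemma 11.1}, so one can indeed fix $\kappa_0=\kappa_0(d,d_1,\theta,\delta,\gamma,K,L)$ independently of the particular coefficients.
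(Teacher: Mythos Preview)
There is a genuine gap in the localization step. The functions $\zeta_n(x)=\zeta(e^{-n}x^1)$ depend only on $x^1$, so $\text{supp}\,\zeta_n$ is an infinite slab $\{x:\, e^{n}a\le x^1\le e^{n}b\}\times\bR^{d-1}$. For two points $x,x_n$ in the same slab the tangential distance $|x'-x_n'|$ is unbounded, so in general $|x-x_n|\not\le x^1\wedge x_n^1$, and hypothesis (\ref{eqn 9.16.1}) gives you no control whatsoever on $|a^{ij}_{kr}(t,x)-a^{ij}_{kr}(t,x_n)|$ or the analogous $\sigma$-difference. The freezing error therefore is \emph{not} $O(\kappa)$ and cannot be absorbed. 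The paper avoids this by using, instead of the dyadic $\zeta_n$, a genuine partition of unity $\{\eta_n\}$ in all $d$ variables (Lemma \ref{lemma 11.14.1}) with two built-in features: (ii) any two points in $\text{supp}\,\eta_n$ satisfy $|x-y|\le N(\varepsilon)(x^1\wedge y^1)$, which is exactly what activates (\ref{eqn 9.16.1}) after chaining along at most $N(\varepsilon)$ intermediate points; and (i) the scale-adjusted derivatives $\sum_n M^{|\alpha|}|D^\alpha\eta_n|$ can be made $\le\varepsilon$, so the commutator terms carry a small constant rather than merely being ``lower order''. Both properties are essential; your $\zeta_n$ has neither.

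There are two further soft spots. First, your absorption of the residual $\|M^{-1}u\|_{\bH^{\gamma+1}_{2,\theta}(T)}$ by ``interpolation and (\ref{eqn main})'' is circular as written: (\ref{eqn main}) and Lemma \ref{lemma 11.1} are constant-coefficient statements, and applying them after localization reintroduces the very freezing errors you are trying to close. The paper handles this by a case split on $\gamma$ (six cases), using Lemma \ref{lemma 1.2.2} for the multiplier bounds, the small-$\varepsilon$ partition to absorb commutators in the noninteger case, and a two-pass argument (first at level $\gamma$, then at level $\gamma-1$) in the integer cases. Second, for negative integer $\gamma$ the $\cL^{-1}$ trick from Theorem \ref{theorem half-constant} does not transfer cleanly to variable coefficients because $\cL$ does not commute with the variable-coefficient operator; the paper instead uses the representation $f=MD_\ell f^\ell$, $g=MD_\ell g^\ell$ (Corollary 2.12 of \cite{kr99}) to build the solution from the already-established case $\gamma+1$, together with a mollification of $\sigma,\nu$ to gain the extra regularity needed at $\gamma=-1$.
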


\begin{remark}
See Remark \ref{05.18.01}(i) for the better understanding of the  condition (\ref{eqn 9.16.1}).
\end{remark}

\begin{remark}
Since $C^{\infty}_0$ is dense in $H^{\gamma}_{p,\theta}$, zero
boundary condition is implicitly imposed in Theorem \ref{theorem
half} (and in Theorem \ref{main theorem on domain} below).
\end{remark}

 To prove Theorem \ref{theorem half} we use
the following  three lemmas  taken from \cite{KK2}.

\begin{lemma}
                                         \label{lemma 8.26.10}
Let constants $C,\delta$ be in $(0,\infty)$, and $q$ be the smallest
integer such that $|\gamma|+2\leq q$.

(i) Let $\eta_{n}\in C^{\infty}(\bR^{d}_{+})$, $n=1,2,...$, satisfy
\begin{equation}
                                                   \label{8.26.11}
\sum_{n}M^{|\alpha|} |D^{\alpha} \eta_n |\leq C
\quad\text{in}\quad\bR^{d}_{+}
\end{equation}
for    any multi-index $\alpha$ such that $ 0\leq |\alpha| \leq q$.
Then for any $u\in H^{\gamma}_{p,\theta} $
$$
\sum_{n} \|\eta_{n}u\|^{p}_{H^{\gamma}_{p,\theta} } \leq
NC^{p}\|u\|^{p}_{H^{\gamma}_{p,\theta} },
$$
where the constant $N$ is independent of $u$, $\theta$, and $C$.

(ii) If, in addition to the condition in (i), $\sum_{n} \eta_{n}
^{2}\geq\delta$ on $\bR^{d}_{+}$, then for any $u\in
H^{\gamma}_{p,\theta} $,
\begin{equation}
                                                   \label{11.25.1}
\|u\|^{p}_{H^{\gamma}_{p,\theta} }\leq N\sum_{n}
\|\eta_{n}u\|^{p}_{H^{\gamma}_{p,\theta} },
\end{equation}
where the constant $N$ is independent of $u$ and $\theta$.
\end{lemma}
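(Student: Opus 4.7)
Unwind the norm $\|\cdot\|_{H^\gamma_{p,\theta}}$ via its defining series \eqref{100.10.03}, exchange the order of summation, and reduce each inner term to a multiplier estimate on the unweighted $H^\gamma_p$; deduce (ii) from (i) by duality after constructing an associated partition of unity.

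\textbf{Part (i).} Starting from
\[
\sum_n \|\eta_n u\|^p_{H^\gamma_{p,\theta}} = \sum_m e^{m\theta}\sum_n \|\zeta(\cdot)\eta_n(e^m\cdot)u(e^m\cdot)\|^p_{H^\gamma_p},
\]
fix $\tilde\zeta\in C^\infty_0(\bR_+)$ with $\tilde\zeta\equiv1$ on $\mathrm{supp}\,\zeta$ and split $\zeta(x)\eta_n(e^m x)u(e^m x) = \phi_{n,m}(x)\cdot[\tilde\zeta(x)u(e^m x)]$ where $\phi_{n,m}(x):=\zeta(x)\eta_n(e^m x)$. The standard multiplier estimate on $H^\gamma_p$ gives $\|\phi_{n,m}\tilde\zeta u(e^m\cdot)\|_{H^\gamma_p}\le N\|\phi_{n,m}\|_{C^q}\|\tilde\zeta u(e^m\cdot)\|_{H^\gamma_p}$. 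Since $x^1$ is pinched between two positive constants on $\mathrm{supp}\,\zeta$, for $|\alpha|\le q$ one has
\[
|D^\alpha[\eta_n(e^m\cdot)](x)|=e^{m|\alpha|}|(D^\alpha\eta_n)(e^m x)|\le N(\zeta)\,(M^{|\alpha|}|D^\alpha\eta_n|)(e^m x),
\]
so hypothesis \eqref{8.26.11} gives $\sum_n\|\phi_{n,m}\|_{C^q}\le NC$ and in particular $\|\phi_{n,m}\|_{C^q}\le NC$ for each $n$. The elementary inequality $\sum a_n^p\le(\sup_n a_n)^{p-1}\sum_n a_n$ then yields $\sum_n\|\phi_{n,m}\|^p_{C^q}\le N^pC^p$. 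Multiplying by $e^{m\theta}$, summing in $m$, and invoking \eqref{eqn 5.6.1} with $\tilde\zeta$ in place of $\zeta$ proves (i).

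\textbf{Part (ii).} Set $\Psi:=\sum_k\eta_k^2\ge\delta$ and $\xi_n:=\eta_n/\Psi$, so $\sum_n\xi_n\eta_n\equiv1$. Leibniz and the quotient rule, together with the lower bound $\Psi\ge\delta$, the bound $|\eta_k|\le C$ (case $|\alpha|=0$ of \eqref{8.26.11}), and the observation that $M^{|\beta|}|D^\beta\Psi|$ inherits a uniform bound from \eqref{8.26.11}, show that $\{\xi_n\}$ again satisfies \eqref{8.26.11} with a new constant $C'=C'(C,\delta,q)$. Consequently (i) is available for $\{\xi_n\}$. Now pair $u=\sum_n\xi_n(\eta_n u)$ against an element $v$ of norm $\le 1$ in the dual of $H^\gamma_{p,\theta}$ (a weighted space of type $H^{-\gamma}_{p',\theta^*}$ in the sense of \cite{kr99}):
\[
|(u,v)|=\Big|\sum_n(\eta_n u,\xi_n v)\Big|\le\Big(\sum_n\|\eta_n u\|^p_{H^\gamma_{p,\theta}}\Big)^{1/p}\Big(\sum_n\|\xi_n v\|^{p'}_{H^{-\gamma}_{p',\theta^*}}\Big)^{1/p'}
\]
by discrete H\"older, and part (i) applied in $H^{-\gamma}_{p',\theta^*}$ bounds the second factor by $NC'\|v\|\le NC'$. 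Taking the supremum over such $v$ yields \eqref{11.25.1}.

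\textbf{Main obstacle.} The delicate point is the multiplier estimate on $H^\gamma_p$ for general real $\gamma$: this is exactly why the statement requires $q$ to be an integer with $|\gamma|+2\le q$, so that enough classical derivatives of $\phi_{n,m}$ are controlled. A secondary nuisance is identifying the correct weighted dual space used in the H\"older step of (ii); this is standard within the framework of \cite{kr99} but must be invoked explicitly, and is the reason the present lemma is taken from \cite{KK2} rather than reproved here.
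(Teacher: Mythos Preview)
The paper does not prove this lemma; it is quoted verbatim from \cite{KK2}, so there is no ``paper's own proof'' to compare against. That said, your argument for part (i) contains a genuine gap.

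The problematic step is the claim that hypothesis \eqref{8.26.11} yields
\[
\sum_n \|\phi_{n,m}\|_{C^q}\le NC.
\]
The hypothesis is a \emph{pointwise} bound on the sum, i.e.\ $\sup_x \sum_n a_n(x)\le C$, whereas the $C^q$-norm is a supremum, so what you actually need is $\sum_n \sup_x a_n(x)\le C$, and in general the latter can be infinite even when the former is finite (take, for instance, a sequence of bump functions with disjoint supports and height~$1$). Consequently the chain ``scalar multiplier estimate $\Rightarrow$ $\sum_n\|\phi_{n,m}\|_{C^q}^p\le N^pC^p$'' breaks down.

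The fix is to keep the summation over $n$ \emph{inside} the spatial integral rather than pulling it outside through a sup-norm. For nonnegative integer $\gamma$ one expands $D^\alpha(\phi_{n,m}w)$ by Leibniz, raises to the $p$-th power, and uses at each point $x$ the elementary inequality
\[
\sum_n |D^\beta\phi_{n,m}(x)|^p \le \Bigl(\sup_n |D^\beta\phi_{n,m}(x)|\Bigr)^{p-1}\sum_n |D^\beta\phi_{n,m}(x)| \le (NC)^p,
\]
which \emph{does} follow from \eqref{8.26.11} since both factors on the right are pointwise bounds. Integrating then gives the result. Negative integer $\gamma$ follows by duality (exactly your mechanism from part (ii)), and the remaining real $\gamma$ by complex interpolation of the $H^\gamma_p$ scale. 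This is essentially what is done in \cite{KK2}.

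Your part (ii) is fine: constructing $\xi_n=\eta_n/\Psi$, checking that $\{\xi_n\}$ inherits a bound of type \eqref{8.26.11}, and then running the duality/H\"older argument is the standard route, and the identification of $(H^\gamma_{p,\theta})^*$ with $H^{-\gamma}_{p',\theta'}$ is indeed available from \cite{kr99}.
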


The reason that the first inequality in (\ref{11.14.1}) below is
written for $\eta_n^4$ (not for $\eta_n^2$ as in the above lemma) is
to have the possibility to apply Lemma \ref{lemma 8.26.10} to
$\eta_n^2$. Also, note $\sum a^2 \leq (\sum |a|)^2$.

\begin{lemma}
                                            \label{lemma 11.14.1}
For each $\varepsilon>0$ and $q=1,2,...$, there exist non-negative
functions $\eta_{n}\in C^{\infty}_{0}(\bR^{d}_{+})$, $n=1,2,...$
such that (i) on $\bR^{d}_{+}$ for each multi-index $\alpha$ with
$1\leq|\alpha|\leq q$ we have
\begin{equation}
                                             \label{11.14.1}
\sum_{n}\eta^{4}_{n}\geq1,\quad \sum_{n} \eta _{n} \leq N(d),
\quad\sum_{n}M^{|\alpha|}|D^{\alpha}\eta_{n}|\leq\varepsilon;
\end{equation}

(ii) for any $n$ and $x,y\in\text{\rm supp}\,\eta_{n}$ we have $
|x-y|\leq N ( x^{1}\wedge y^{1})$, where
$N=N(d,q,\varepsilon)\in[1,\infty)$.

\end{lemma}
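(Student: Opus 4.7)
\textit{Plan of proof.} The plan is to construct $\eta_n$ as a tensor product $\phi_n(x^1)\chi_{n,k'}(x')$ indexed by $(n,k')\in\bZ\times\bZ^{d-1}$ (and then re-enumerated by a single integer), with the $x^1$-direction partitioned logarithmically at scale $M_0$ and the transverse $x'$-direction partitioned at scale $N_0 e^{M_0 n}$, where $M_0,N_0$ are large parameters to be tuned to $\varepsilon,q$ at the end. First I would fix a template $\psi\in C^\infty_0(\bR)$ with $0\le\psi\le 1$, $\operatorname{supp}\psi\subset[0,2]$, and $\psi\equiv 1$ on $[1/2,3/2]$; this guarantees $\sum_{k\in\bZ}\psi^4(t-k)\ge 1$ for every $t\in\bR$. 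Then I would set
\[
\phi_n(x^1):=\psi\!\left(\tfrac{\ln x^1}{M_0}-n\right),\quad \chi_{n,k'}(x'):=\prod_{i=2}^d\psi\!\left(\tfrac{x^i}{N_0 e^{M_0 n}}-k^i\right),\quad \eta_{n,k'}:=\phi_n\chi_{n,k'}.
\]

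Next I would verify (ii) and the easy parts of (i). The support of $\eta_{n,k'}$ lies in $[e^{M_0 n},e^{M_0(n+2)}]\times\prod_{i=2}^d[k^i N_0 e^{M_0 n},(k^i+2)N_0 e^{M_0 n}]$; on this set $x^1\wedge y^1\ge e^{M_0 n}$, $|x^1-y^1|\le e^{2M_0}e^{M_0 n}$, and $|x^i-y^i|\le 2N_0 e^{M_0 n}$, so (ii) holds with $N=N(d,M_0,N_0)$. At any fixed point at most $3^d$ indices $(n,k')$ contribute, so $\sum_{n,k'}\eta_{n,k'}\le 3^d=:N(d)$; and by the choice of $\psi$ together with separation of variables, $\sum_{n,k'}\eta_{n,k'}^4=\sum_n\phi_n^4\cdot\sum_{k'}\chi_{n,k'}^4\ge 1$.

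The remaining step, and the heart of the argument, is the derivative bound. A direct Fa\`a di Bruno computation with $u=\ln x^1/M_0-n$, $u'=1/(M_0 x^1)$ yields $|(x^1)^{\alpha_1}D^{\alpha_1}_{x^1}\phi_n|\le C_q/M_0$ for each $1\le\alpha_1\le q$, since every term in the expansion carries at least one factor $1/M_0$. Because $\chi_{n,k'}(x')$ depends on $x'$ only through $x'/(N_0 e^{M_0 n})$, on the support of $\phi_n$ we have $|(x^1)^{|\alpha'|}D^{\alpha'}_{x'}\chi_{n,k'}|\le C_q(x^1/(N_0 e^{M_0 n}))^{|\alpha'|}\le C_q(e^{2M_0}/N_0)^{|\alpha'|}$. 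Since $\phi_n$ and $\chi_{n,k'}$ depend on disjoint variables these bounds multiply, and summing over the $\le 3^d$ nonzero indices at any $x$ yields, for $1\le|\alpha|\le q$,
\[
\sum_{n,k'}M^{|\alpha|}|D^\alpha\eta_{n,k'}|\le 3^d C_q\max\!\left(\tfrac{1}{M_0},\tfrac{e^{2M_0}}{N_0}\right).
\]
The plan is to choose first $M_0\ge 3^d C_q/\varepsilon$, then $N_0\ge e^{2M_0}\cdot 3^d C_q/\varepsilon$; this completes the construction. The main obstacle I anticipate is the two-scale trade-off: large $M_0$ kills $x^1$-derivatives but inflates the ratio of maximal to minimal $x^1$ on each stratum to $e^{2M_0}$, forcing $N_0$ (and hence the constant $N$ in (ii)) to be exponentially large in $1/\varepsilon$. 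This dependence is acceptable because $N$ is permitted to depend on $(d,q,\varepsilon)$.
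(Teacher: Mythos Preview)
Your construction is correct. Note, however, that the paper does not supply its own proof of this lemma: it is quoted verbatim from \cite{KK2}, so there is nothing in the present paper to compare against beyond the bare statement. Your logarithmic partition in $x^1$ combined with a commensurate lattice partition in $x'$ is exactly the standard mechanism behind such ``Whitney-type'' coverings adapted to the weight $M=x^1$, and is in the same spirit as the construction in \cite{KK2}; the key structural point --- that the overlap bound $3^d$ is independent of the stretching parameters $M_0,N_0$, so that $\sum_n\eta_n\le N(d)$ holds with $N(d)$ depending only on $d$ while the derivative sums can be driven below $\varepsilon$ by enlarging $M_0,N_0$ --- is identified correctly.

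Two minor remarks on presentation. First, your displayed identity $\sum_{n,k'}\eta_{n,k'}^4=\sum_n\phi_n^4\cdot\sum_{k'}\chi_{n,k'}^4$ is literally false as written (the inner sum depends on $n$); what you mean and use is $\sum_{n,k'}\eta_{n,k'}^4=\sum_n\phi_n^4\bigl(\sum_{k'}\chi_{n,k'}^4\bigr)\ge\sum_n\phi_n^4\ge1$, which is fine. Second, in the mixed-derivative case $\alpha_1\ge1$, $|\alpha'|\ge1$ the bound picks up a product of two $C_q$'s; absorbing this into a single constant is harmless but worth stating explicitly. Neither point affects the validity of the argument.
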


\begin{lemma}
                                           \label{lemma 1.2.2}
Let $p\in(1,\infty)$, $\gamma,\theta\in\bR$. Then there exists a
constant $N=N(\gamma,|\gamma|_+,p,d)$   such that  if $f\in
H^{\gamma}_{p,\theta}$ and $a$ is a function with the finite norm
$|a|^{(0)}_{|\gamma|_+}$, then
 \begin{equation}
                                                 \label{8.19.05}
\|af\|_{H^{\gamma}_{p,\theta}} \leq N |a|^{(0)}_{|\gamma|_+}
\|f\|_{H^{\gamma }_{p,\theta}}.
\end{equation}
In addition,

(i) if $\gamma=0,1,2,...$, then
 \begin{equation}
                                            \label{1.24.06}
\|af\|_{H^{\gamma}_{p,\theta}} \leq N_1 \sup_{\bR^{d}_{+}}|a|\,
\|f\|_{H^{\gamma }_{p,\theta}}+ N_2\|f\|_{H^{\gamma-1}_{p,\theta}}
\sup_{\bR^{d}_{+}}\sup_{1\leq|\alpha|\leq\gamma}
|M^{|\alpha|}D^{\alpha}a |,
\end{equation}
where, obviously, one can take $N_1=1$ and  $N_2=0$ if $\gamma=0$.

(ii) if $\gamma$ is not integer, then
\begin{equation}
                                            \label{1.24.07}
\|af\|_{H^{\gamma}_{p,\theta}} \leq N  (\sup_{\bR^{d}_{+}}|a|)^{s}
(|a|^{(0)}_{|\gamma|_+})^{1-s} \|f\|_{H^{\gamma }_{p,\theta}},
\end{equation}
where $s:=1-\frac{|\gamma|}{{|\gamma|_+}} > 0$.

The same assertions hold true for $\ell_2$-valued $a$.
\end{lemma}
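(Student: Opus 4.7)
The strategy is to reduce each of the three estimates to the standard (unweighted) pointwise multiplier inequality on $H^{\gamma}_p(\bR^d)$ via the dyadic decomposition (\ref{100.10.03}), exploiting the fact that the weight $|a|^{(0)}_{|\gamma|_+}$ is precisely calibrated so that after the natural scaling $x\mapsto e^n x$ the function $a(e^n\cdot)$ has bounded H\"older-type norm on $\operatorname{supp}\zeta$ uniformly in $n\in\bZ$.

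To prove (\ref{8.19.05}), I would fix an auxiliary cutoff $\psi\in C^{\infty}_{0}(\bR_+)$ with $\psi\equiv 1$ on $\operatorname{supp}\zeta$ and factor $\zeta(x)a(e^n x)f(e^n x)=[\psi(x)a(e^n x)]\cdot[\zeta(x)f(e^n x)]$. Since $\operatorname{supp}\psi\subset[r_0,r_1]$ for some $0<r_0<r_1<\infty$, the chain rule combined with (\ref{eqn 5.6.2})--(\ref{eqn 5.6.3}) yields
\[
e^{n|\alpha|}|(D^\alpha a)(e^n x)|=(x^1)^{-|\alpha|}\bigl[(e^n x^1)^{|\alpha|}|(D^\alpha a)(e^n x)|\bigr]\leq N(\psi)\,[a]^{(0)}_{|\alpha|}
\]
for $|\alpha|\leq\lfloor|\gamma|_+\rfloor$, together with an analogous scale-invariant H\"older-seminorm bound at order $|\gamma|_+-\lfloor|\gamma|_+\rfloor$. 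Hence $\psi(\cdot)a(e^n\cdot)\in B^{|\gamma|_+}(\bR^d)$ with norm $\leq N|a|^{(0)}_{|\gamma|_+}$, uniformly in $n$. Applying the classical Bessel-potential multiplier estimate $\|bh\|_{H^{\gamma}_p}\leq N\|b\|_{B^{|\gamma|_+}}\|h\|_{H^{\gamma}_p}$ on $\bR^d$ (see, e.g., \cite{T}) with $b=\psi(\cdot)a(e^n\cdot)$ and $h=\zeta(\cdot)f(e^n\cdot)$, then raising to the $p$th power and summing against $e^{n\theta}$, yields (\ref{8.19.05}).

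For (i) with $\gamma=m\in\{0,1,2,\ldots\}$, I would use the pointwise equivalent description of $H^m_{p,\theta}$ together with the Leibniz identity
\[
(x^1)^{|\alpha|}|D^\alpha(af)|\leq\sum_{\beta\leq\alpha}\binom{\alpha}{\beta}\bigl((x^1)^{|\beta|}|D^\beta a|\bigr)\bigl((x^1)^{|\alpha-\beta|}|D^{\alpha-\beta}f|\bigr).
\]
The $\beta=0$ summand is dominated by $\sup|a|\cdot(x^1)^{|\alpha|}|D^\alpha f|$, producing $N_1\sup|a|\,\|f\|_{H^m_{p,\theta}}$ with $N_1=1$; each $|\beta|\geq 1$ summand involves at most $m-1$ derivatives of $f$ and is controlled by $\sup|M^{|\beta|}D^\beta a|\cdot\|f\|_{H^{m-1}_{p,\theta}}$, giving the $N_2$ term.

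For (ii), non-integer $\gamma$, I would use complex interpolation between the integer endpoints $m=\lfloor|\gamma|\rfloor$ and $m+1=\lceil|\gamma|\rceil$: at the lower endpoint the $L_p$-case of (\ref{1.24.06}) gives multiplier norm $\leq\sup|a|$, while at the upper endpoint (\ref{8.19.05}) gives multiplier norm $\leq N|a|^{(0)}_{|\gamma|_+}$. Log-convexity of multiplier norms along the Bessel-potential scale then produces the product $(\sup|a|)^s(|a|^{(0)}_{|\gamma|_+})^{1-s}$ with $s=1-|\gamma|/|\gamma|_+$. The main obstacle is verifying that $\{H^{\gamma}_{p,\theta}\}_{\gamma\in\bR}$ forms a complex-interpolation scale with uniform constants; I would address this by transferring the interpolation—again through $\zeta$-localization and scaling—to the classical interpolation of Bessel-potential spaces on $\bR^d$, for which the identity $[H^{\gamma_0}_p,H^{\gamma_1}_p]_s=H^{(1-s)\gamma_0+s\gamma_1}_p$ is standard. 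The $\ell_2$-valued version of all three estimates follows by the same argument applied componentwise.
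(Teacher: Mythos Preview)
The paper does not prove this lemma; it is one of three results quoted from \cite{KK2} without argument. Your approach to \eqref{8.19.05} and to (i) is the standard one and is correct: localise via $\psi\equiv1$ on $\operatorname{supp}\zeta$, observe that the weighted seminorms $[a]^{(0)}_k$ are precisely designed so that $\|\psi(\cdot)a(e^n\cdot)\|_{B^{|\gamma|_+}}\le N|a|^{(0)}_{|\gamma|_+}$ uniformly in $n$, apply the unweighted multiplier theorem on $\bR^d$, and sum over $n$; the Leibniz argument for integer $\gamma\ge0$ is likewise fine.

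Your interpolation for (ii), however, is misconfigured. Interpolating between $H^{m}_{p,\theta}$ and $H^{m+1}_{p,\theta}$ with $m=\lfloor|\gamma|\rfloor$ forces interpolation parameter $\lambda=|\gamma|-m$, so log-convexity would put the exponent $m+1-|\gamma|$ on $\sup|a|$, not $s=1-|\gamma|/|\gamma|_+=\varepsilon_0/(|\gamma|+\varepsilon_0)$. Moreover neither endpoint bound is what you assert: at level $m\ge1$ inequality \eqref{1.24.06} carries two terms, not just $\sup|a|$; and at level $m+1$ inequality \eqref{8.19.05} requires $|a|^{(0)}_{m+1}$, which for small $\varepsilon_0$ is not controlled by the assumed $|a|^{(0)}_{|\gamma|_+}$. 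The exponent $s$ in \eqref{1.24.07} corresponds instead to interpolation between level $0$ and level $|\gamma|_+$; the upper endpoint then demands the borderline multiplier bound $\|af\|_{H^{|\gamma|_+}_p}\le N\|a\|_{C^{|\gamma|_+}}\|f\|_{H^{|\gamma|_+}_p}$, which does not follow from the standard strict-inequality multiplier theorem you invoke for \eqref{8.19.05}. One must either justify this endpoint separately, or---more directly---first establish the refined unweighted inequality $\|bh\|_{H^{\gamma}_p(\bR^d)}\le N(\sup|b|)^{s}\|b\|_{C^{|\gamma|_+}}^{1-s}\|h\|_{H^{\gamma}_p(\bR^d)}$ on $\bR^d$ and then transfer it to $H^{\gamma}_{p,\theta}$ by the same $\zeta$-localisation you used for \eqref{8.19.05}.
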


{\bf{Proof of Theorem \ref{theorem half}}} \;\;We proceed as in
Theorem 2.16 of \cite{KK}, where the theorem is proved for single
equations. As usual, for simplicity, we assume $u_0 =0$ (see the proof of Theorem 5.1 in \cite{Kr99}).
Also  having  the method of continuity in mind, we convince
ourselves  that
 to prove the theorem  it  suffices to
show that there exists  $\kappa_{0}$  such that
 the a priori estimate (\ref{eqn 9.9.2}) holds
 given that the solution already exists and $\kappa
\leq\kappa_{0}$.
  We divide the
proof into $6$ cases. The reason for this is that if $\gamma$ is
not an integer we  use (\ref{1.24.07}) and if $\gamma$ is a
non-negative integer we use (\ref{1.24.06}),
  but if $\gamma$ is a negative integer we  use the
 somewhat different approaches used in
 \cite{KK}.

 {\bf Case 1}: $|\gamma|\not\in\{0,1,2,...\}$.
 Take the least integer $q\geq|\gamma|+4$. Also take an
$\varepsilon\in(0,1)$ which will be specified later, and take a
sequence of functions $\eta_{n}$, $n=1,2,...$ from Lemma \ref{lemma
11.14.1} corresponding to $\varepsilon,q$.
 Then by Lemma \ref{lemma 8.26.10}, we have
\begin{equation}
                                                \label{8.28.15}
\|M^{-1}u\|_{\bH^{\gamma+2}_{2,\theta}(T)}^{2} \leq
N\sum_{n=1}^{\infty}
\|M^{-1}u\eta^{2}_{n}\|_{\bH^{\gamma+2}_{2,\theta}(T)}^{2}.
\end{equation}
 For any $n$ let  $x_{n}$ be a point in $\text{supp}\,\eta_{n}$
and $a^{ij}_{kr,n}(t)=a^{ij}_{kr}(t,x_{n})$, $
\sigma^i_{kr,n,m}(t)=\sigma^i_{kr,m}(t,x_{n})$.
 From  (\ref{eqn main system}), we easily have
$$
 d(u^k\eta^{2}_{n})=
(a^{ij}_{kr,n}(u^r\eta^{2}_{n})_{ x^ix^j}+M^{-1}f^k_{n})dt +
(\sigma^{i}_{kr,n,m}(u^r\eta^{2}_{n})_{x^i}+ g^k_{n,m})\, dw^m_{t},
$$
where
$$
f^k_{n}=(a^{ij}_{kr}-a^{ij}_{kr,n}) \eta^{2}_{n} Mu^r_{x^ix^j}
 -2a^{ij}_{kr,n}M(\eta^{2}_{n})_{ x^i}u^r_{x^j}
-a^{ij}_{kr,n}M^{-1}u^rM^{2}(\eta^{2}_{n})_{ x^ix^j}
$$
$$
+\eta_{n}^{2}Mb^{i}_{kr}u^r_{x^{i}}
+\eta_{n}^{2}M^{2}c_{kr}M^{-1}u^r +Mf^k\eta^{2}_{n},
$$
$$
g^k_{n,m}=(\sigma^{i}_{kr,m}-\sigma^{i}_{kr,n,m})\eta^{2}_{n}u^r_{x^i}-\sigma^{i}_{kr,n,m}M^{-1}u^r
M(\eta^2_{n})_{x^i} + M\nu_{kr,m} M^{-1}u^r\eta^2_{n} +
g^k_m\eta^2_{n} .
$$

By Theorem \ref{theorem half-constant}, for each $n$,
\begin{equation}
                                                 \label{8.28.20}
\|M^{-1}u\eta^{2}_{n}\|_{\bH^{\gamma+2}_{2,\theta}(T)}^{2} \leq N
(\|f_{n}\|^{2}_{\bH^{\gamma}_{2,\theta}(T)}+
\|g_{n}\|^{2}_{\bH^{\gamma+1}_{2,\theta}(T,\ell_2)})
\end{equation}
and  by (\ref{1.24.07}),
\begin{equation}
                                                             \label{1.24.01}
\|(a^{ij}_{kr}-a^{ij}_{kr,n}) \eta^{2}_{n} Mu_{x^ix^j}\|
_{\bH^{\gamma}_{2,\theta}(T)} \leq N\| \eta _{n} Mu_{xx}\|
_{\bH^{\gamma}_{2,\theta}(T)} \sup_{\omega,t,x}|(a^{ij}_{kr}-
a^{ij}_{kr,n})\eta_{n}|^{s},
\end{equation}
 where $s >0$ is a constant depending only on $\gamma$ and $|\gamma|_+$.

By Lemma \ref{lemma 11.14.1} (ii), for each   $n$ and
$x,y\in\text{supp}\,\eta_{n}$
  we have
$|x-y|\leq N(\varepsilon)(x^{1}\wedge y^{1})$, where
$N(\varepsilon)=N(d,q,\varepsilon)$, and we can easily fix points
$x_i$ lying on the straight segment
 connecting $x$ and $y$ and including $x$ and $y$ so that
the number of points are not more than $N(\varepsilon)+2 \leq
3N(\varepsilon)$ and $|x_i-x_{i+1}|\leq x^1_{i} \wedge x^1_{i+1}$.
It follows from our assumptions
$$
\sup_{\omega,t,x}|(a^{ij}_{kr}-a^{ij}_{kr,n})\eta_{n}|
 \leq 3N(\varepsilon)\kappa.
$$
We substitute this  to (\ref{1.24.01})  and get
$$
\|(a^{ij}_{kr}-a^{ij}_{krn}) \eta^{2}_{n}
Mu^r_{x^ix^j}\|_{\bH^{\gamma}_{2,\theta}(T)} \leq N
N(\varepsilon)\kappa^{s}\| \eta _{n}
Mu_{xx}\|_{\bH^{\gamma}_{2,\theta}(T)}.
$$
Similarly,
$$
\| \eta^{2}_{n} Mb^{i}_{kr}u^r_{x^i}\|
_{\bH^{\gamma}_{2,\theta}(T)}+ \| \eta^{2}_{n}
M^{2}c_{kr}M^{-1}u^r\| _{\bH^{\gamma}_{2,\theta}(T)}+
\|(\sigma^{i}_{kr}-\sigma^{i}_{kr,n})\eta^{2}_{n}u^r_{x^i}\|_{\bH^{\gamma+1}_{2,\theta}(T,\ell_2)}
$$
$$
+ \|\eta^{2}_{n} M\nu_{kr}
M^{-1}u^r\|_{\bH^{\gamma+1}_{2,\theta}(T,\ell_2)} \leq N
N(\varepsilon)
\kappa^{s}\left(\|\eta_{n}u_{x}\|_{\bH^{\gamma+1}_{2,\theta}(T)}
+\|\eta_{n}M^{-1}u\| _{\bH^{\gamma+1}_{2,\theta}(T)}\right).
$$
Coming back to (\ref{8.28.20}) and (\ref{8.28.15}) and using Lemma
\ref{lemma 8.26.10}, we conclude
$$
\|M^{-1}u\|_{\bH^{\gamma+2}_{2,\theta}(T)}^{2} \leq NN(\varepsilon)
\kappa^{2s}\left(\|M u_{xx}\|_{\bH^{\gamma}_{2,\theta}(T)}^{2} +
\|u_{x}\|_{\bH^{\gamma+1}_{2,\theta}(T)}^{2} +
\|M^{-1}u\|_{\bH^{\gamma+1}_{2,\theta}(T)}^{2}\right)
$$
\begin{equation}
                                               \label{11.19.2}
+NC^{2}\left(\|u_{x}\|_{\bH^{\gamma }_{2,\theta}(T)}^{2}
+\|M^{-1}u\|_{\bH^{\gamma+1}_{2,\theta} (T)}^{2}\right)+N \left(
\|Mf\|_{\bH^{\gamma }_{2,\theta}(T)}^{2}+
\|g\|^2_{\bH^{\gamma+1}_{2,\theta}(T,\ell_2)}\right),
\end{equation}
where
$$
C=\sup_{\bR^{d}_{+}}\sup_{|\alpha|\leq q-2}
\sum_{n=1}^{\infty}M^{|\alpha|}(|D^{\alpha}(M(\eta_{n}^{2})_{x})|
+|D^{\alpha}(M^{2}(\eta_{n}^{2})_{xx})|).
$$
By construction, we have $C\leq N\varepsilon$. Furthermore (see
Lemma \ref{lemma 2})
\begin{equation}
                                               \label{11.19.1}
\|u_{x}\|_{H^{\gamma+1}_{2,\theta}} \leq
N\|M^{-1}u\|_{H^{\gamma+2}_{2,\theta}},\quad
\|Mu_{xx}\|_{H^{\gamma}_{2,\theta}} \leq
N\|M^{-1}u\|_{H^{\gamma+2}_{2,\theta}}\;\;.
\end{equation}

Hence (\ref{11.19.2}) yields
$$
\|M^{-1}u\|_{\bH^{\gamma+2}_{2,\theta}(T)}^{2} \leq
N_{1}(N(\varepsilon)\kappa^{2s}+\varepsilon^{2})
\|M^{-1}u\|_{\bH^{\gamma+2}_{2,\theta}(T)}^{2} +N \left(
\|Mf\|_{\bH^{\gamma }_{2,\theta}(T)}^{2}+
 \|g\|^2_{\bH^{\gamma+1}_{2,\theta}(T,\ell_2)} \right).
$$
Finally,  to get a priori estimate (\ref{eqn 9.9.2}) it's enough to
choose first
 $\varepsilon$ and then $\kappa_{0}$
so that $N_{1}(N(\varepsilon)\kappa^{2s} +\varepsilon^{2})\leq1/2$
for $\kappa\leq\kappa_{0}$.

 {\bf Case 2}: $\gamma =0$.
 Proceed as in Case 1 with
$\varepsilon=1$ and arrive at (\ref{8.28.20}) which is
$$
\|M^{-1}u\eta^{2}_{n}\|_{\bH^{2}_{2,\theta}(T)}^{2} \leq
N\left(\|f_{n}\|^{2}_{\bL_{2,\theta}(T)}+
\|g_{n}\|^{2}_{\bH^{1}_{2,\theta}(T,\ell_2)}\right).
$$

Notice that (\ref{1.24.01}) holds with $s=1$ (since $\gamma=0$).
Also by
 (\ref{1.24.06}),
$$
\|(\sigma^{i}_{kr}-\sigma^{i}_{kr,n})\eta^2_{n}u^r_{x^i}\|_{\bH^{1}_{2,\theta}(T,\ell_2)}
\leq N \sup_{\omega,t,x}
|(\sigma^{i}_{kr}-\sigma^{i}_{kr,n})\eta_{n}|_{\ell_2}
\|\eta_{n}u_x\|_{\bH^{1}_{2,\theta}(T)} +
 N \|\eta_{n}u_x\|_{\bL_{2,\theta}(T)}
$$
\begin{equation}
                                                         \label{1.24.02}
\leq  N \kappa \|\eta_{n}u_x\|_{\bH^{1}_{2,\theta}(T)} + N
\|\eta_{n}u_x\|_{\bL_{2,\theta}(T)}.
\end{equation}

From this point, by following the arguments in Case 1, one gets
 $$
\|M^{-1}u\|_{\bH^2_{2,\theta}(T)} \leq N_{1} \kappa
\|M^{-1}u\|_{\bH^2_{2,\theta}(T)}
 + N \|M^{-1}u\|_{\bH^1_{2,\theta}(T)} +
N\|Mf\|_{\bL_{2,\theta}(T)}+ N\|g\|_{\bH^1_{2,\theta}(T)}\;\;.
$$
Thus, if $N_{1} \kappa_0 \leq 1/2 $ and $\kappa \leq \kappa_0$,
then we have
\begin{equation}
                                                            \label{1.24.03}
\|M^{-1}u\|_{\bH^2_{2,\theta}(T)} \leq  N
\|M^{-1}u\|_{\bH^1_{2,\theta}(T)} + N\|Mf\|_{\bL_{2,\theta}(T)}+
N\|g\|_{\bH^1_{2,\theta}(T,\ell_2)}\;.
\end{equation}

Next, if necessary, by reducing $\kappa_0$ (note that we are free to
do this)
 we will estimate the norm
$\|M^{-1}u\|_{\bH^1_{2,\theta}(T)}$. Take an $\varepsilon \in (0,1)$
which will be specified later and proceed as in Case 1 and write
(\ref{8.28.15}) and (\ref{8.28.20}) for $\gamma=-1$. The latter is
$$
\|M^{-1}u\eta^2_{n}\|^2_{\bH^1_{2,\theta}(T)} \leq N\left(
\|f_n\|^2_{\bH^{-1}_{2,\theta}(T)}+
\|g_n\|^2_{\bL_{2,\theta}(T,\ell_2)}\right)\;.
$$
Using the fact $\|f_n\|_{\bH^{-1}_{2,\theta}(T)} \leq
\|f_n\|_{\bL_{2,\theta}(T)}$ and the previous arguments, one obtains
$$
\|M^{-1}u\|^2_{\bH^{1}_{2,\theta}(T)} \leq NN^2(\varepsilon)
\kappa^2 \left(\|Mu_{xx}\|^2_{\bL_{2,\theta}(T)}+
\|u_x\|^2_{\bL_{2,\theta}(T)} +
\|M^{-1}u\|^2_{\bL_{2,\theta}(T)}\right)
$$
$$
+ N C^2 \left(\|u_x\|^2_{\bL_{2,\theta}(T)}+
\|M^{-1}u\|^2_{\bL_{2,\theta}(T)}\right) + N
\left(\|Mf\|^2_{\bL_{2,\theta}(T)}+
\|g\|^2_{\bL_{2,\theta}(T,\ell_2)}\right),
$$
where $C$ is introduced after (\ref{11.19.2}). By using
(\ref{11.19.1}) we get
$$
\|M^{-1}u\|^2_{\bH^{1}_{2,\theta}(T)} \leq N (
N^2(\varepsilon)\kappa^{2}+\varepsilon^2)\|M^{-1}u\|^2_{\bH^2_{2,\theta}(T)}
+ N \left(\|Mf\|^2_{\bL_{2,\theta}(t)}+
\|g\|^2_{\bL_{2,\theta}(T,\ell_2)}\right).
$$
Finally, by substituting this into (\ref{1.24.03}) and then choosing
$\varepsilon$ and then $\kappa_0$ properly, one gets the desired
estimate.

{\bf{Case 3}}. $\gamma \in\{1,2,...\}$. Take $\kappa_0$ from Case 2 and assume $\kappa\leq \kappa_0$. Proceed as in Case 2 with $\varepsilon=1$. By  (\ref{1.24.06}),
$$
\|(a^{ij}_{kr}-a^{ij}_{kr,n}) \eta^{2}_{n} Mu_{x^ix^j}\|
_{\bH^{\gamma}_{2,\theta}(T)} \leq N \kappa \| \eta _{n} Mu_{xx}\|_{\bH^{\gamma}_{2,\theta}(T)}+ N\| \eta _{n} Mu_{xx}\|_{\bH^{\gamma-1}_{2,\theta}(T)} ,
$$
Similarly,
\begin{eqnarray*}
\|f_n\|_{\bH^{\gamma}_{2,\theta}(T)}+\|g_n\|_{\bH^{\gamma+1}_{2,\theta}(T,\ell_2)}&\leq& N \kappa \left(\|\eta_n Mu_{xx}\|_{\bH^{\gamma}_{2,\theta}(T)}+\|\eta_{n}u_{x}\|_{\bH^{\gamma+1}_{2,\theta}(T)}
+\|\eta_{n}M^{-1}u\| _{\bH^{\gamma+1}_{2,\theta}(T)}\right)
\\
&+& N \left(\|\eta_n Mu_{xx}\|_{\bH^{\gamma-1}_{2,\theta}(T)}+\|\eta_{n}u_{x}\|_{\bH^{\gamma}_{2,\theta}(T)}
+\|\eta_{n}M^{-1}u\| _{\bH^{\gamma}_{2,\theta}(T)}\right).
\end{eqnarray*}
This easily leads to
$$
\|M^{-1}u\|_{\bH^{\gamma+2}_{2,\theta}(T)}\leq N_2\kappa \|M^{-1}u\|_{\bH^{\gamma+2}_{2,\theta}(T)}+N\|M^{-1}u\|_{\bH^{\gamma+1}_{2,\theta}(T)} + N \|Mf\|_{\bH^{\gamma}_{2,\theta}(T)}+N\|g\|_{\bH^{\gamma+1}_{2,\theta}(T,\ell_2)}.
$$
Now additionally  assume $N_2\kappa\leq 1/4$. Then it is enough  to use the  interpolation inequality (\cite{kr99-1}, Theorem 2.6)
 $$\|M^{-1}u\|_{H^{\gamma+1}_{2,\theta}}\leq \varepsilon \|M^{-1}u\|_{H^{\gamma+2}_{2,\theta}}+N(\varepsilon,\gamma)\|M^{-1}u\|_{H^2_{2,\theta}}$$
  and the results in Case 2.

{{\bf Case 4}: $\gamma=-1$.   We temporarily assume that (\ref{eqn
9.11.1})
 holds with $\gamma=1$.  In this case
we prove the theorem directly without depending on an a priori
estimate. Take $\kappa_0$  which corresponds to the case $\gamma=0$.
Assume $\kappa\leq \kappa_0$, then  the operator $\cR$ which maps
the couples $(f,g) \in M^{-1}\bL_{2,\theta}(T) \times
\bH^{1}_{2,\theta}(T,\ell_2)$ into the solutions $u \in
\frH^2_{2,\theta}(T)$ of the problem (\ref{eqn main system}) defined
on $\Omega\times [0,T]\times \bR^d_+$ with zero initial data is
well-defined and bounded.

Now take $(f,g)\in M^{-1}\bH^{-1}_{2,\theta}(T)\times
\bL_{2,\theta}(T,\ell_2)$. By Corollary
 2.12 in \cite{kr99}  we have the following representations
\begin{equation}
                                                            \label{1.25.04}
f=MD_{\ell}f^{\ell}, \quad g= MD_{\ell}g^{\ell},
\end{equation}
where $f^{\ell}=(f^{\ell,1},\cdots,f^{\ell,d_1}) \in
M^{-1}\bL_{2,\theta}(T),
g^{\ell}=(g^{\ell,1},\cdots,g^{\ell,d_1})\in
\bH^1_{2,\theta}(T,\ell_2), \ell=1,2,...,d$ and
\begin{equation}
                                                                 \label{1.25.05}
\sum_{\ell=1}^{d}\|Mf^{\ell}\|_{\bL_{2,\theta}(T)} \leq N
\|Mf\|_{\bH^{-1}_{2,\theta}(T)}, \quad
\sum_{\ell=1}^{d}\|g^{\ell}\|_{\bH^{1}_{2,\theta}(T,\ell_2)}
  \leq N \|g\|_{\bL_{2,\theta}(T,\ell_2)}.
\end{equation}

Next denote $v^{\ell}=(v^{\ell,1},\cdots v^{\ell,d_1})=\cR
(f^{\ell},g^{\ell})$ and
$\bar{v}=(\bar{v}^1,\cdots,\bar{v}^{d_1})=\sum_{\ell=1}^d
MD_{\ell}v^{\ell}$. Then by (\ref{11.19.1}), $\bar{v}$ is in $
M\bH^{1}_{2,\theta}(T)$ and satisfies
$$
d\bar{v}^k=(a^{ij}_{kr}\bar{v}^r_{x^i x^j}+
b^{i}_{kr}\bar{v}^r_{x^i}+c_{kr}\bar{v}^r+f^k+\bar{f}^k)\, dt +
(\sigma^{i}_{kr,m}\bar{v}^r_{x^i}+ \nu_{kr,m}\bar{v}^r+g^{k}_m+
\bar{g}^k_m)\, dw^m_t,
$$
where
$$
\bar{f}^k=(MD_{\ell}a^{ij}_{kr})v^{\ell,r}_{x^i
x^j}-2a^{i1}_{kr}v^{\ell,r}_{x^{\ell}x^i}+ (M^2
D_{\ell}b^{i}_{kr})M^{-1}v^{\ell,r}_{x^i}- Mb^{1}_{kr}
M^{-1}v^{\ell,r}_{x^{\ell}} + (M^{3}D_{\ell}c_{kr})M^{-2}v^{\ell},
$$
$$
\bar{g}^k = (M D_{\ell}\sigma^{i}_{kr}) v^{\ell,r}_{x^i} -
\sigma^{1}_{kr}v^{\ell,r}_{x^{\ell}} + (M^2 D_{\ell} \nu_{kr})
M^{-1}v^{\ell,r}.
$$

By assumptions one can easily check  that  $|\cdot|^{(0)}_{0}$-norm
of $MD_{\ell}a^{ij}_{kr}$, $M^2D_{\ell}b^{i}_{kr}$,
$M^3D_{\ell}c_{kr}$ and $|\cdot|^{(0)}_{1}$- norm of
$MD_{\ell}\sigma^i_{kr}$, $M^2D_{\ell}\nu_{kr}$ are finite.
Therefore
$$
M\bar{f}\in \bL_{2,\theta}(T),\quad \bar{g}\in
\bH^{1}_{2,\theta}(T,\ell_2).
$$
Finally we define $\bar{u}= \cR(\bar{f},\bar{g})$ and $u:= \bar{v}-
\bar{u}$. Then $u \in \frH^{1}_{2,\theta}(T)$ satisfies (\ref{eqn
main system}) and the a priori estimate follows from the formulas
defining $\bar{u}$ and $\bar{v}$.

Next, we  prove the uniqueness of solutions. Let $\kappa\leq
\kappa_0$ with $\kappa_0$ found above for the case $\gamma=0$ and
assume
 $u \in \frH^{1}_{2,\theta}(T)$ satisfies (\ref{eqn main system})
 with $f=0,g^k=0$ and $u_0=0$. Since we already have
the uniqueness in the space $\frH^{2}_{2,\theta}(T)$, to show
$u\equiv 0$ we only need to show $u\in \frH^2_{2,\theta}(T)$. Take
$\eta_{n}$ from Lemma \ref{lemma 11.14.1} corresponding to
$\varepsilon =1$. From (\ref{eqn main system}) one can write the
system for $\eta_n u$ for each $n$ and get
$$
d(\eta_{n}u^k)=\left( a^{ij}_{kr}(\eta_{n}u^r)_{x^i x^j}+
b^{i}_{kr}(\eta_{n}u^r)_{x^i}+ c_{kr}(\eta_n u^r) + \tilde{f}^k_n
\right) \,dt
$$
$$
 + \left(\sigma^{i}_{kr,m}(\eta_{n}u^r)_{x^i}+ \nu_{kr,m}(\eta_{n}u^r)
  + \tilde{g}^k_{n,m} \right)\,dw^m_t,
$$
where
$$
\tilde{f}^k_n=-2a^{ij}_{kr}\eta_{n
x^i}u^r_{x^j}-(a^{ij}_{kr}\eta_{nx^i x^j}+
b^{i}_{kr}\eta_{nx^i})u^r, \quad \tilde{g}^k_{n,m}=
-\sigma^{i}_{kr,m} (\eta_n)_{x^i}u^r.
$$
Since $u\in M\bH^{1}_{2,\theta}(T)$ and $\eta_n$ has compact
support, we easily have $(\tilde{f},\tilde{g}) \in \bL_{2}(T)\times
\bH^1_2(T,\ell_2)$.  Also the above system will not change if we
arbitrarily change
$a^{ij}_{kr},b^{i}_{kr},c_{kr},\sigma^i_{kr},\nu_{kr}$ outside of
the support of $\eta_n$. Therefore using Theorem \ref{thm 2}, one
easily
 concludes that
  $\eta_n u \in \bH^2_2(T)$ and hence
  $M^{-1}\eta_{n}u\in \bH^2_{2,\theta}(T),
\eta_n u \in \frH^2_{2,\theta}(T)$.
  Then finally by using (\ref{eqn 9.9.2}) (which we have for $\gamma=0$)
  and Lemma \ref{lemma 8.26.10} one obtains
  $\|M^{-1}u\|_{\bH^2_{2,\theta}(T)}< \infty$,
that is, $u \in \frH^2_{2,\theta}(T)$.

{{\bf Case 5}: $\gamma=-1$ with no additional assumptions}. To prove
the a priori estimate we use the results of Case 3.  Fix a
non-negative smooth function $\phi \in C^{\infty}_{0}(B_{1/2}(0))$
with a unit integral. Define
$$
\bar{\sigma}(x)= \int \sigma(y)(x^{1})^{-d}\phi(\frac{x-y}{x^1})\,dy,
$$
and define $\bar{\nu}$ similarly. Observe that
$$
|\bar{\sigma}-\sigma| \leq \kappa, \quad |M\bar{\nu}| \leq 2 \kappa.
$$

Also using the fact $x^1 \leq 2(x^1-x^1z^1) \leq 4 x^1$ for
$|z^1|\leq 1/2$,
 one can easily check
that there is a constant $N_0< \infty$ such that
$$
|\bar{\sigma}|^{(0)}_{2} + |\bar{\nu}|^{(1)}_{2} < N_0.
$$
For instance, let  $i,j \geq 2$, and  $\delta_{1\ell}=1$ if $\ell=1$
and $\delta_{1\ell}=0$ otherwise,  then
$$
x^1 \bar{\sigma}_{x^1}(x)= \int_{|z|\leq 1/2}\sigma(x-x^1z)
[-d\phi(z)+\phi_{x^{\ell}}(z)\cdot (\delta_{1\ell}-z^{\ell})]\,dz,
$$
$$
(x^1)^2 \bar{\nu}_{x^1}(x)=\int_{|z|\leq 1/2} x^1 \nu (x-x^1z)
[-d\phi(z)+\phi_{x^{\ell}}(z)\cdot (\delta_{1\ell}-z^{\ell})]\,dz,
$$
$$
(x^1)^2 \bar{\sigma}_{x^i x^j}(x)=
 \int_{|z|\leq 1/2} \sigma(x-x^1z)\phi_{x^i x^j}(z)dz,
$$
$$
  (x^1)^3 \bar{\nu}_{x^i x^j}(x)=
 \int_{|z|\leq 1/2} x^1 \nu (x-x^1z)\phi_{x^i x^j}(z)dz,
$$
and therefore it is obvious that the  functions  above  are bounded.
Also, all other cases  can be considered similarly.

Take $(f,g) \in M^{-1}\bH^{-1}_{2,\theta}(T)\times
\bL_{2,\theta}(T,\ell_2)$ and let $u \in \frH^{1}_{2,\theta}(T)$ be
a solution of (\ref{eqn main system}) with zero initial data. Then
$$
du^k=(a^{ij}_{kr}u^r_{x^i x^j}+ b^i_{kr} u^r_{x^i}+c_{kr}u^r
+f^k)\,dt +( \bar{\sigma}^{i}_{kr,m}u^r_{x^i}+\bar{\nu}_{kr,m}u^r +
\bar{g}^k_m) \, dw^m_t,
$$
where  $\bar{g}^k=g^k+(\sigma^i_{kr}- \bar{\sigma}^i_{kr})u^r_{x^i}+
(\nu_{kr}-\bar{\nu}_{kr})u^r$. Note
\begin{equation}
                                                            \label{03.29.11}
\|\bar{g}\|_{\bL_{2,\theta}(T,\ell_2)}\leq
\|g\|_{\bL_{2,\theta}(T,\ell_2)}+ \kappa\|u_x\|_{\bL_{2,\theta}(T)}+
3\kappa\|M^{-1}u\|_{\bL_{2,\theta}(T)}.
\end{equation}
 Thus, by the results of Case 4,   if $\kappa \leq
\kappa_0$, then
\begin{eqnarray}
\|M^{-1}u\|_{\bH^{1}_{2,\theta}(T)} &\leq& N
 \left(\|Mf\|_{\bH^{-1}_{2,\theta}(T)}+\|\bar{g}\|_{\bL_{2,\theta}(T,\ell_2)}\right)\nonumber\\
&\leq&
N_{1}\left(\|Mf\|_{\bH^{-1}_{2,\theta}(T)}+\|g\|_{\bL_{2,\theta}(T,\ell_2)}
+\kappa \|M^{-1}u\|_{\bH^1_{2,\theta}(T)}\right), \label{1.27.02}
\end{eqnarray}
where the second inequality comes from  (\ref{03.29.11}) and  (\ref{11.19.1}). Finally we
assume
$$
\kappa \leq \kappa_0  \wedge (2 N_{1})^{-1}.
$$
Then   (\ref{1.27.02}) yields
$$
\|M^{-1}u\|_{\bH^1_{2,\theta}(T)}\leq
2N_{1}\left(\|Mf\|_{\bH^{-1}_{2,\theta}(T)}+\|g\|_{\bL_{2,\theta}(T,\ell_2)}\right).
$$
Thus we get the desired result for $\gamma=-1$.

{{\bf Case 6}: $\gamma=-2,-3,-4,...$}.  In this case it is enough to
repeat the processes in Case 4, but since $|\gamma|\geq |\gamma+2|$,
additional smoothness assumption on the coefficients is unnecessary.
The theorem is proved. $\Box$

\mysection{The system with bounded $C^1$-domain $\cO$}\label{main
section}

\begin{assumption}
                                         \label{assumption domain}

The bounded domain $\cO$  is of class $C^{1}_{u}$. In other words,
for any $x_0 \in \partial \cO$, there exist constants $r_0,
K_0\in(0,\infty) $ and  a one-to-one continuously differentiable
mapping $\Psi$ of
 $B_{r_0}(x_0)$ onto a domain $J\subset\bR^d$ such that

(i) $J_+:=\Psi(B_{r_0}(x_0) \cap \cO) \subset \bR^d_+$ and
$\Psi(x_0)=0$;

(ii)  $\Psi(B_{r_0}(x_0) \cap \partial \cO)= J \cap \{y\in
\bR^d:y^1=0 \}$;

(iii) $\|\Psi\|_{C^{1}(B_{r_0}(x_0))}  \leq K_0 $ and
$|\Psi^{-1}(y_1)-\Psi^{-1}(y_2)| \leq K_0 |y_1 -y_2|$ for any $y_i
\in J$;

(iv)   $\Psi_{x}$ is uniformly continuous in for $B_{r_{0}}(x_{0})$.
\end{assumption}

To proceed further we introduce some well known results from
\cite{GH} and \cite{KK2} (also, see \cite{La} for the details).

\begin{lemma}
                                           \label{lemma 10.3.1}
Let the domain $\cO$ be of class $C^{1}_{u}$. Then

(i) there is a bounded real-valued function $\psi$ defined in
$\bar{\cO} $, the closure of $\cO$,  such that the functions $\psi(x)$ and
$\rho(x):=\text{dist}(x,\partial \cO)$ are comparable. In other
words, $N^{-1}\rho(x) \leq \psi(x) \leq N\rho(x)$ with some constant
 $N$ independent of $x$,

 (ii) for any  multi-index $\alpha$,
\begin{equation}
                                                             \label{03.04.01}
\sup_{\cO} \psi ^{|\alpha|}(x)|D^{\alpha}\psi_{x}(x)| <\infty.
\end{equation}

\end{lemma}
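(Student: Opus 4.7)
The plan is to construct $\psi$ as a Whitney-type regularization of $\rho$, a standard technique for producing a smooth proxy of the distance function with the correct scaling of derivatives. First I would fix a Whitney decomposition of $\cO$ into a locally finite family of closed dyadic cubes $\{Q_j\}$ whose diameters $d_j:=\operatorname{diam}(Q_j)$ satisfy $d_j\leq \rho(x)\leq 4\sqrt{d}\,d_j$ for every $x\in Q_j$ (see Stein, \emph{Singular Integrals}, Ch.~VI). Slightly enlarge each cube to $Q^{*}_j$ so that the family $\{Q^{*}_j\}$ still has bounded overlap and still satisfies $d_j\sim\rho(x)$ on $Q^{*}_j$, and then pick a partition of unity $\{\eta_j\}\subset C^{\infty}_0(\cO)$ subordinate to $\{Q^{*}_j\}$ by translating and scaling a single fixed smooth bump on the unit cube, so that automatically $|D^{\alpha}\eta_j|\leq C_{\alpha}d_j^{-|\alpha|}$ for every multi-index $\alpha$.

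Then I would set $\psi(x):=\sum_{j}d_j\eta_j(x)$. For (i), at any $x\in\cO$ only boundedly many $\eta_j$ are nonzero and whenever $\eta_j(x)\neq 0$ we have $d_j\sim\rho(x)$; since $\sum_j\eta_j\equiv 1$, this yields $N^{-1}\rho(x)\leq\psi(x)\leq N\rho(x)$, and $\psi$ is bounded because $\rho$ is bounded on the bounded set $\cO$.

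For (ii), differentiating termwise gives $D^{\alpha}\psi_x(x)=\sum_{j}d_j\,D^{\alpha}(\eta_j)_x(x)$, which at each $x$ is a finite sum of boundedly many terms, each bounded by $d_j\cdot C_{|\alpha|+1}d_j^{-(|\alpha|+1)}=C_{|\alpha|+1}d_j^{-|\alpha|}$. Since $d_j\sim \rho(x)\sim \psi(x)$ on every $\operatorname{supp}\eta_j$ containing $x$, multiplying by $\psi^{|\alpha|}(x)$ produces a uniform bound, which is exactly (\ref{03.04.01}).

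The only step requiring genuine care is the choice of enlarged cubes $Q^{*}_j$ in such a way as to preserve simultaneously the bounded-overlap property of $\{Q^{*}_j\}$ and the comparability $d_j\sim\rho(x)$ on $Q^{*}_j$; once this is arranged, the derivative bounds $|D^{\alpha}\eta_j|\leq C_{\alpha}d_j^{-|\alpha|}$ follow by a fixed change of variables from a single bump on the unit cube. Note that the $C^1_u$ regularity of $\partial\cO$ furnished by Assumption \ref{assumption domain} is not actually used in this lemma --- the construction is purely metric and works for any open set of finite diameter --- so the result can safely be invoked, as the authors do, as a consequence of the standard regularized-distance theorem from \cite{GH} and \cite{KK2}.
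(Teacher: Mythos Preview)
Your construction is correct. The paper does not actually prove this lemma; it is stated without proof and attributed to \cite{GH} and \cite{KK2} (with \cite{La} for details) as a known result about the regularized distance. Your Whitney--Stein construction is one of the standard ways to build such a $\psi$, and every step you outline goes through: the bounded-overlap property of the enlarged cubes and the scaled partition of unity give exactly the derivative bounds $|D^{\alpha}\eta_j|\leq C_{\alpha}d_j^{-|\alpha|}$, and the comparability $d_j\sim\rho(x)\sim\psi(x)$ on $\operatorname{supp}\eta_j$ then yields both (i) and (ii) immediately.

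Your closing observation is also correct and worth keeping: the $C^{1}_{u}$ hypothesis plays no role in the existence of $\psi$; it is needed later in the paper (for the flattening maps $\Psi$ in the proof of Theorem~\ref{main theorem on domain}), not here. The cited Gilbarg--H\"ormander construction proceeds somewhat differently (via an implicit-function/mollification argument rather than an explicit Whitney sum), but the output satisfies the same two properties, so either route is acceptable.
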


Now, we take the Banach spaces introduced in \cite{KK2} and
\cite{Lo2}.
 Let $\zeta\in C^{\infty}_{0}(\bR_{+})$
be a   function satisfying (\ref{eqn 5.6.5}). For $x\in \cO$ and
$n\in\bZ=\{0,\pm1,...\}$ we define
$$
\zeta_{n}(x)=\zeta(e^{n}\psi(x)).
$$
Then  we have $\sum_{n}\zeta_{n}\geq c$ in $\cO$ and
\begin{equation*}
\zeta_n \in C^{\infty}_0(\cO), \quad |D^m \zeta_n(x)|\leq
N(m)e^{mn}.
\end{equation*}
For $\theta,\gamma \in \bR$, let $H^{\gamma}_{p,\theta}(\cO)$ be the
set of all distributions $u=(u^1,u^2,\cdots u^{d_1})$  on $\cO$ such
that
\begin{equation}
                                                 \label{10.10.03}
\|u\|_{H^{\gamma}_{p,\theta}(\cO)}^{p}:= \sum_{n\in\bZ} e^{n\theta}
\|\zeta_{-n}(e^{n} \cdot)u(e^{n} \cdot)\|^p_{H^{\gamma}_p} < \infty.
\end{equation}
 If $g=(g^1,g^2,\ldots,g^{d_1})$ and each $g^k$ is an
 $\ell_2$-valued function,
 then we define
$$
\|g\|_{H^{\gamma}_{p,\theta}(\cO,\ell_2)}^{p}= \sum_{n\in\bZ}
e^{n\theta} \|\zeta_{-n}(e^{n} \cdot)g(e^{n}
\cdot)\|^p_{H^{\gamma}_p(\ell_2)}.
$$
It is known (see, for instance, \cite{Lo2}) that up to equivalent
norms the space $H^{\gamma}_{p,\theta}(\cO)$ is independent of the
choice of $\zeta$ and $\psi$. Moreover if $\gamma$ is a non-negative
integer, then
$$
H^{\gamma}_{p,\theta}(\cO)=\{u:u,\psi
Du,\cdots,\psi^{|\alpha|}D^{\alpha}u\in
L_p(\:\cO,\psi^{\theta-d}dx\:),\;|\alpha|\le\gamma\},
$$
$$
\|u\|^p_{H^{\gamma}_{p,\theta}}(\cO)\sim \sum_{|\alpha|\le\gamma}
\int_{\cO}|\psi^{|\alpha|} D^{\alpha}u(x)|^p \psi^{\theta-d} \,dx.
$$

Denote $\rho(x,y)=\rho(x)\wedge \rho(y)$ and
$\psi(x,y)=\psi(x)\wedge \psi(y)$. For
  $n \in\bZ$, $\mu \in(0,1]$
 and $k=0,1,2,...$, we define
$$
|u|_{C}=\sup_{\cO}|u(x)|, \quad [u]_{C^{\mu}}=\sup_{x\neq
y}\frac{|u(x)-u(y)|}{|x-y|^{\mu}}.
$$
\begin{equation}
                           \label{eqn 5.6.2}
[u]^{(n)}_{k}=[u]^{(n)}_{k,\cO} =\sup_{\substack{x\in \cO\\
|\beta|=k}}\psi^{k+n}(x)|D^{\beta}u(x)|,
\end{equation}
\begin{equation}
                              \label{eqn 5.6.3}
[u]^{(n)}_{k+\mu}=[u]^{(n)}_{k+\mu,\cO} =\sup_{\substack{x,y\in \cO
\\ |\beta|=k}}
\psi^{k+\mu+n}(x,y)\frac{|D^{\beta}u(x)-D^{\beta}u(y)|}
{|x-y|^{\mu}},
\end{equation}
$$
|u|^{(n)}_{k}=|u|^{(n)}_{k,\cO}=\sum_{j=0}^{k}[u]^{(n)}_{j,\cO},
\quad |u|^{(n)}_{k+\mu}=
 |u|^{(n)}_{k+\mu,\cO}=|u|^{(n)}_{k, \cO}+
[u]^{(n)}_{k+\mu,\cO}\;.
$$
Remember that in case $\cO=\bR^d_+$, to define
$|u|^{(n)}_{k}=|u|^{(n)}_{k,\bR_+}$, we used   $\rho(x)(=x^1)$ and
$\rho(x)\wedge \rho(y)$ in place of $\psi(x)$ and $\psi(x,y)$
respectively in (\ref{eqn 5.6.2}) and (\ref{eqn 5.6.3}).

Below we collect some other properties of the  spaces
$H^{\gamma}_{p,\theta}(\cO)$.

\begin{lemma}(\cite{Lo2})
              \label{lemma 1}
(i) The assertions (i)-(iii) in Lemma \ref{lemma 2} hold if one
formally replace
     $M$ and $H^{\gamma}_{p,\theta}$ by $\psi$ and
     $H^{\gamma}_{p,\theta}(\cO)$, respectively.

(ii) There is a constant $N=N(\gamma,|\gamma|_+,p,\theta)>0$ so that
$$
\|af\|_{H^{\gamma}_{p,\theta}(\cO)}\leq N
|a|^{(0)}_{|\gamma|_+}\|f\|_{H^{\gamma}_{p,\theta}(\cO)}.
$$
\end{lemma}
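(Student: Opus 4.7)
\medskip

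\noindent\textbf{Proof plan.} The approach is to reduce everything to the half-space results in Lemma \ref{lemma 2} and the multiplier estimate in Lemma \ref{lemma 1.2.2} by exploiting the dyadic decomposition built into the definition (\ref{10.10.03}) of $H^{\gamma}_{p,\theta}(\cO)$. Set $v_n(x) := \zeta_{-n}(e^{n}x)\,u(e^{n}x)$, so that $\|u\|_{H^{\gamma}_{p,\theta}(\cO)}^{p}=\sum_{n}e^{n\theta}\|v_n\|_{H^{\gamma}_p}^{p}$. By Lemma \ref{lemma 10.3.1} we have $\psi \sim \rho$ and $\psi^{|\alpha|}|D^{\alpha}\psi| \lesssim 1$, so on the support of $\zeta_{-n}$ the weight $\psi(x)$ is comparable to $e^{-n}$ and its derivatives behave well. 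This is the crucial pointwise fact that translates $\psi$-multiplication and $\psi D$-action into scaling by $e^{-n}$ on each $v_n$.

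\medskip

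\noindent For part (i), the analogue of Lemma \ref{lemma 2}(ii) (that $\psi^{\mu}H^{\gamma}_{p,\theta+\mu p}(\cO) = H^{\gamma}_{p,\theta}(\cO)$) follows by noting that on the support of $\zeta_{-n}$ one has $\psi^{\mu} \sim e^{-n\mu}$, and the difference $\psi^{\mu}u - e^{-n\mu}u$ localized by $\zeta_{-n}$ is controlled by derivatives of $\psi^{\mu}$ which enjoy the bound (\ref{03.04.01}). Thus $e^{n(\theta+\mu p)}\|\zeta_{-n}(e^{n}\cdot)\psi^{\mu}(e^{n}\cdot)u(e^{n}\cdot)\|^p_{H^{\gamma}_p}$ is comparable to $e^{n\theta}\|v_n\|^p_{H^{\gamma}_p}$, up to terms that are absorbable by a variant of (\ref{eqn 5.6.1}). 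The analogue of Lemma \ref{lemma 2}(iii) (that $\psi D,\;D\psi$ are bounded $H^{\gamma}_{p,\theta}(\cO) \to H^{\gamma-1}_{p,\theta}(\cO)$) is then proved by writing $\zeta_{-n}(e^{n}\cdot)(\psi D u)(e^{n}\cdot) = \psi(e^{n}\cdot)\,\zeta_{-n}(e^{n}\cdot)D(u(e^{n}\cdot))\cdot e^{-n}$; the factor $\psi(e^{n}x) \sim e^{-n}$ on the localizer's support cancels the $e^{-n}$ from the chain rule, giving a bounded map. Commutators between $D$ and $\zeta_{-n}$ produce terms of the form $v_n$-type expressions with $\zeta_{-n}$ replaced by $D\zeta_{-n}$, which by the first inequality of Lemma \ref{lemma 8.26.10}-style bounds inject into the same space with a uniform constant. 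The Sobolev embedding analogue of Lemma \ref{lemma 2}(i) follows from the standard $H^{\gamma}_p \hookrightarrow C^{\nu}$ embedding applied to each $v_n$, the factor $\psi^{i+\theta/p}(x)$ on derivatives becoming $e^{-n(i+\theta/p)}$ on the scale of the $n$-th shell, and then taking a supremum over $n$.

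\medskip

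\noindent For part (ii), the key observation is that the seminorm $|a|^{(0)}_{|\gamma|_+}$ is precisely designed to make the rescaled coefficients $a_n(x) := a(e^{n}x)$ uniformly bounded in $C^{|\gamma|_+}$ on the support of $\zeta_{-n}(e^{n}\cdot)$, since $\sup_{x}\psi^{k}(x)|D^{\beta}a(x)|$ translates under $x\mapsto e^n x$ exactly to $e^{nk}|D^{\beta}a(e^n x)|$, cancelling the derivative scaling factor. Therefore, applying the standard product bound on $\bR^d$ (of the type used to derive Lemma \ref{lemma 1.2.2}, but for the full-space Bessel-potential norm) to each localized piece $\zeta_{-n}(e^n\cdot)(au)(e^n\cdot)$ gives $\|\zeta_{-n}(e^n\cdot)(au)(e^n\cdot)\|_{H^{\gamma}_p} \lesssim |a|^{(0)}_{|\gamma|_+}\|v_n\|_{H^{\gamma}_p}$ uniformly in $n$; summing against $e^{n\theta}$ yields the asserted estimate.

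\medskip

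\noindent\textbf{Main obstacle.} The delicate point is controlling commutators between $D$ (or multiplication by $\psi$) and the localizers $\zeta_{-n}$: each application of $D$ to $\zeta_{-n}$ produces a factor of order $e^n$, which would be catastrophic except that it is always paired with a matching $\psi \sim e^{-n}$ by the placement of $\psi$ in the operator. Making this cancellation rigorous for non-integer $\gamma$ (where one cannot differentiate classically) requires reinterpreting the norm via Bessel potentials and exploiting the fact, recalled after (\ref{eqn 5.6.5}), that the definition of $H^{\gamma}_{p,\theta}(\cO)$ is independent of the choice of $\zeta$ and $\psi$ up to equivalent norms. Once the interior estimates are in place, a standard flattening argument using Assumption \ref{assumption domain} and the inhomogeneous multiplier bound from the half-space case completes the verification in a neighborhood of $\partial\cO$; in the interior the weight is bounded away from zero and the usual $H^\gamma_p$-theory applies directly. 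The full details can be found in \cite{Lo2}.
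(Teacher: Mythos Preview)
The paper does not prove this lemma at all; it is stated as a citation of \cite{Lo2} with no argument given. Your proposal therefore goes beyond what the paper does: you supply a sketch of the underlying ideas (dyadic localization via $\zeta_{-n}$, the $\psi\sim e^{-n}$ cancellation on each shell, commutator control, and the rescaling that makes $|a|^{(0)}_{|\gamma|_+}$ the natural multiplier norm), and then defer to \cite{Lo2} for details. That outline is consistent with the machinery developed in \cite{kr99} and \cite{Lo2}, so there is no mathematical objection; just be aware that you are not reproducing the paper's proof but rather summarizing the argument behind the cited reference.
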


Denote
$$
\bH^{\gamma}_{p,\theta}(\cO,T)=L_p(\Omega\times
[0,T],\cP,H^{\gamma}_{p,\theta}(\cO)), \quad
\bH^{\gamma}_{p,\theta}(\cO,T,\ell_2)=L_p(\Omega\times
[0,T],\cP,H^{\gamma}_{p,\theta}(\cO,\ell_2)),
$$
$$
U^{\gamma}_{p,\theta}(\cO)=
\psi^{1-2/p}L_{p}(\Omega,\cF_0,H^{\gamma-2/p}_{p,\theta}(\cO)),
\quad \bL_{p,\theta}(\cO,T)=\bH^{0}_{p,\theta}(\cO,T).
$$
\begin{definition}
We say $ u\in \frH^{\gamma+2}_{p,\theta}(\cO,T)$ if $u=(u^1,\cdots,
u^{d_1})\in \psi\bH^{\gamma+2}_{p,\theta}(\cO,T)$, $u(0,\cdot) \in
U^{\gamma+2}_{p,\theta}(\cO)$ and  for some $f \in
\psi^{-1}\bH^{\gamma}_{p,\theta}(\cO,T)$, $g\in
\bH^{\gamma+1}_{p,\theta}(\cO,T,\ell_2)$,
$$
du= f \,dt + g_m \, dw^m_t,
$$
in the sense of distributions. The norm in  $
\frH^{\gamma+2}_{p,\theta}(\cO,T)$ is defined by
$$
\|u\|_{\frH^{\gamma+2}_{p,\theta}(\cO,T)}=
\|\psi^{-1}u\|_{\bH^{\gamma+2}_{p,\theta}(\cO,T)} + \|\psi
f\|_{\bH^{\gamma}_{p,\theta}(\cO,T)}  +
\|g\|_{\bH^{\gamma+1}_{p,\theta}(\cO,T)} +
\|u(0,\cdot)\|_{U^{\gamma+2}_{p,\theta}(\cO)}.
$$

\end{definition}

The following result  is due to N.V.\:Krylov (see \cite{Kr01} and
\cite{Kim04-1}).

\begin{lemma}
                             \label{lemma 15.05}
 Let $p\geq 2$. The space $\frH^{\gamma+2}_{p,\theta}(T)$ is a Banach space and  there exists a constant
$c=c(d,p,\theta,\gamma,T)$ such that
$$
\bE \sup_{t\leq T}\|u(t)\|^p_{H^{\gamma+1}_{p,\theta}(\cO)}\leq c
\|u\|^p_{\frH^{\gamma+2}_{p,\theta}(\cO,T)}.
$$
In particular, for any $t\leq T$,
$$
\|u\|^p_{\bH^{\gamma+1}_{p,\theta}(\cO,t)}\leq c \int^t_0
\|u\|^p_{\frH^{\gamma+2}_{p,\theta}(\cO,s)}ds.
$$
\end{lemma}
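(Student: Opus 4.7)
The plan is to reduce the statement to the corresponding maximal inequality on $\bR^d$ by exploiting the dyadic partition $\{\zeta_{-n}(e^n\cdot)\}_{n\in\bZ}$ that defines the norm on $H^{\gamma+1}_{p,\theta}(\cO)$. Completeness of $\frH^{\gamma+2}_{p,\theta}(\cO,T)$ is routine: each of the four summands in its norm is a norm on a Banach space, a Cauchy sequence converges componentwise, and the defining relation $du=f\,dt+g_m\,dw^m_t$ is preserved under $L_p$-type limits in the sense of distributions.

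For the maximal inequality, fix $u\in\frH^{\gamma+2}_{p,\theta}(\cO,T)$ and for each $n\in\bZ$ define
$$
v_n(t,x):=\zeta_{-n}(e^n x)\,u(t,e^n x),\qquad \bar v_n(t,x):=\zeta_{-n}(e^n x)\,u(e^{2n}t,e^n x),
$$
extended by zero outside $\mathrm{supp}\,\zeta_{-n}(e^n\cdot)\subset\bR^d$. A direct It\^o and chain-rule computation, together with the Brownian scaling $\widetilde w^m_t:=e^{-n}w^m_{e^{2n}t}$, shows that $\bar v_n$ satisfies an SPDE on $\bR^d\times[0,e^{-2n}T]$ whose right-hand side consists of (i) the rescaled $f$- and $g$-terms multiplied by $\zeta_{-n}(e^n\cdot)$, and (ii) commutator terms involving one or two derivatives of $\zeta_{-n}(e^n\cdot)$ acting on the rescaled $u$. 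I would then apply the $\bR^{d_1}$-valued analogue of the maximal estimate for the Cauchy problem on $\bR^d$ (Theorem 7.1 of \cite{Kr99}) to $\bar v_n$, obtaining, with a constant $c$ independent of $n$,
$$
\bE\sup_{t\leq T}\|v_n(t)\|^p_{H^{\gamma+1}_p}=\bE\sup_{t\leq e^{-2n}T}\|\bar v_n(t)\|^p_{H^{\gamma+1}_p}\leq c\,\|\bar v_n\|^p_{\cH^{\gamma+2}_p(e^{-2n}T)}.
$$

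Now multiply by $e^{n\theta}$ and sum over $n\in\bZ$. Since the summands are nonnegative and $\sup_t\sum_n(\cdot)\leq\sum_n\sup_t(\cdot)$, the identity $\|u(t)\|^p_{H^{\gamma+1}_{p,\theta}(\cO)}=\sum_n e^{n\theta}\|v_n(t)\|^p_{H^{\gamma+1}_p}$ yields
$$
\bE\sup_{t\leq T}\|u(t)\|^p_{H^{\gamma+1}_{p,\theta}(\cO)}\leq\sum_{n\in\bZ}e^{n\theta}\bE\sup_{t\leq T}\|v_n(t)\|^p_{H^{\gamma+1}_p}.
$$
Combining this with the $\bR^d$-maximal estimate term-by-term, the right-hand side rearranges into $\|u\|^p_{\frH^{\gamma+2}_{p,\theta}(\cO,T)}$ after invoking (\ref{eqn 5.6.1}) and Lemma \ref{lemma 1}(i); the commutator contributions are absorbed into $\|\psi^{-1}u\|^p_{\bH^{\gamma+2}_{p,\theta}(\cO,T)}$. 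The second (integral) inequality follows at once from the first by integrating in $t$ and using $\|u(s)\|^p\leq\sup_{r\leq s}\|u(r)\|^p$.

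The chief obstacle is the commutator bookkeeping: the derivatives $(\zeta_{-n}(e^n\cdot))_x$ and $(\zeta_{-n}(e^n\cdot))_{xx}$ carry factors $e^n$ and $e^{2n}$ that must be exactly balanced by the weights $\psi$ implicit in $\|\psi^{-1}u\|_{\bH^{\gamma+2}_{p,\theta}(\cO,T)}$. This balance is precisely the scale-invariant structure of the Krylov weighted spaces encoded in (\ref{eqn 5.6.1}), but verifying it term by term (especially for non-integer $\gamma$, where one cannot differentiate pointwise and must rely on the Bessel-potential formulation) is the nontrivial part of the argument.
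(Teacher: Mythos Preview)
The paper does not prove this lemma; it simply cites Krylov \cite{Kr01} and Kim \cite{Kim04-1}. Your localization-and-sum strategy is indeed the method used in those references, so the overall architecture is right. Two points, however, deserve correction.

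First, there are no commutator terms. The defining relation $du=f\,dt+g_m\,dw^m_t$ in the space $\frH^{\gamma+2}_{p,\theta}(\cO,T)$ contains no spatial derivatives of $u$; multiplying by $\zeta_{-n}(e^n\cdot)$ and rescaling simply gives $d\bar v_n=\bar F_n\,dt+\bar G_{n,m}\,d\tilde w^m_t$ with $\bar F_n$, $\bar G_n$ equal to the corresponding cut-off and rescaled $f$, $g$. You have conflated the abstract decomposition $du=f\,dt+g\,dw$ with a concrete SPDE of the form $du=(a^{ij}u_{x^ix^j}+\cdots)\,dt+\cdots$; the former is what the $\frH$-space is built on, and it produces no commutators.

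Second, and more seriously, the step ``with a constant $c$ independent of $n$'' hides the real work, and your identification of the difficulty is off. After summing $e^{n\theta}$ times the whole-space estimate, the $f$-contribution is $\sum_n e^{n\theta}\|\zeta_{-n}(e^n\cdot)f(\cdot,e^n\cdot)\|^p_{\bH^{\gamma}_p(T)}=\|f\|^p_{\bH^{\gamma}_{p,\theta}(\cO,T)}$, whereas the $\frH$-norm contains only $\|\psi f\|^p_{\bH^{\gamma}_{p,\theta}(\cO,T)}$, which can be strictly smaller (indeed $f$ need not lie in $\bH^{\gamma}_{p,\theta}(\cO,T)$ at all). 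The missing power of $\psi$ has to come from somewhere. In \cite{Kr01} it comes from using a sharper form of the whole-space trace estimate in which the $f$-term appears paired with the $u$-term, e.g.\ for $p=2$ one has $\bE\sup_t\|v_n\|^2_{H^{\gamma+1}}\le c\big(\|v_n\|_{\bH^{\gamma+2}(T)}\|F_n\|_{\bH^{\gamma}(T)}+\|G_n\|^2_{\bH^{\gamma+1}(T,\ell_2)}+\|v_n(0)\|^2\big)$, with $c$ independent of $T$; then Cauchy--Schwarz in $n$ splits the weight as $e^{n\theta}=e^{n(\theta-p)/2}e^{n(\theta+p)/2}$, producing exactly $\|\psi^{-1}u\|$ and $\|\psi f\|$. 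If instead you insist on time rescaling, you must track the explicit $T$-powers in the whole-space constant (they are $T^{p-1}$ and $T^{p/2-1}$ in Theorem~7.2 of \cite{Kr99}) and check that they convert into the right powers of $e^n$; your sketch asserts $n$-independence without doing this.
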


\begin{assumption}
             \label{assumption regularity}
(i) The functions $a^{ij}_{kr}(t,\cdot),\sigma^{i}_{kr}(t,\cdot)$
 are {\bf{point-wise continuous}} in $\cO$.
  That is, for any $\varepsilon >0$ and
$x\in \cO$, there exists $\delta=\delta(\varepsilon,x)$ such that
$$
|a^{ij}_{kr}(t,x)-a^{ij}_{kr}(t,y)|
+|\sigma^{i}_{kr}(t,x)-\sigma^i_{kr}(t,y)|_{\ell_2}<\varepsilon
$$
whenever $x,y\in \cO$ and $|x-y|<\delta$.

(ii) There is  a control on the behavior of $a^{ij}_{kr}$,
$b^i_{kr}$, $c_{kr}$, $\sigma^{i}_{kr}$
 and $\nu_{kr}$ near
$\partial \cO$, namely,
\begin{equation}
                                                \label{12.10.1}
\lim_{\substack{\rho(x)\to 0\\
x\in \cO}}\sup_{\substack{y\in \cO \\|x-y|\leq\rho(x,y)}} \sup_{t,
\omega} [|a^{ij}_{kr}(t,x)-a^{ij}_{kr}(t,y) |+
|\sigma^{i}_{kr}(t,x)-\sigma^{i}_{kr}(t,x)|_{\ell_2}] =0.
\end{equation}
\begin{equation}
                                                       \label{05.04.01}
\lim_{\substack{\rho(x)\to0\\
x\in \cO}} \sup_{t,
\omega}[\rho(x)|b^i_{kr}(t,x)|+\rho^{2}(x)|c_{kr}(t,x)|+
\rho(x)|\nu_{kr}(t,x)|_{\ell_2}]=0.
\end{equation}

(iii) For any $t>0$ and $\omega\in \Omega$,
$$
|a^{ij}_{kr}(t,\cdot)|^{(0)}_{|\gamma|_+}
+|b^{i}_{kr}(t,\cdot)|^{(1)}_{|\gamma|_+}
+|c_{kr}(t,\cdot)|^{(2)}_{|\gamma|_+} +
|\sigma^{i}_{kr}(t,\cdot)|^{(0)}_{|\gamma+1|_+} +
|\nu_{kr}(t,\cdot)|^{(1)}_{|\gamma+1|_+} \leq L.
$$
\end{assumption}

\begin{remark}
          \label{05.18.01}
(i). The condition (\ref{12.10.1}) is equivalent to
\begin{eqnarray}
\lim_{\rho(x)\to 0}\sup_{\omega,t}
\left(osc(a^{ij}_{kr})_{B_{\frac{\rho(x)}{2}}(x)}+osc(\sigma^i_{kr})_{B_{\frac{\rho(x)}{2}}(x)}\right)=0.\nonumber
\end{eqnarray}

(ii).  It is easy to see
 that (\ref{12.10.1}) is much weaker than uniform continuity condition.
   For instance,
if $\delta\in(0,1)$, $d=d_1=1$, and $\cO=\bR_{+}$, then the function
$a(x)$ equal to $2+\sin (|\ln x|^{\delta})$ for $0<x\leq1/2$
satisfies (\ref{12.10.1}). Indeed, if $x,y>0$ and $ |x-y|\leq
x\wedge y $, then
 $$
|a(x)-a(y)|=|x-y||a'(\xi)|,
$$
where $\xi$ lies between $x$ and $y$. In addition, $|x-y|\leq
x\wedge y \leq\xi\leq2(x\wedge y)$,  and $ \xi|a'(\xi)|\leq
|\ln[2(x\wedge y)]|^{\delta-1}\to0$ as $x\wedge y\to0$.

(iii). We observe that (\ref{05.04.01}) allows the coefficients
$b^i_{kr}, c_{kr}$ and $\nu_{kr}$ to blow up near the boundary at a
certain rate. For instance, it holds if
\begin{eqnarray}
|b^i_{kr}|\le N \rho^{-1+\varepsilon},\quad |c_{kr}|\le N
\rho^{-2+\varepsilon},\quad |\nu_{kr}|_{\ell_2}\le N
\rho^{-1+\varepsilon}\nonumber
\end{eqnarray}
for some constants $N$, $\varepsilon>0$.
\end{remark}

Here is the main result of this article.

\begin{theorem}
                    \label{main theorem on domain}
Let Assumptions \ref{main assumptions}, \ref{assump 9.17} and
\ref{assumption regularity} hold. Then for any $f\in
\psi^{-1}\bH^{\gamma}_{2,\theta}(\cO,T),\;g\in
\bH^{\gamma+1}_{2,\theta}(\cO,T,\ell_2),\; u_0\in
U^{\gamma+2}_{2,\theta}(\cO)$,  the problem (\ref{eqn main system})
on $\Omega\times[0,T]\times\cO$ admits a unique solution
$u=(u^1,\cdots, u^{d_1})\in \frH^{\gamma+2}_{2,\theta}(\cO,T)$, and
for this solution
\begin{equation}
                        \label{a priori}
\|\psi^{-1}u\|_{\bH^{\gamma+2}_{2,\theta}(\cO,T)}\leq c\left(\|\psi
f\|_{\bH^{\gamma}_{2,\theta}(\cO,T)}
+\|g\|_{\bH^{\gamma+1}_{2,\theta}(\cO,T,\ell_2)}
+\|u_0\|_{U^{\gamma+2}_{2,\theta}(\cO)}\right),
\end{equation}
where $c=c(d,d_1,\delta,\theta,K,L,T)$.
\end{theorem}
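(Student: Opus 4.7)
As in the proofs of Theorem \ref{thm 2} and Theorem \ref{theorem half}, the plan is to invoke the method of continuity, so it suffices to establish the a priori estimate \eqref{a priori} assuming a solution $u\in\frH^{\gamma+2}_{2,\theta}(\cO,T)$ already exists, and then to handle uniqueness separately. The overall strategy is to localize the problem with a partition of unity adapted to the geometry of $\cO$: near the boundary we straighten $\partial\cO$ with the $C^1_u$-diffeomorphism $\Psi$ from Assumption \ref{assumption domain} and reduce to Theorem \ref{theorem half} on $\bR^d_+$; in the interior we cover by compactly supported cutoffs and reduce to the Cauchy-problem result Theorem \ref{thm 2}.

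For the boundary localization, fix a $\kappa_0=\kappa_0(d,d_1,\theta,\delta,\gamma,K,L)$ as in Theorem \ref{theorem half}. By Assumption \ref{assumption regularity}(ii), i.e.\ \eqref{12.10.1} and \eqref{05.04.01}, one can choose a boundary-strip $\cO\cap\{\rho(x)<r_1\}$ on which, pulled back to $\bR^d_+$ via $\Psi$, the transformed coefficients satisfy the smallness condition \eqref{eqn 9.16.1} with $\kappa\le\kappa_0$; this is where Assumption \ref{assumption regularity}(i)--(ii) is truly used. Cover $\partial\cO$ by finitely many $\Psi$-charts, choose nonnegative $\zeta_n\in C_0^\infty(\cO)$ subordinate to this cover satisfying the partition-of-unity inequalities of Lemma \ref{lemma 11.14.1}, pulled back to $\cO$ via $\Psi$ and made into an analogous family indexed by the sub-intervals of $\psi$-dyadic annuli. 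Applied to $\eta_n u$, the system \eqref{eqn main system} on $\cO$ pulls back under $\Psi$ to a system on $\bR^d_+$ whose leading coefficients come from $A^{ij},\Sigma^i$ composed with $\Psi^{-1}$ and conjugated by $\Psi_x$; because $\Psi\in C^1_u$ and $\Psi_x$ is uniformly continuous, the algebraic condition \eqref{assumption 1} is preserved (with possibly a new $\delta$ and $K^j$), and Assumption \ref{assump 9.17} transfers through the chart. Theorem \ref{theorem half} then yields the estimate \eqref{eqn 9.9.2} for each localized piece, and we pull it back to $\cO$ using Lemma \ref{lemma 1}(i) together with the equivalence $\psi\sim\rho$ from Lemma \ref{lemma 10.3.1}.

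For the interior localization, the cutoffs $\eta_n$ with $\mathrm{supp}\,\eta_n\Subset\cO$ reduce the problem for $\eta_n u$ to a Cauchy problem on $\bR^d$ with smoothly extended coefficients; since $\psi$ is bounded above and below on the compact supports, the weighted norm on the $\eta_n u$-piece is equivalent to the unweighted $\bH^{\gamma+2}_{2}(T)$-norm, and Theorem \ref{thm 2} delivers the required bound with the constant depending only on the interior bounds of the coefficients. The commutators $[\text{leading operator},\eta_n]$ produce lower-order error terms controlled by $\|\psi^{-1}u\|_{\bH^{\gamma+1}_{2,\theta}(\cO,T)}$, which are absorbed via the interpolation inequality used in Case 3 of Theorem \ref{theorem half}; the smallness coming from the partition of unity makes the absorption legitimate.

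Finally one glues using the analog of Lemma \ref{lemma 8.26.10}(ii) on $\cO$ (which follows by the same chart argument), and the time-integrated form of Lemma \ref{lemma 15.05} (Gronwall on $t$) if one needs to move an absorbed lower-order term to the right-hand side. Uniqueness follows by applying the a priori estimate to the difference of two solutions with zero data, exactly as in Case 4 of the proof of Theorem \ref{theorem half}. The main obstacle I anticipate is technical and concentrated in the second paragraph: ensuring that the pulled-back coefficients on $\bR^d_+$ genuinely satisfy the pointwise smallness \eqref{eqn 9.16.1} with constant $\le\kappa_0$ uniformly across the finitely many boundary charts, because $\Psi$ is only $C^1_u$ and so the transformed coefficients inherit no better regularity than the original ones — this is exactly why Assumption \ref{assumption regularity}(i)--(ii) is formulated in the point-wise / comparable-distance form \eqref{12.10.1}--\eqref{05.04.01} rather than as a uniform modulus of continuity.
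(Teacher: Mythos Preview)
Your overall architecture matches the paper's proof: method of continuity to reduce to an a priori estimate, boundary localization via the $C^1_u$ charts $\Psi$ and Theorem \ref{theorem half}, interior localization via Theorem \ref{thm 2}, and closure via Lemma \ref{lemma 15.05} plus Gronwall.

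Two details in your write-up diverge from the actual argument and would cause trouble if implemented literally. First, the partition of unity on $\cO$ is a \emph{finite} one: a single interior cutoff $\xi_{(0)}\in C^\infty_0(\cO)$ together with finitely many $\xi_{(i)}\in C^\infty_0(B_{\rho_0}(x_i))$, $x_i\in\partial\cO$. The dyadic family of Lemma \ref{lemma 11.14.1} is used only inside the proof of Theorem \ref{theorem half} on $\bR^d_+$; there is no analogue of it on $\cO$, and your phrase ``made into an analogous family indexed by the sub-intervals of $\psi$-dyadic annuli'' is not what happens. Second, and relatedly, the commutator terms coming from the finite cutoffs carry no smallness parameter, so they cannot be absorbed by interpolation as in Case 3 of Theorem \ref{theorem half}. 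Instead one simply accepts the lower-order remainder $\|u\|_{\bH^{\gamma+1}_{2,\theta}(\cO,t)}$ on the right side of the localized estimate, invokes the time-integrated bound of Lemma \ref{lemma 15.05}, and closes with Gronwall---which you do mention, but as a contingency rather than the mechanism. One further technical point the paper handles explicitly and you leave implicit: after pulling back through $\Psi$, the transformed coefficients are cut off and blended with $\delta^{ij}I$ via $\eta\varphi_n$ so that they are globally defined on $\bR^d_+$ and satisfy \eqref{eqn 9.11.1}--\eqref{eqn 9.16.1} with a uniform $\kappa_0$; the choice of $n$ in $\varphi_n$ and of $\rho_0$ is where \eqref{12.10.1}--\eqref{05.04.01} and the property $\rho(x)\Psi_{xx}\to 0$ are actually spent.
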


\begin{remark}
By inspecting the proofs carefully, one can check that
 the above theorem hold true even if $\cO$ is not bounded.

\end{remark}

\begin{proof}

Since the theorem was already proved for single equations
(\cite{Kim03}), as in the proof of Theorem \ref{thm 1}, we only need
to  establish the a priori estimate (\ref{a priori}) assuming that a
solution $u\in \frH^{\gamma+2}_{2,\theta}(\cO,T)$ already exists. As
usual, we assume  $u_0=0$.

 Let $x_0 \in \partial \cO$ and $\Psi$ be a function from
Assumption \ref{assumption domain}. In \cite{KK2} it is shown that
 $\Psi$ can be chosen in such a way that  for any non-negative integer $n$
\begin{equation}
                                                          \label{2.25.03}
|\Psi_{x}|^{(0)}_{n,B_{r_0}(x_0)\cap \cO} +
 |\Psi^{-1}_{x}|^{(0)}_{n,J_{+}} < N(n)<  \infty
\end{equation}
and
\begin{equation}
                                                     \label{2.25.02}
\rho(x)\Psi_{xx}(x) \to 0 \quad \text{as}\quad  x\in
B_{r_0}(x_0)\cap \cO,
 \text{and} \,\,\,  \rho(x) \to 0,
\end{equation}
where the constants $N(n)$ and the
 convergence in (\ref{2.25.02}) are independent of  $x_0$.

 Define $r=r_{0}/K_{0}$
and fix smooth functions $\eta \in C^{\infty}_{0}(B_1(0) ),
\varphi\in C^{\infty}(\bR)$ such that $ 0 \leq \eta, \varphi \leq
1$, and
 $\eta=1$ in $B_{r/2}(0) $,  $\varphi(t)=1$ for $t\leq -3$, and
$\varphi(t)=0$
 for $t\geq-1$ and  $0\geq\varphi'\geq-1$. Observe that
$\Psi(B_{r_0}(x_0))$ contains $B_r(0) $.
 For $n=1,2,... $, $t>0$ and $x\in\bR^{d}_{+}$ we introduce
$\varphi_{n}(x)=\varphi(n^{-1}\ln x^1)$,
$$
a^{ij,n}=(a^{ij,n}_{kr}):= \tilde{a}^{ij}  \eta(x)\varphi_{n} +
\delta^{ij}(1- \eta \varphi_{n})I, \quad b^{i,n}=(b^{i,n}_{kr})
:=\tilde{b}^i \eta \varphi_{n},\quad c^{n}=(c^n_{kr}) :=\tilde{c}
\eta \varphi_{n},
$$
$$
\sigma^{i,n}=(\sigma^{i,n}_{kr}):=\tilde{\sigma}\eta\varphi_{n},\quad
\nu^n=(\nu^{n}_{kr}):=\tilde{\nu}\eta \varphi_{n},
$$
where
$$
\tilde{a}^{ij}(t,x)=\bar{a}^{ij}(t,\Psi^{-1}(x)), \quad
\tilde{b}^{i}(t,x)=\bar{b}^{i}(t,\Psi^{-1}(x)),
$$
$$
\tilde{\sigma}^{i}(t,x)=\bar{\sigma}^{i}(t,\Psi^{-1}(x)),
 \quad \bar{a}^{ij}=\sum_{s,t=1}^d a^{st}\Psi^{i}_{x^{s}}\Psi^{j}_{x^{t}},
$$
$$
\quad\bar{b}^{i}=\sum_{s,t}a^{st}\Psi^{i}_{x^{s}x^{t}}
+\sum_{\ell}b^{\ell}\Psi^{i}_{x^{\ell}},
\quad\bar{\sigma}^{i}=\sum_{s}\sigma^{s}\Psi^{i}_{x^s},
$$
$$
\tilde{c}(t,x)=c(t,\Psi^{-1}(x)),\quad
\tilde{\nu}(t,x)=\nu(t,\Psi^{-1}(x)).
$$
Using  Lemma 3.4 of \cite{KK2}, one can easily
  check that there is a constant $L'$ independent of $n$ and $x_0$
  so that
\begin{equation}
                     \label{eqn 9.21.1}
|a^{ij,n}(t,\cdot)|^{(0)}_{|\gamma|_+}
+|b^{i,n}(t,\cdot)|^{(1)}_{|\gamma|_+}
+|c^n(t,\cdot)|^{(2)}_{|\gamma|_+} +
|\sigma^{i,n}_{kr}(t,\cdot)|^{(0)}_{|\gamma+1|_+} +
|\nu^{n}_{kr}(t,\cdot)|^{(1)}_{|\gamma+1|_+} \leq L'.
\end{equation}
Take  $\kappa_0$ from Theorem \ref{theorem half} corresponding to
$d,\theta,\delta, K, \gamma$ and $L'$.
 Observe that $\varphi(m^{-1} \ln x^1)=0$ for $x^1 \geq e^{-m}$ and
 $|\varphi(m^{-1}\ln x^1) - \varphi(m^{-1} \ln y^1)| \leq m^{-1}$
 if $|x^1-y^1|\leq x^1 \wedge y^1$.
 Also we easily see that
 (\ref{2.25.02}) implies $x^{1}\Psi_{xx}(\Psi^{-1}(x)) \to 0$ as $x^1\to 0$.
  Using these facts, (\ref{12.10.1}) and (\ref{05.04.01}),
 one can find and fix
 $n>0$ independent of $x_0$ such that
$$
|a^{ij,n}_{kr}(t,x)-a^{ij,n}_{kr}(t,y)| + |\sigma^{i,n}_{kr}(t,x)
-\sigma^{i,n}_{kr}(t,y)|_{\ell_2} + x^1 |b^{i,n}_{kr}(t,x)|
$$
\begin{equation}
             \label{eqn 9.21.2}
+ (x^1)^2|c^{n}_{kr}(t,x)|+ x^1|\nu^n_{kr}(t,x)|_{\ell_2} \leq
\kappa_0,
\end{equation}
whenever $t>0, x,y\in \bR^d_+$ and $|x-y|\leq x^1 \wedge y^1$.
  Now we fix  a $\rho_0  <r_0  $ such that
\begin{equation}
                    \label{eqn 8.21.4}
\Psi(B_{\rho_0}(x_0)) \subset B_{r/2}(0) \cap \{x:x^1 \leq
e^{-3n}\}.
\end{equation}
 Let $\xi$ be a smooth function
with support in $B_{\rho_0}(x_0)$ and denote $v:=(\xi u)(\Psi^{-1})$
and continue $v$ as zero in
$\bR^{d}_{+}\setminus\Psi(B_{\rho_0}(x_0))$. Since
$\eta\varphi_{n}=1$ on $\Psi(B_{\rho_0}(x_0))$, the function  $v$
satisfies
$$
d v^k = (a^{ij,n}_{kr}v^r_{x^i x^j} + b^{i,n}_{kr}v^r_{x^i} +
c^n_{kr}v + \hat{f}^k) \,dt + (\sigma^{i,n}_{kr,m}v^r_{x^i} +
\nu^{n}_{kr,m}v + \hat{g}^k_m)\, dw^m_t,
$$
where
$$\hat{f}^k =\tilde{f}(\Psi^{-1}), \quad \tilde{f}=
-2a^{ij}_{kr}u^r_{x^{i}}\xi_{x^{j}}
-a^{ij}_{kr}u^r\xi_{x^{i}x^{j}}-b^{i}_{kr}u^r\xi_{x^{i}} +\xi f^k,
$$
$$
\hat{g}=\tilde{g}(\Psi^{-1}), \quad \tilde{g}^k=-\sigma^{i}_{kr}u^r
\xi_{x^i}+\xi g^k.
$$
Next, we observe  that by Lemma \ref{lemma 10.3.1} and
  Theorem 3.2 in \cite{Lo2} (or see \cite{KK2})
for any $\nu,\alpha \in \bR $ and $h \in
\psi^{-\alpha}H^{\nu}_{p,\theta}(\cO)$ with support in
$B_{\rho_0}(x_0)$ we have
\begin{equation}
                                                          \label{1.28.01}
\|\psi^{\alpha}h\|_{H^{\nu}_{p,\theta}(\cO)} \sim
\|M^{\alpha}h(\Psi^{-1})\|_{H^{\nu}_{p,\theta}}.
\end{equation}
Therefore, we  conclude that
 $v\in \frH^{\gamma+2}_{2,\theta}(T)$.
Hence, by Theorem \ref{theorem half}  we get for any $t\leq T$
$$
\|M^{-1}v\|_{\bH^{\gamma+2}_{2,\theta}(t)} \leq N \left(\|M\hat{f}
\|_{\bH^{\gamma}_{2,\theta}(t)} +
\|\hat{g}\|_{\bH^{\gamma+1}_{2,\theta}(t,\ell_2)} +
 \|u_0(\Psi^{-1}) \zeta( \Psi^{-1})\|_{U^{\gamma+2}_{2,\theta}}\right).
$$
By using (\ref{1.28.01}) again, we obtain
\begin{eqnarray}
&&\|\psi^{-1}u\zeta\|_{\bH^{\gamma+2}_{2,\theta}(\cO,t)}\nonumber\\
 &\leq& N
\|a\xi_x \psi u_x\|_{\bH^{\gamma}_{2,\theta}(\cO,t)} + N
\|a\xi_{xx}\psi u\|_{\bH^{\gamma}_{2,\theta}(\cO,t)} + N \|\xi_x
\psi b u\|_{\bH^{\gamma}_{2,\theta}(\cO,t)}\nonumber\\
&&+ N \|\sigma \xi_x u\|_{\bH^{\gamma+1}_{2,\theta}(\cO,t)} +  N
\|\xi \psi f\|_{\bH^{\gamma}_{2,\theta}(\cO,t)} + \|\xi
g\|_{\bH^{\gamma+1}_{2,\theta}(\cO,t,\ell_2)} +\|\xi u_0
\|_{U^{\gamma+2}_{2,\theta}(\cO)}.\nonumber
\end{eqnarray}
Remembering   that
 $\rho$ and $\psi$ are comparable in $\cO$, one can easily check
   that the functions
$$
|\xi_x a(t,\cdot)|^{(0)}_{|\gamma|_+},\,\, |\xi_{xx}\psi
a(t,\cdot)|^{(0)}_{|\gamma|_+},\,\, |\xi_x \psi
b(t,\cdot)|^{(0)}_{|\gamma|_+},\,\, | \xi_x
\sigma(t,\cdot)|^{(0)}_{|\gamma+1|_+}
$$
are bounded on $\Omega \times [0,T]$. Then one   concludes

\begin{eqnarray}
&&\|\psi^{-1}u\xi\|_{\bH^{\gamma+2}_{2,\theta}(\cO,t)} \nonumber\\
&\leq& N \|\psi u_x\|_{\bH^{\gamma}_{2,\theta}(\cO,t)} + N
\|u\|_{\bH^{\gamma}_{2,\theta}(\cO,t)}+N \|\psi
f\|_{\bH^{\gamma}_{2,\theta}(\cO,t)} + \|
g\|_{\bH^{\gamma+1}_{2,\theta}(\cO,t,\ell_2)} +N
\|u_0\|_{U^{\gamma+2}_{2,\theta}(\cO)}.\nonumber
\end{eqnarray}

Note that the above constants $\rho_0, m, L', N$  are independent of
$x_0$.  Therefore, to estimate the norm
 $\|\psi^{-1} u\|_{\bH^{\gamma+2}_{2,\theta}(\cO,t)}$,
 one introduces a partition of unity $\xi_{(i)}, i=0,1,2,...,N$ such
that $\xi_{(0)} \in C^{\infty}_0(\cO)$ and
  $\xi_{(i)} \in C^{\infty}_0(B_{\rho_0}(x_i))$,
$ x_i \in \partial \cO$ for $i\geq1$.
 Then one estimates
$\|\psi^{-1} u\xi_{(0)}\|_{\bH^{\gamma+2}_{2,\theta}(\cO,t)}$ using
Theorem \ref{thm 2} and the other norms as above.  We only mention
that since $\psi^{-1} u\xi_{(0)}$ has compact support in $\cO$,
$$
\|\psi^{-1} u\xi_{(0)}\|_{\bH^{\gamma+2}_{2,\theta}(\cO,t)}\sim \|
u\xi_{(0)}\|_{\bH^{\gamma+2}_{2,\theta}(\cO,t)}\sim \|
u\xi_{(0)}\|_{\bH^{\gamma+2}_{2}(\bR^d,t)}.
$$
By summing up those estimates one gets
\begin{eqnarray}
&&\|\psi^{-1} u\|_{\bH^{\gamma+2}_{2,\theta}(\cO,t)}\nonumber\\
 &\leq& N
 \|\psi u_x\|_{\bH^{\gamma}_{2,\theta}(\cO,t)}+
N\|u\|_{\bH^{\gamma}_{2,\theta}(\cO,t)}+ N \|\psi
f\|_{\bH^{\gamma}_{2,\theta}(\cO,t)} + N
\|g\|_{\bH^{\gamma+1}_{p,\theta}(\cO,t,\ell_2)} + N
\|u_0\|_{U^{\gamma+2}_{2,\theta}(\cO)}.\nonumber
\end{eqnarray}
By this  and the inequality
$$
\|\psi u_x\|_{ H^{\gamma}_{2,\theta}(\cO)} \leq N \|u\|_{
H^{\gamma+1}_{2,\theta}(\cO)},
$$
  we get for each $t\leq T$,
\begin{equation}
                        \label{eqn 9.21.6}
\|u\|^2_{\frH^{\gamma+2}_{2,\theta}(\cO,t)} \leq N
\|u\|^2_{\bH^{\gamma+1}_{2,\theta}(\cO,t)} + N \left(\|\psi
f\|^2_{\bH^{\gamma}_{2,\theta}(\cO,T)} +
\|g\|^2_{\bH^{\gamma+1}_{2,\theta}(\cO,T,\ell_2)} +
\|u_0\|^2_{U^{\gamma+2}_{2,\theta}(\cO)}\right).
\end{equation}
 Now the a priori estimate  follows from  Lemma \ref{lemma 15.05}
 and Gronwall's inequality.
 The theorem is proved.
\end{proof}


\begin{thebibliography}{mm}
{\small

\bibitem{F}  Flandoli, F. (1990). {\em Dirichlet boundary value problem for stochastic
parabolic equations: compatibility relation and regularity of
solutions},
 Stochastics Stochastics Rep. {\bf{29}}, no. 3, 331-357.

\bibitem{Fu} Funaki, T. (1983). {\em Random motion of strings and related stochastic
evolution equations}, Nagoya Math. J. {\bf{89}}, 129--193.


\bibitem{GH} Gilbarg, D., \: H\"ormander, L. (1980).
{\em Intermediate Schauder estimates\/}, Archive Rational Mech.
Anal., {\bf{74}}, no. 4, 297-318.

\bibitem{GT} Gilbarg, D., \: Trudinger, N.S. (1983).
{\em Elliptic partial differential equations of second order}, 2d
ed., Springer Verlag,  Berlin.

\bibitem{Kim04-1} Kim, K. (2004). {\em $L_q(L_p)$ theory and H\"older estimates for
parabolic SPDE\/},  Stochastic processes and their applications,
{\bf{114}}, no. 2, 313-330.

\bibitem{Kim03} Kim, K. (2004). {\em On stochastic partial
differential equations with variable coefficients in $C^1$
domains\/},
  Stochastic processes and their applications, {\bf{112}}, no. 2, 261-283.

\bibitem{KK} Kim, K.,\: Krylov, N.V. (2004). {\em On SPDEs with variable
coefficients in one space dimension\/},  Potential Anal, {\bf{21}},
no. 3, 203-239.

\bibitem{KK2} Kim, K., \:Krylov, N.V. (2004).  {\em On the Sobolev space theory
of parabolic and elliptic equations in $C^{1}$ domains\/}, SIAM J.
Math. Anal. {\bf{36}}, 618-642.



\bibitem{Kr08}  Krylov, N.V. (2008). {\em Lectures on Elliptic and Parabolic Equations in Sobolev Spaces\/},
American Mathematical Society, Prividence, RI.


\bibitem{Kr01} Krylov, N.V. (2001). {\em Some properties of traces for stochastic
and determistic parabolic weighted Sobolev spaces}, Journal of
Functional Analysis {\bf{183}}, 1-41.

\bibitem{kr99-1} Krylov, N.V. (1999). {\em Some properties of weighted
Sobolev space in $\bR^d_+$\/}, Ann. Scuola Norm. Sup. Pisa Cl. Sci.
{\bf{4}}, no.28, 675-693.

\bibitem{Kr99}   Krylov, N.V. (1999). {\em An analytic approach to SPDEs\/},
 pp. 185-242 in
Stochastic Partial Differential Equations: Six Perspectives,
Mathematical Surveys and Monographs,  {\bf{64}}, AMS, Providence,
RI.

\bibitem{kr99} Krylov, N.V. (1999). {\em Weighted Sobolev spaces and Laplace equations
and the heat equations in a half space\/}, Comm. in PDEs, {\bf{23}},
no. 9-10, 1611-1653.

\bibitem{kr94} Krylov, N.V. (1994).  {\em A $W^n_2$-theory of the Dirichlet
problem for SPDEs in general smooth domains\/}, Probab.Theory
Relat.Fields {\bf{98}}, 389-421.

\bibitem{KL1}  Krylov, N.V.,\: Lototsky,S.V.  (1999).
  {\em A Sobolev space theory of SPDEs
with constant coefficients on a half line\/},
 SIAM J.   Math. Anal., {\bf{30}}, no. 2, 298-325.

\bibitem{KL2} Krylov, N.V.,\: Lototsky, S.V.  (1999). {\em A Sobolev space
theory of SPDEs with constant coefficients in a half space\/}, SIAM
J. on Math. Anal., {\bf{31}}, no. 1, 19-33.

\bibitem{La} Lapic, S.K. (1994). {\em On the first-initial boundary
value problem for stochastic partial differential equations\/},
Ph.D. thesis, University of Minnesota, Minneapolis, MN.






\bibitem{Lo} Lototsky, S.V.  (1999). {\em Dirichlet problem for stochastic
parabolic equations in smooth domains\/}, Stochastics and
Stochastics Reports, {\bf{68}}, no. 1-2,  145-175.

\bibitem{Lo2} Lototsky, S.V.  (2000).
{\em Sobolev spaces with weights in domains and boundary value
problems for degenerate elliptic equations\/}, Methods
 and Applications of
Analysis, {\bf{1}}, no. 1, 195-204.

\bibitem{MR04} Mikulevicius, R.,\:  Rozovskii, B. (2004). {\em Stochastic Navier-Stokes equations for turbulent flows\/}, SIAM J. Math. Anal.,
{\bf{35}} , no. 5, 1250-1310.

\bibitem{MR} Mikulevicius, R., \: Rozovskii, B. (2001). {\em A note on
Krylov's $L_p$-theory for systems of SPDEs\/}, Electron. J. Probab.,
{\bf{6}}, no. 12, 1-35.



\bibitem{MT} Mueller, C.,\: Tribe, R. (2002). {\em
Hitting properties of a random string\/}, Electron. J. Probab.
{\bf{7}}, no. 10, 1-29.


\bibitem{R} Rozovskii, B. (1990). {\em Stochastic evolution systems\/}, Kluwer,
 Dordrecht.


\bibitem{T} Triebel, H. (1983). {\em Theory of function spaces\/},
Birkh\"auser Verlag, Basel-Boston-Stuttgart.


}
\end{thebibliography}
\end{document}